\numberwithin{equation}{section}
\newtheorem{thm}{Theorem}[section]
\newtheorem{lem}[thm]{Lemma}
\newtheorem{prop}[thm]{Proposition}
\newtheorem{cor}[thm]{Corollary}
\newtheorem{dfn-lem}[thm]{Lemma-Definition}
\theoremstyle{definition}
\newtheorem{rem}[thm]{Remark}
\newtheorem{exam}[thm]{Example}
\newtheorem{exam-nota}[thm]{Example-Notation}
\newtheorem{nota}[thm]{Notation}
\theoremstyle{definition}
\newtheorem{dfn}[thm]{Definition}
\theoremstyle{definition}
\newtheorem{dfn-nota}[thm]{Definition-Notation}
\newcommand{\beqa}{\begin{eqnarray*}}
\newcommand{\eeqa}{\end{eqnarray*}}
\newcommand{\fe}{\mbox{${\mathfrak e}$}}
\newcommand{\fa}{\mbox{${\mathfrak a}$}}
\newcommand{\ft}{\mbox{${\mathfrak t}$}}
\newcommand{\fk}{\mbox{${\mathfrak k}$}}
\newcommand{\fg}{\mbox{${\mathfrak g}$}}
\newcommand{\fl}{\mbox{${\mathfrak l}$}}
\newcommand{\fs}{\mbox{${\mathfrak s}$}}
\newcommand{\fsl}{\mbox{${\fs\fl}$}}
\newcommand{\fh}{\mbox{${\mathfrak h}$}}
\newcommand{\fn}{\mbox{${\mathfrak n}$}}
\newcommand{\fp}{\mbox{${\mathfrak p}$}}
\newcommand{\fb}{\mbox{${\mathfrak b}$}}
\newcommand{\fz}{\mbox{${\mathfrak z}$}}
\newcommand{\fm}{\mbox{${\mathfrak m}$}}
\newcommand{\C}{\mbox{${\mathbb C}$}}
\newcommand{\R}{\mbox{${\mathbb R}$}}
\newcommand{\Ad}{{\rm Ad}}
\newcommand{\fgl}{\mathfrak{gl}}
\newcommand{\xifij}{\xi_{f_{i,j}}}
\newcommand{\dn}{{n\choose 2}}
\newcommand{\dnone}{{n+1\choose 2}}
\newcommand{\ad}{\operatorname{ad}}
\newcommand{\calQ}{\mathcal{Q}}
\newcommand{\nilrad}{\fn_{\calQ}}
\newcommand{\borel}{\fb_{\calQ}}
\newcommand{\orbittower}{X_{\calQ}}
\newcommand{\B}{\mathcal{B}}
\newcommand{\dt}{\frac{d}{dt}|_{t=0}}
\newcommand{\brackets}{\{\cdot,\cdot\}}
\newcommand{\V}{\mathcal{V}}
\newcommand{\hv}{\hat{v}}
\newcommand{\T}{\mathcal{T}}
\newcommand{\hQ}{\hat{Q}}
\title{The Gelfand-Zeitlin integrable system and $K$-orbits on the flag variety }
\author[M. Colarusso]{Mark Colarusso}
\address{Department of Mathematics, Idaho State University, Pocatello, ID, 83209}
\email{colamark@isu.edu}
\author[S. Evens]{Sam Evens}
\address{Department of Mathematics, University of Notre Dame, Notre Dame, 46556}
\email{sevens@nd.edu}
\begin{document}
\maketitle
\begin{center}
{\it To Nolan Wallach, on the occasion of his 70th birthday, with gratitude and admiration. }
\end{center}
\begin{abstract}
In this expository paper, we provide an overview of the Gelfand-Zeiltin integrable system on the Lie algebra of $n\times n$ complex matrices $\fgl(n,\C)$
introduced by Kostant and Wallach in 2006.  We discuss results concerning the geometry of the set of strongly regular elements, which consists of the points where 
Gelfand-Zeitlin flow is Lagrangian.  We use the theory of $K_{n}=GL(n-1,\C)\times GL(1,\C)$-orbits
on the flag variety $\mathcal{B}_{n}$ of $GL(n,\C)$ to describe the strongly regular elements in the nilfiber of the moment map of the system.  We give an overview of the general theory of orbits of a symmetric subgroup of a reductive algebraic group acting on its flag variety, and illustrate how the general theory can be applied 
to understand the specific example of $K_{n}$ and $GL(n,\C)$. 

\end{abstract}


\section{Introduction}
In a series of papers \cite{KW1, KW2}, Kostant and Wallach study the action
of an abelian Lie group $A \cong \C^{\frac{n(n-1)}{2}}$ on $\fg = \fgl(n,\C)$.
The Lie algebra $\fa$ of $A$ is the abelian Lie algebra of Hamiltonian
vector fields of the Gelfand-Zeitlin collection of functions $J_{GZ}:=\{ f_{i,j}:i=1,\dots n, \, j=1,\dots i \}$
(see Section \ref{s:gzint} for precise notation).  The set of functions $J_{GZ}$ is Poisson commutative, and
their restriction to each regular adjoint orbit in $\fg$ forms an integrable system.  For each function in the collection, the corresponding Hamilton vector field on $\fg$ is
complete, and the action of $A$ on $\fg$ is given by integrating the
action of $\fa$.   

Kostant and Wallach consider a Zariski open subset $\fg_{sreg}$ of $\fg$,
which consists of all elements $x\in \fg$ such that the differentials
of the functions $J_{GZ}$ are linearly independent
at $x$.  Elements of $\fg_{sreg}$ are called strongly regular,
and Kostant and Wallach show that $\fg_{sreg}$ is exactly the set
of regular elements $x$ of $\fg$ such that the orbit $A\cdot x$ is
Lagrangian in the adjoint orbit of $x$.   In \cite{Col, Col1}, the first
author determined the $A$-orbits in $\fg_{sreg}$ through explicit
computations.   We denote by $\Phi:\fg \to \C^{\frac{n(n+1)}{2}}$ the
map given by $\Phi(x)=( f_{i,j}(x) )$, and note that in \cite{Col, Col1},
the most subtle and interesting case is the nilfiber $\Phi^{-1}(0)$.

The Gelfand-Zeitlin functions are defined using a sequence of projections
$\pi_i:\fgl(i,\C) \to \fgl(i-1,\C)$ given by mapping an $i\times i$ matrix $y$ to its $(i-1)\times (i-1)$ submatrix in 
the upper left hand corner.  Our paper \cite{CEKorbs} 
exploits the fact that each projection $\pi_i$ is equivariant with respect to the
action of $GL(i-1,\C)$ on $\fgl(i,\C)$ by conjugation, where $GL(i-1,\C)$ is embedded in the top left hand corner 
of $GL(i,\C)$ in the natural way.  
In particular, we use the theory of
$GL(i-1,\C)$-orbits on the flag variety $\B_i$ of $\fgl(i,\C)$ for $i=1,\dots, n$ 
to provide a more conceptual understanding of the $A$-orbits in the
nilfiber.  In addition, we prove that every Borel subalgebra contains
strongly regular elements, and hope to develop these methods in order
to better understand the topology of $\fg_{sreg}$.

In this expository paper, we review results of Kostant, Wallach,
and the first author, and then explain how to use the theory of 
$GL(i-1,\C)$-orbits on $\B_i$ in order to derive the results from
\cite{CEKorbs}.   In Section 2, we recall the basic symplectic and Poisson 
geometry needed to construct the Gelfand-Zeitlin integrable system.  We then discuss
the work of Kostant and Wallach in constructing the system and the action of the group $A$, and 
the work of the first author in describing the $A$-orbit structure of $\fg_{sreg}$.  In Section 3, we give an overview
of our results from \cite{CEKorbs} and sketch some of the proofs.  In Section 4, we review
the rich theory of orbits of a symmetric subgroup $K$ on the flag variety
$\B$ of a reductive group $G$, as developed by Richardson, Springer,
and others.   In particular, we show explicitly how the theory applies
if $K=GL(n-1,\C) \times GL(1, \C)$ and $G=GL(n,\C)$, and we hope this section
will make the general theory of $K$-orbits more accessible to 
researchers interested in applying this theory.

It would be difficult to overstate the influence of Nolan Wallach on
the work discussed in this paper.   We look forward to further stimulating
interactions with Nolan in the future, and note that our plans for
developing this work may well depend on utilizing completely different
work of Nolan than that discussed here.
 The work
by the 
second author was partially supported by NSA grants H98230-08-0023 and
 H98230-11-1-0151.

 

\section{The Gelfand-Zeitlin integrable system on $\fgl(n,\C)$}\label{s:gzint}

\subsection{Integrable Systems}\label{ss:intsys}

In this section, we give a brief discussion of integrable systems.  For further details, we refer the reader to \cite{AVMVH}, \cite{Audin}.  We denote by $M$ an analytic (or smooth) manifold with holomorphic (smooth) functions $\mathcal{H}(M)$.  

Let $(M,\omega)$ be a $2n$-dimensional symplectic manifold with symplectic form $\omega\in \wedge^{2} T^{*}M$.  For $f\in \mathcal{H}(M)$, we let $\xi_{f}$ be the unique vector field such that 
\begin{equation}\label{eq:symHam}
 df(Y)=\omega(Y,\xi_{f}),
\end{equation}
for all vector fields $Y$ on $M$.  The vector field $\xi_{f}$ is called the \emph{Hamiltonian} vector field of $f$.  We can use these vector fields to give $\mathcal{H}(M)$ the structure of a Poisson algebra with Poisson bracket:
\begin{equation}\label{eq:symplecticPois}
\{f, g\}:=\omega(\xi_{f}, \xi_{g}), 
\end{equation}
for $f,\, g\in\mathcal{H}(M)$.  That is to say that $\brackets$ makes $\mathcal{H}(M)$ into a Lie algebra and $\brackets$ satisfies a Leibniz rule with respect to the associative multiplication of $\mathcal{H}(M)$.  

To define an integrable system on $(M,\omega)$, we need the following notion.
\begin{dfn}
We say the functions $\{F_{1},\dots, F_{r}\}\subset \mathcal{H}(M)$ are \emph{independent} if the open set $U=\{m\in M:\, (dF_{1})_{m}\wedge\dots\wedge (dF_{r})_{m}\neq 0\}$  is dense in $M$. 
\end{dfn}  

\begin{dfn}\label{dfn:intsystem}
Let $(M,\omega)$ be a $2n$-dimensional symplectic manifold.  An \emph{integrable system} on $M$ is a collection of $n$ independent functions $\{F_{1},\dots, F_{n}\}\subset\mathcal{H}(M)$ such that $\{F_{i}, F_{j}\}=0$ for all $i, j$.  
\end{dfn}

\begin{rem}\label{r:classical}
This terminology originates in Hamiltonian mechanics.  In that context, $(M,\omega, H)$ is a phase space of a classical Hamiltonian system with $n$ degrees of freedom and Hamiltonian function $H\in\mathcal{H}(M)$ (the total energy of the system).  The trajectory of the Hamiltonian vector field $\xi_{H}$ describes the time evolution of the system.  If we are given an integrable system $\{F_{1}=H,\, \dots, F_{n}\}$, then this trajectory can be found using only the operations of function integration and function inversion  (\cite{AVMVH}, Section 4.2).  Such a Hamiltonian system is said to be integrable by quadratures.  

\end{rem}

Integrable systems are important in Lie theory, because they are useful in 
geometric constructions of representations through the theory of quantization \cite{GS},\cite{E}  (see Remark \ref{r:GS} below).  For example, integrable systems provide a way to construct polarizations of symplectic manifolds $(M,\omega)$.  By a polarization, we mean an integrable subbundle of the tangent bundle $P\subset TM$ such that each of the fibers $P_{m}\subset T_{m}M$, is Lagrangian, i.e. $P_{m}=P_{m}^{\perp}$, where $P_{m}^{\perp}$ is the annihilator of $P_{m}$ with respect to the symplectic form $\omega_{m}$ on $T_{m}M$.  A submanifold $S\subset (M,\omega)$ is said to be Lagrangian if $T_{m}(S)$ is Lagrangian for each $m\in S$, so that the leaves of a polarization are Lagrangian submanifolds of $M$.  The existence of a polarization is a crucial ingredient in constructing a geometric quantization of $M$ (for $M$ a real manifold) (see for example, \cite{Woodhouse}), and Lagrangian submanifolds are also important in the study of deformation quantization (see
for example, \cite{NT}).
 
To see how an integrable system on $(M,\omega)$ gives rise to a polarization, we consider the moment map of the system $\{F_{1},\dots, F_{n}\}$: 
\begin{equation}\label{eq:momentum}
\mathbf{F}:M\to K^{n},\; \mathbf{F}(m)=(F_{1}(m),\dots, F_{n}(m)) \mbox{ for } m\in M,
\end{equation}
where $K=\mathbb{R},\,\C$.  Let $U=\{m\in M: (dF_{1})_{m}\wedge\dots\wedge (dF_{n})_{m}\neq 0\}$ and let $P\subset TU$ be $P=\mbox{span}\{\xi_{F_{i}}:\, i=1,\dots, n\}$.  Then $P$ is a polarization of the symplectic manifold $(U,\omega|_{U})$ whose leaves are the connected components of the level sets of $\mathbf{F}|_{U}$, i.e. the regular level sets of $\mathbf{F}$.  Indeed, if $S\subset U$ is a regular level set of $\mathbf{F}$, then $\dim S=\dim M-n=n$.  It then follows that for $m\in S$, $T_{m}(S)=\{(\xi_{F_{i}})_{m}:\, i=1,\dots , n\},$ since the vector fields $\xi_{F_{1}},\dots, \xi_{F_{n}}$ are tangent to $S$ and independent on $U$.  Thus, $T_{m}(S)$ is isotropic by Equation (\ref{eq:symplecticPois}) and of dimension $\dim S=n=\frac{1}{2}\dim U$, so that $T_{m}(S)$ is Lagrangian.  




\subsection{Poisson manifolds and the Lie-Poisson structure}

To study integrable systems in Lie theory, we need to consider not only symplectic manifolds, but Poisson manifolds.  We briefly review some of the basic elements of Poisson geometry here.  For more detail, we refer the reader to \cite{Va},\cite{AVMVH}.

  A Poisson manifold $(M,\{\cdot, \cdot\})$ is an analytic (smooth) manifold where the functions $\mathcal{H}(M)$ have the structure of a Poisson algebra with Poisson bracket $\{\cdot, \cdot\}$.  For example, any symplectic manifold is a Poisson manifold where the Poisson bracket of functions is given by Equation (\ref{eq:symplecticPois}).  For a Poisson manifold $(M,\brackets)$,  the Hamiltonian vector field for $f\in\mathcal{H}(M)$ is given by 
\begin{equation}\label{eq:Ham1}
\xi_{f}(g):=\{f,g \}, 
\end{equation}
$g\in\mathcal{H}(M)$.  In the case where $(M,\omega)$ is symplectic, it is easy to see that this definition of Hamiltonian vector field agrees with the definition given in Equation (\ref{eq:symHam}). 

If we have two Poisson manifolds $(M_{1},\brackets_{1})$ and $(M_{2},\brackets_{2})$, an analytic (smooth) map $\Phi : M_1 \to M_2$ is said to be \emph{Poisson} if 
  \begin{equation}\label{eq:Poissonmap}
  \{f\circ\Phi, g\circ \Phi\}_{1}=\{f,g\}_{2}\circ\Phi,
  \end{equation}
for $f,\,g \in \mathcal{H}(M_{2})$.  That is to say, $\Phi^{*}:\mathcal{H}(M_{2})\to\mathcal{H}(M_{1})$ is a homomorphism of Poisson algebras. Equivalently, for $f\in \mathcal{H}(M_{2})$,
\begin{equation}\label{eq:restHam}
\Phi_{*} \xi_{\Phi^{*} f}=\xi_{f}.  
\end{equation} 
In particular, a submanifold $(S,\brackets_{S})\subset (M,\brackets_{M})$ with Poisson structure $\brackets_{S}$ is said to be a \emph{Poisson submanifold} of $(M,\brackets_{M})$ if the inclusion $i:S\hookrightarrow M$ is Poisson.

In general, Poisson manifolds $(M,\brackets)$ are not symplectic, but they are foliated by symplectic submanifolds called \emph{symplectic leaves}.  Consider the (singular) distribution on $M$ given by
   \begin{equation}\label{eq:chardistr}
   \chi (M):=\mbox{span}\{ \xi_{f}:\, f\in \mathcal{H}(M)\}.
   \end{equation}
The distribution $\chi(M)$ is called the \emph{characteristic distribution} of $(M,\brackets)$.  Using the Jacobi identity for the Poisson bracket $\brackets$, one computes that
\begin{equation}\label{eq:bracket}
   [\xi_{f},\xi_{g}]=\xi_{\{f,g\}}, 
   \end{equation}
   so that the distribution $\chi(M)$ is involutive.  Using a general version of the Frobenius theorem, one can then show that $\chi(M)$ is integrable and the leaves $(S,\brackets_{S})$ are Poisson submanifolds of $(M,\brackets)$, where the Poisson bracket $\brackets_{S}$ is induced by a symplectic form $\omega_{S}$ on $S$ as in Equation (\ref{eq:symplecticPois}).  For further details, see \cite{Va}, Chapter 2.  

  
  
Let $\fg$ be a reductive Lie algebra over $\R$ or $\C$ and let $G$ be any connected Lie group with Lie algebra $\fg$.  Let $\beta(\cdot, \cdot)$ be a non-degenerate, $G$-invariant bilinear form on $\fg$.  Then $\fg$ has the structure of a Poisson manifold, which we call the Lie-Poisson structure.  If $f\in\mathcal{H}(\fg)$, we can use the form $\beta$ to identify the differential $df_{x}\in T_{x}^{*}(\fg)=\fg^{*}$ at $x\in\fg$ with an element $\nabla f(x)\in\fg$.  The element  $\nabla f(x)$ is determined by its pairing against $z\in \fg \cong T_x(\fg)$ by the formula,  
  \begin{equation}\label{eq:differential}
  \beta(\nabla f(x), z)=\dt f (x+tz)=df_{x}(z).
  \end{equation}
 We then define a Poisson bracket on $\mathcal{H}(\fg)$ by:
   \begin{equation}\label{eq:Pbracket}
  \{f,g\}(x)=\beta(x, [\nabla f(x),\nabla g(x)]). 
  \end{equation}
It can be shown that this definition of the Poisson structure on $\fg$ is independent of the choice of form $\beta$ in the sense that a different form gives rise to an isomorphic Poisson manifold structure on $\fg$.




From (\ref{eq:Pbracket}) it follows that
  \begin{equation}\label{eq:Ham}
  (\xi_{f})_{x}=[x,\nabla f(x)]\in T_{x}(\fg)=\fg.
  \end{equation}
For $x\in\fg$, let $G\cdot x$ denote its adjoint orbit.  From Equation (\ref{eq:Ham}), it follows that the fiber of the characteristic distribution of $(\fg,\brackets)$ at $x$ is 
$$(\chi(\fg))_{x}=\{[x,y]: y\in\fg\}= T_{x}(G\cdot x).$$
 One can then show that the symplectic leaves of $(\fg,\brackets)$ are the adjoint orbits of $G$ on $\fg$ with the canonical Kostant-Kirillov-Souriau (KKS) symplectic structure (see for example, \cite{CG}, Proposition 1.3.21).  Since $G\cdot x\subset \fg$ is a Poisson submanifold, it follows from Equations (\ref{eq:Poissonmap}) and (\ref{eq:restHam}) that 
  \begin{equation}\label{eq:compat}
  \{f,g\}_{LP}|_{G\cdot x}=\{f|_{G\cdot x}, g|_{G\cdot x}\}_{KKS} \mbox { and }  \xi_{f}^{LP}|_{G\cdot x}=\xi^{KKS}_{f|_{G\cdot x}}
  \end{equation}
  for $f,\, g\in\mathcal{H}(\fg)$, where the Poisson bracket and Hamiltonian field on the left side of the equations are defined using the Lie-Poisson structure, and on the right side they are defined using the KKS symplectic structure as in Section \ref{ss:intsys}.    

This description of the symplectic leaves allows us to easily identify the Poisson central functions of $(\fg,\brackets)$.  We call a function $f\in\mathcal{H}(\fg)$ a \emph{Casimir} if $\{f,g\}=0$ for all $g\in\mathcal{H}(\fg)$.  Clearly, $f$ is a Casimir if and only if $\xi_{f}=0$.  Equation (\ref{eq:compat}) implies this occurs if and only if $df|_{G\cdot x}=0$, since $G\cdot x$ is symplectic.  Thus, the Casimirs for the Lie-Poisson structure on $\fg$ are precisely the $\Ad(G)$-invariant functions, $\mathcal{H}(\fg)^{G}$.

 The symplectic leaves of $(\fg,\brackets)$ of maximal dimension play an important role in our discussion.  For $x\in\fg$, let $\fz_{\fg}(x)$ denote the centralizer of $x$.  We call an element $x\in\fg$ \emph{regular} if $\dim \fz_{\fg}(x)=\mbox{rank}(\fg)$ is minimal \cite{K}.  The orbit $G\cdot x$ then has maximum possible dimension, i.e., $\dim (G\cdot x)=\dim\fg-\mbox{rank}(\fg)$.

  \subsection{Construction of the Gelfand-Zeitlin integrable system on $\fgl(n,\C)$}
  
  
 Let $\fg=\fgl(n,\C)$ and let $G=GL(n,\C)$.  Then $\fg$ is reductive with non-degenerate, invariant form $\beta(x,y)=tr(xy)$, where $tr(xy)$ denote the trace of the matrix $xy$ for $x,\, y\in \fg$.  Thus, $\fg$ is a Poisson manifold with the Lie-Poisson structure.  In this section, we construct an independent, Poisson commuting family of functions on $\fg$, whose restriction to each regular adjoint orbit $G\cdot x$ forms an integrable system in sense of Definition \ref{dfn:intsystem}.  We refer to this family of functions as the Gelfand-Zetilin integrable system on $\fg$.  The family is constructed using Casimir functions for certain Lie subalgebras of $\fg$ and extending these functions to Poisson commuting functions on all of $\fg$.  
  
 We consider the following Lie subalgebras of $\fg$.  For $i=1,\dots, n-1$, we embed $\fgl(i,\C)$ into $\fg$ in the upper left corner and denote its image by $\fg_{i}$.  That is to say, $\fg_{i}= \{ x\in\fg: x_{k,j}=0, \mbox{ if } k>i \mbox{ or }  j>i\}$.   Let $G_{i}\subset GL(n,\C)$ be the corresponding closed subgroup.  If $\fg_{i}^{\perp}$ denotes the orthogonal complement of $\fg_{i}$ with respect to the form $\beta$, then $\fg=\fg_{i}\oplus \fg_{i}^{\perp}$.  Thus, the restriction of the form $\beta$ to $\fg_{i}$ is non-degenerate, so we can use it to define the Lie-Poisson structure of $\fg_{i}$ via Equation (\ref{eq:Pbracket}).  We have a natural projection $\pi_{i}:\fg\to\fg_{i}$ given by $\pi_{i}(x)=x_{i}$, where $x_{i}$ is the upper left $i\times i $ corner of $x$, that is, $(x_{i})_{k,j}=x_{k,j}$ for $1\leq k,j\leq i$ and is zero otherwise.  The following lemma is the key ingredient in the construction of the Gelfand-Zeitlin integrable system on $\fg$.  
 
 \begin{lem}\label{l:proj}
 The projection $\pi_{i}: \fg\to\fg_{i}$ is Poisson with respect to the Lie-Poisson structures on $\fg$ and $\fg_{i}$.  
 \end{lem}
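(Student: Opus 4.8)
The plan is to verify the Poisson map condition \eqref{eq:Poissonmap} directly from the explicit formula \eqref{eq:Pbracket} for the Lie-Poisson bracket, exploiting the compatibility of the orthogonal decomposition $\fg = \fg_i \oplus \fg_i^\perp$ with the projection $\pi_i$. Concretely, for $f, g \in \mathcal{H}(\fg_i)$, I must show $\{f \circ \pi_i, g \circ \pi_i\}_{\fg}(x) = \{f, g\}_{\fg_i}(\pi_i(x))$ for all $x \in \fg$, where the left side uses the Lie-Poisson bracket on $\fg$ built from $\beta(x,y) = tr(xy)$ and the right side uses the Lie-Poisson bracket on $\fg_i$ built from the restricted form.

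The key computational step is to identify the gradient $\nabla(f \circ \pi_i)(x) \in \fg$. Since $\pi_i$ is the linear projection onto $\fg_i$ along $\fg_i^\perp$, and this decomposition is $\beta$-orthogonal, for $z \in \fg$ one computes $d(f\circ\pi_i)_x(z) = df_{\pi_i(x)}(\pi_i(z)) = \beta_{\fg_i}(\nabla f(\pi_i(x)), \pi_i(z)) = \beta(\nabla f(\pi_i(x)), z)$, the last equality because $\nabla f(\pi_i(x)) \in \fg_i$ is $\beta$-orthogonal to $\fg_i^\perp$ and $\beta$ restricted to $\fg_i$ agrees with $\beta_{\fg_i}$. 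Hence $\nabla(f \circ \pi_i)(x) = \nabla f(\pi_i(x)) \in \fg_i$, viewed inside $\fg$. The same holds for $g$.

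Plugging into \eqref{eq:Pbracket}, the left side becomes $\beta(x, [\nabla f(\pi_i(x)), \nabla g(\pi_i(x))])$. Now both gradient terms lie in $\fg_i$, so their bracket $[\nabla f(\pi_i(x)), \nabla g(\pi_i(x))]$ lies in $\fg_i$; therefore pairing it with $x$ via $\beta$ is the same as pairing it with $\pi_i(x)$, since $x - \pi_i(x) \in \fg_i^\perp$ annihilates $\fg_i$ under $\beta$. Thus the left side equals $\beta(\pi_i(x), [\nabla f(\pi_i(x)), \nabla g(\pi_i(x))])$, which is exactly $\{f, g\}_{\fg_i}(\pi_i(x))$ by \eqref{eq:Pbracket} applied on $\fg_i$ (again using that $\beta|_{\fg_i} = \beta_{\fg_i}$). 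This establishes \eqref{eq:Poissonmap} and hence the lemma.

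The main point requiring care — rather than a genuine obstacle — is the bookkeeping around the two roles of $\beta$: one must check that $\fg_i$ being a subalgebra closed under the bracket, together with the $\beta$-orthogonality of $\fg_i^\perp$ to $\fg_i$, lets one freely replace $x$ by $\pi_i(x)$ and interpret gradients computed in $\fg$ as gradients in $\fg_i$. A clean way to organize this is to first record as a sublemma that for the $\beta$-orthogonal decomposition $\fg = \fg_i \oplus \fg_i^\perp$ one has $\pi_i$ equal to $\beta$-orthogonal projection and $[\fg_i, \fg_i] \subseteq \fg_i$; everything else is then formal.
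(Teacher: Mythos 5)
Your proof is correct and follows essentially the same approach as the paper, relying on the $\beta$-orthogonal decomposition $\fg = \fg_i \oplus \fg_i^\perp$ together with the explicit formula (\ref{eq:Pbracket}) for the Lie-Poisson bracket. The only difference is cosmetic: the paper first invokes the Leibniz rule to reduce the verification to linear functions $\lambda_x(z) = \beta(x,z)$ on $\fg_i$, whereas you compute $\nabla(f\circ\pi_i)(x) = \nabla f(\pi_i(x))$ directly by the chain rule for arbitrary $f$, which is equally valid and makes the argument slightly more self-contained.
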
 
 \begin{proof}
  Since the Poisson brackets on $\mathcal{H}(\fg)$ and $\mathcal{H}(\fg_{i})$ satisfy the Leibniz rule, it suffices to show Equation (\ref{eq:Poissonmap}) for linear functions $\lambda_{x}$, $\mu_{y}\in\mathcal{H}(\fg_{i})$, where $\lambda_{x}(z)=\beta(x,z)$ and $\mu_{y}(z)=\beta(y,z)$ for $x,\,y,\, z\in\fg_{i}$.  This is an easy computation using the definition of the Lie-Poisson structure in Equation (\ref{eq:Pbracket}) and the decomposition $\fg=\fg_{i}\oplus \fg_{i}^{\perp}$.


 \end{proof}
 
 Let $\C[\fg]$ denote the algebra of polynomial functions on $\fg$.  
 Let 
 \begin{equation}\label{eq:tensor}
 J(n):=< \pi_{1}^{*} (\C[\fg]^{G_{1}}),\dots, \pi_{n-1}^{*}(\C[\fg_{n-1}]^{G_{n-1}}), \C[\fg]^{G}>
 \end{equation}
 be the associative subalgebra of $\C[\fg]$ generated by $\pi_{i}^{*}(\C[\fg_{i}]^{G_{i}})$ for $i\leq n-1$ and $\C[\fg]^{G}$.  
 
 \begin{prop}\label{p:GZalg}
 The algebra $J(n)$ is a Poisson commutative subalgebra of $\C[\fg]$.  
  \end{prop}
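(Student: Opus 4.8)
The plan is to reduce the assertion to the generators and then invoke Lemma~\ref{l:proj} together with the identification of Casimirs with invariant functions. Since the Poisson bracket on $\C[\fg]$ satisfies the Leibniz rule, the bracket of a product of generators against another such product expands into a sum of brackets of individual generators; hence it suffices to prove that $\{\pi_i^* f,\ \pi_j^* g\} = 0$ whenever $1 \le i \le j \le n$, $f \in \C[\fg_i]^{G_i}$ and $g \in \C[\fg_j]^{G_j}$, where for uniformity I set $\fg_n = \fg$, $G_n = G$ and $\pi_n = \id$. Thus the whole content of the proposition is the pairwise commutation of the generating subalgebras $\pi_i^*(\C[\fg_i]^{G_i})$.

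Fix $i \le j$ as above. First I would record the compatibility of the nested corner projections: if $\pi_i^{(j)}\colon \fg_j \to \fg_i$ denotes the ``upper-left $i \times i$ corner'' projection of $\fg_j = \fgl(j,\C)$ (the analogue of $\pi_i$ one dimension down), then $\pi_i = \pi_i^{(j)} \circ \pi_j$ as maps $\fg \to \fg_i$, because extracting the top-left $i \times i$ block of a matrix is the same as first extracting its top-left $j \times j$ block and then the top-left $i \times i$ block of that. Consequently $\pi_i^* f = \pi_j^*\bigl((\pi_i^{(j)})^* f\bigr)$; write $\tilde f := (\pi_i^{(j)})^* f \in \C[\fg_j]$ for this pullback.

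The second ingredient is that $g$, being $G_j$-invariant, is a Casimir for the Lie-Poisson structure on $\fg_j$: since $G_j = GL(j,\C)$ is connected, the discussion preceding the proposition identifies $\C[\fg_j]^{G_j}$ with the Poisson center of $(\fg_j,\brackets_{\fg_j})$; for $j = n$ this is simply the statement, already proved above, that $\C[\fg]^{G}$ is the Poisson center of $\fg$. Now, using that $\pi_j$ is a Poisson map by Lemma~\ref{l:proj}, I would compute
\[
\{\pi_i^* f,\ \pi_j^* g\}_{\fg}
= \{\pi_j^* \tilde f,\ \pi_j^* g\}_{\fg}
= \pi_j^*\bigl(\{\tilde f,\, g\}_{\fg_j}\bigr)
= \pi_j^*(0) = 0,
\]
the third equality holding because $g$ is a Casimir of $\fg_j$. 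Combined with the Leibniz reduction, this proves the proposition.

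I do not expect a genuine obstacle here: all the substance is carried by Lemma~\ref{l:proj} and by the fact (established in the preceding subsection, and applicable to each $\fg_j$ precisely because the groups $GL(j,\C)$ are connected) that invariant functions are exactly the Casimirs. The only points needing a moment's care are the bookkeeping of the iterated projections $\pi_i = \pi_i^{(j)} \circ \pi_j$ and remembering to fold the top-degree factor $\C[\fg]^{G}$ into the same argument as the case $j = n$; both are routine.
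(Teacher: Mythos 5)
Your proof is correct and rests on the same two pillars as the paper's: Lemma~\ref{l:proj} (the corner projections are Poisson) and the identification of $\C[\fg_j]^{G_j}$ with the Casimirs of the Lie--Poisson structure on $\fg_j$. The paper packages the argument as an induction on $n$ using $J(n)=\langle\pi_{n-1}^*(J(n-1)),\C[\fg]^G\rangle$, whereas you unroll that induction into a direct pairwise check via the factorization $\pi_i=\pi_i^{(j)}\circ\pi_j$; the substance is identical.
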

 \begin{proof}
  The proof proceeds by induction on $n$, the case $n=1$ being trivial.  Suppose that $J(n-1)$ is Poisson commutative.  Then $J(n)=<\pi_{n-1}^{*}(J(n-1)), \C[\fg]^{G}>$ is the associative algebra generated by $\pi_{n-1}^{*}(J(n-1))$ and $ \C[\fg]^{G}$.  By Lemma \ref{l:proj}, $\pi_{n-1}^{*} (J(n-1))$ is Poisson commutative, and the elements of $\C[\fg]^{G}$ are Casimirs, so that $J(n)$ is Poisson commutative. 
  \end{proof}
  
  \begin{rem}
  It can be shown that the algebra $J(n)$ is a maximal Poisson commutative subalgebra of $\C[\fg]$ (\cite{KW1}, Theorem 3.25).
  \end{rem}
  
  The Gelfand-Zeitlin integrable system is obtained by choosing a set of generators for the algebra $J(n)$.  Let $\C[\fg_{i}]^{G_{i}}=\C[f_{i,1},\dots, f_{i,i}]$, where $f_{i,j}=tr(x_{i}^{j})$ for $j=1,\dots, i$.  Then the functions 
  \begin{equation}\label{eq:GZfun}
  J_{GZ}:= \{ f_{i,j}: i=1,\dots, n,\, j=1,\dots, i\}
  \end{equation}
  generate the algebra $J(n)$ as an associative algebra.  We claim that $J_{GZ}$ is an independent, Poisson commuting set of functions whose restriction to each regular $G\cdot x$ forms an integrable system.  
  
  By Proposition \ref{p:GZalg}, the functions $J_{GZ}$  Poisson commute.  To see that the functions $J_{GZ}$ are independent,  we study the following morphisms:
  $$
  \Phi_{i}:\fg_{i}\to\C^{i},\; \Phi_{i}(y)=(f_{i,1}(y),\dots, f_{i,i}(y)),
  $$
  for $i=1,\dots, n$.  We define the Kostant-Wallach map to be the morphism
  \begin{equation}\label{eq:KWmap}
  \Phi: \fg\to\C^{\dnone} \mbox{ given by } \Phi(x)=(\Phi_{1}(x_{1}),\dots, \Phi_{i}(x_{i}),\dots, \Phi_{n}(x_{n})).
  \end{equation}
For $z\in \fg_i$, let $\sigma_i(z)$ equal the collection of
$i$ eigenvalues of $z$ counted with repetitions,
 where here we regard $z$ as an $i\times i$ matrix.

\begin{rem}\label{rem_git}
If $x, y \in \fg$, then $\Phi(x)=\Phi(y)$ if and only if
$\sigma_i(x_i)=\sigma_i(y_i)$ for $i=1, \dots, n$.  This follows from the fact that $\C[\fg_{i}]^{G_{i}}=\C[f_{i,1},\dots, f_{i,i}]=\C[p_{i,1},\dots, p_{i,i}]$, where $p_{i,j}$ is the coefficient of $t^{j-1}$ in the characteristic polynomial of $x_{i}$ thought of as an $i\times i$ matrix.  In particular, $\Phi(x)=(0,\dots, 0)$ if and only if $x_{i}$ is nilpotent for $i=1,\dots, n$. 
\end{rem}

Kostant and Wallach produce a cross-section to the map $\Phi$ using the (upper) Hessenberg matrices.  For $1\leq i,j\leq n$, let $E_{i,j}\in\fg$ denote the elementary matrix with $1$ in the $(i,j)$-th entry and zero elsewhere.  Let $\fb_{+}\subset\fg$ be the standard Borel subalgebra of upper triangular matrices and let $e=\sum_{i=2}^{n} E_{i,i-1}$ be the sum of the negative simple root vectors.  We call elements of the affine variety $e+\fb$ (upper) Hessenberg matrices:
$$
e+\fb=\left [\begin{array}{ccccc}
a_{11} & a_{12} &\cdots & a_{1n-1} & a_{1n}\\
1 & a_{22} &\cdots & a_{2n-1} & a_{2n}\\
0 & 1 & \cdots & a_{3n-1}& a_{3n}\\
\vdots &\vdots &\ddots &\vdots &\vdots\\
0 & 0 &\cdots & 1 &a_{nn}\end{array}\right ]_{\mbox{\large .}}
$$
Kostant and Wallach prove the following remarkable fact (\cite{KW1}, Theorem 2.3).  
\begin{thm}\label{thm:Hess}
The restriction of the Kostant-Wallach map $\Phi|_{e+\fb}: e+\fb\to \C^{\dnone}$ to the Hessenberg matrices $e+\fb$ is an isomorphism of algebraic varieties.  
\end{thm}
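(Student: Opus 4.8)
The plan is to show that $\Phi|_{e+\fb}$ is a bijective morphism of affine varieties whose inverse is also a morphism, by exhibiting an explicit, recursively-defined polynomial inverse. First I would observe that both $e+\fb$ and $\C^{\dnone}$ are affine spaces of the same dimension $\dnone$: indeed $\dim(e+\fb)=\dim\fb=\binom{n+1}{2}$, matching the target. So it suffices to prove that $\Phi|_{e+\fb}$ is bijective on closed points and that the set-theoretic inverse is given by polynomials in the coordinates $(p_{i,j})$; equivalently, by Remark~\ref{rem_git}, it suffices to reconstruct a Hessenberg matrix uniquely and polynomially from the list of characteristic polynomials of its successive upper-left $i\times i$ blocks.

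The key step is an induction on $n$. Given a Hessenberg matrix $H=e+b\in e+\fb$, its upper-left $(n-1)\times(n-1)$ block $H_{n-1}$ is again Hessenberg (of size $n-1$), and by the inductive hypothesis $H_{n-1}$ is determined polynomially by $\Phi_1(H_1),\dots,\Phi_{n-1}(H_{n-1})$. It then remains to recover the last row and last column of $H$ — that is, the entries $a_{1n},\dots,a_{nn}$ — from $H_{n-1}$ together with the characteristic polynomial of $H$ (equivalently, from $\Phi_n(H_n)$). The crucial structural fact is that because of the subdiagonal of ones, the pair $(H_{n-1}, E_{n,n-1})$ — i.e. the $(n-1)\times(n-1)$ block and the position of the $1$ feeding into it — behaves like a cyclic vector situation: the characteristic polynomial of $H$, expanded along the last column (or via the Schur-complement / companion-type identity for Hessenberg matrices), depends on $(a_{1n},\dots,a_{nn})$ through an \emph{invertible affine-triangular} change of variables. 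Concretely, expanding $\det(tI-H)$ along the last column expresses it as $(t-a_{nn})\det(tI-H_{n-1}) - \sum_{k=1}^{n-1} a_{kn}\,C_k(t)$, where $C_k(t)$ is (up to sign) the determinant of the matrix obtained from $tI-H_{n-1}$ by deleting appropriate rows/columns; the ones on the subdiagonal force each $C_k(t)$ to be a \emph{monic} polynomial of degree exactly $n-1-k$ (after accounting for signs), and $\det(tI-H_{n-1})$ is monic of degree $n-1$. Matching coefficients of $t^{n-1},t^{n-2},\dots,t^0$ in $\det(tI-H)$ then yields a linear system for $(a_{nn},a_{n-1,n},\dots,a_{1n})$ whose coefficient matrix is lower-triangular with $1$'s on the diagonal, hence invertible, and whose entries are polynomials in the already-known entries of $H_{n-1}$. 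Thus $(a_{1n},\dots,a_{nn})$ is recovered as a polynomial function of $\Phi_n(H_n)$ and of $H_{n-1}$, completing the induction: $\Phi|_{e+\fb}$ is bijective with polynomial (hence morphic) inverse, so it is an isomorphism of varieties.

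I would organize the write-up as: (i) dimension count; (ii) reduction via Remark~\ref{rem_git} to reconstructing $H$ from the characteristic polynomials of its blocks; (iii) the inductive step, with the Laplace expansion along the last column and the observation that the resulting linear system is unitriangular; (iv) conclude that both $\Phi|_{e+\fb}$ and its inverse are morphisms, so it is an isomorphism of affine varieties.

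\textbf{Main obstacle.} The delicate point is step (iii): verifying cleanly that the cofactors $C_k(t)$ arising in the last-column Laplace expansion are (up to a predictable sign) monic of the claimed degrees, so that the linear system for the last column really is unitriangular. This is where the Hessenberg shape — the subdiagonal of $1$'s — is essential, and it must be used carefully; an alternative, perhaps cleaner, route is to argue via the Schur complement $t - a_{nn} - v^{T}(tI-H_{n-1})^{-1}u$ (with $u$ the last column of the block part and $v^{T}=E_{n,n-1}$ acting on the left), using that $(tI-H_{n-1})^{-1}E_{n-1,?}$ has controlled pole/degree structure because $H_{n-1}$ is itself Hessenberg. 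Either way, the combinatorics of which minors survive is the part that needs genuine care; everything else is bookkeeping.
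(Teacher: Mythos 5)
Your approach is correct and, for what it's worth, is essentially the standard one: the paper does not reprove this result but cites \cite{KW1}, Theorem 2.3, and Kostant--Wallach's own argument is precisely the inductive reconstruction of the last column from the characteristic polynomial via the Hessenberg Laplace expansion, combined with the change of coordinates from the power sums $f_{i,j}$ to the coefficients $p_{i,j}$ of the characteristic polynomial (Newton's identities, valid over $\C$), which your appeal to Remark \ref{rem_git} implicitly handles. The dimension count and the reduction to recovering $H$ polynomially from $(\Phi_1(H_1),\dots,\Phi_n(H_n))$ are fine.

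One computational slip and one small imprecision in your step (iii). Expanding $\det(tI-H)$ along the last column and using the Hessenberg shape (ones on the subdiagonal of $H$, hence $-1$'s on the subdiagonal of $tI-H$), the minor obtained by deleting row $k$ and column $n$ has block-triangular form
$\begin{pmatrix} tI_{k-1}-H_{k-1} & * \\ 0 & L \end{pmatrix}$,
where $L$ is lower-triangular with $-1$'s on the diagonal. Hence, up to sign, the cofactor $C_k(t)$ equals $\det(tI_{k-1}-H_{k-1})$, which is monic of degree $k-1$, \emph{not} $n-1-k$ as you wrote (you have the degrees reversed; for $k=1$ one gets degree $0$, for $k=n-1$ degree $n-2$). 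Also, $C_k(t)$ is a minor of the full $n\times n$ matrix $tI-H$ (it uses row $n$), not literally a minor of $tI-H_{n-1}$; it just happens to be expressible via the leading principal block $H_{k-1}$. Neither affects the conclusion: one obtains the identity
$t\,p_{n-1}(t) - p_n(t) = a_{nn}\,p_{n-1}(t) + a_{n-1,n}\,p_{n-2}(t) + \cdots + a_{1n}\,p_0(t)$,
where $p_j = \det(tI_j - H_j)$, and since the $p_0,\dots,p_{n-1}$ are monic of degrees $0,\dots,n-1$ they are a basis of polynomials of degree $\le n-1$, so the system is unitriangular with coefficients polynomial in $H_{n-1}$, giving the unique polynomial solution $(a_{1n},\dots,a_{nn})$ as you claim. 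With the degree corrected (or simply stating $C_k \propto \det(tI_{k-1}-H_{k-1})$, which is the cleanest formulation), the proof is complete.
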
 

\begin{rem}\label{r:Ritz}
For $x\in\fg$, let $\mathcal{R}(x)=\{\sigma_{1}(x_{1}),\dots, \sigma_{i}(x_{i}), \dots, \sigma_{n}(x)\}$ be the collection of $\dnone$-eigenvalues of $x_{1},\dots, x_{i}, \dots, x$ counted with repetitions.  The numbers $\mathcal{R}(x)$ are called the Ritz values of $x$ and play an important role in numerical linear algbera (see for example \cite{Parbook},\cite{PS}).  In this language, Theorem \ref{thm:Hess} says that any $\dnone$-tuple of complex numbers can be the Ritz values of an $x\in\fg$ and that there is a unique Hessenberg matrix having those numbers as Ritz values.  Contrast this with the Hermitian case in which the necessarily real eigenvalues of $x_{i}$ must interlace those of $x_{i-1}$ (see for example \cite{HJ}).  This discovery has led to some new work on Ritz values by linear algebaists \cite{PS},\cite{PShom}.  

\end{rem}


Theorem \ref{thm:Hess} suggests the following definition from \cite{KW1}.  
\begin{dfn}\label{d:sreg}
We say that $x\in\fg$ is strongly regular if the differentials $\{(df_{i,j})_{x}: i=1,\dots n,\, j=1,\dots, i\}$ are linearly independent.  We denote the set of strongly regular elements of $\fg$ by $\fg_{sreg}$.  
\end{dfn}
  By Theorem \ref{thm:Hess}, $e+\fb\subset\fg_{sreg}$, and since $\fg_{sreg}$ is Zariski open, it is  dense in both the Zariski topology and the Hausdorff topology on $\fg$ \cite{M}.  Hence, the polynomials $J_{GZ}$ in (\ref{eq:GZfun}) are independent.  For $c\in\C^{\dnone}$, let $\Phi^{-1}(c)_{sreg}:=\Phi^{-1}(c)\cap \fg_{sreg}$ denote the strongly regular elements of the fiber $\Phi^{-1}(c)$.  It follows from Theorem \ref{thm:Hess} that $\Phi^{-1}(c)_{sreg}$ is nonempty for any $c\in\C^{\dnone}$.  

By a well-known result of Kostant \cite{K}, if $x$ is strongly regular, then $x_{i}\in\fg_{i}$ is regular for all $i$.  We state several equivalent characterizations of strong regularity.
\begin{prop}\label{p:sreg}(\cite{KW1}, Proposition 2.7 and Theorem 2.14)
The following statements are equivalent. 
\begin{enumerate}
\item $x$ is strongly regular.

\item The tangent vectors $\{(\xifij)_{x};\, i=1,\dots, n-1,\, j=1,\dots, i\}$ are linearly independent.

\item The elements $x_{i}\in\fg_{i}$ are regular for all $i=1,\dots, n$ and $\fz_{\fg_{i}}(x_{i})\cap\fz_{\fg_{i+1}}(x_{i+1})=0$ for $i=1,\dots, n-1$, where $\fz_{\fg_{i}}(x_{i})$ denotes the centralizer of $x_{i}$ in $\fg_{i}$. 

\end{enumerate}

\end{prop}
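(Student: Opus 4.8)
The plan is to translate all three conditions into statements of linear algebra about the elements $x_{i}^{\,j-1}\in\fg_{i}\subseteq\fg$ and their images under $\ad(x)$. The key preliminary computation is that of the gradients: since $f_{i,j}(x)=tr(x_{i}^{\,j})$ with $x_{i}=\pi_{i}(x)$, the defining relation (\ref{eq:differential}) together with the decomposition $\fg=\fg_{i}\oplus\fg_{i}^{\perp}$ gives $\nabla f_{i,j}(x)=j\,x_{i}^{\,j-1}$, and hence by (\ref{eq:Ham}) that $(\xifij)_{x}=j\,[x,x_{i}^{\,j-1}]$. In particular $(\xi_{f_{n,j}})_{x}=j[x,x^{\,j-1}]=0$, which is exactly why statement (2) ranges only over $i\le n-1$ and concerns $\dn$ vectors.

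Two elementary lemmas do most of the work. First: for $z\in\fgl(i,\C)$ the powers $z^{0},z^{1},\dots,z^{i-1}$ are linearly independent if and only if $z$ is regular (minimal polynomial $=$ characteristic polynomial), and in that case their span is $\C[z]=\fz_{\fgl(i)}(z)$, of dimension $i$. Second, a ``nesting lemma'': since $\fg_{1}\subseteq\cdots\subseteq\fg_{n}=\fg$ and $\fz_{\fg_{k}}(x_{k})\subseteq\fg_{k}$, a one-line block-matrix computation shows $\fg_{k}\cap\fz_{\fg_{k+1}}(x_{k+1})\subseteq\fz_{\fg_{k}}(x_{k})$ for every $k$; consequently, for any $m$, the sum $\sum_{i=1}^{m}\fz_{\fg_{i}}(x_{i})$ is direct if and only if $\fz_{\fg_{i}}(x_{i})\cap\fz_{\fg_{i+1}}(x_{i+1})=0$ for $1\le i\le m-1$ (induct on $m$, using that $\sum_{i<m}\fz_{\fg_{i}}(x_{i})\subseteq\fg_{m-1}$, so its intersection with $\fz_{\fg_{m}}(x_{m})$ lies in $\fz_{\fg_{m-1}}(x_{m-1})\cap\fz_{\fg_{m}}(x_{m})$).

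Granting these, the equivalences run as follows. Condition (1) says precisely that $\{x_{i}^{\,j-1}:1\le j\le i\le n\}$ is linearly independent; by the first lemma this amounts to saying that each $x_{i}$ is regular and $\sum_{i=1}^{n}\fz_{\fg_{i}}(x_{i})$ is direct, which by the nesting lemma is exactly (3), so $(1)\Leftrightarrow(3)$. For $(1)\Rightarrow(2)$: from (3) and the inclusion $\fg_{n-1}\cap\fz_{\fg}(x)\subseteq\fz_{\fg_{n-1}}(x_{n-1})$ we get $\fg_{n-1}\cap\fz_{\fg}(x)=0$, so a relation $\sum_{i\le n-1}c_{i,j}[x,x_{i}^{\,j-1}]=0$ places $\sum c_{i,j}x_{i}^{\,j-1}$ in $\fg_{n-1}\cap\fz_{\fg}(x)=0$ and hence forces all $c_{i,j}=0$ by (1). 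For $(2)\Rightarrow(1)$: the $\dn$ vectors $[x,x_{i}^{\,j-1}]$ ($i\le n-1$) span $\ad(x)(V)$ with $V=\lspan\{x_{i}^{\,j-1}:1\le j\le i\le n-1\}$, so $\dn=\dim\ad(x)(V)=\dim V-\dim(V\cap\fz_{\fg}(x))\le\dim V\le\dn$ forces $\dim V=\dn$ and $V\cap\fz_{\fg}(x)=0$; by the two lemmas the first condition gives that $x_{1},\dots,x_{n-1}$ are regular with $\fz_{\fg_{i}}(x_{i})\cap\fz_{\fg_{i+1}}(x_{i+1})=0$ for $i\le n-2$, and since $\fz_{\fg_{n-1}}(x_{n-1})\subseteq V$ the second gives $\fz_{\fg_{n-1}}(x_{n-1})\cap\fz_{\fg}(x)=0$. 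So every clause of (3) is in hand except possibly the regularity of $x$ itself.

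The one genuinely non-formal step — and the main obstacle — is therefore to show that if $x_{n-1}$ is regular and $\fz_{\fg_{n-1}}(x_{n-1})\cap\fz_{\fg}(x)=0$, then $x$ is regular. I would argue by contrapositive. Write $x$ in block form with top-left $(n-1)\times(n-1)$ corner $x_{n-1}$, off-diagonal column $b$, row $c$, and corner entry $d$. Since $x_{n-1}$ is regular, $\fz_{\fg_{n-1}}(x_{n-1})=\C[x_{n-1}]$ and $\C^{n-1}$ is a cyclic $\C[x_{n-1}]$-module, so $b=q(x_{n-1})\xi$ and $c=\eta^{T}r(x_{n-1})$ for polynomials $q,r$ and cyclic vectors $\xi$, $\eta$ (for $x_{n-1}$, resp.\ $x_{n-1}^{T}$). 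A direct block computation then identifies $\fz_{\fg_{n-1}}(x_{n-1})\cap\fz_{\fg}(x)$ with $\{p(x_{n-1}):\ \chi_{x_{n-1}}\mid pq\ \text{and}\ \chi_{x_{n-1}}\mid pr\}$, which is nonzero exactly when $\gcd(\chi_{x_{n-1}},q,r)\ne 1$; and computing the minimal polynomial of $x$ in a cyclic basis for $x_{n-1}$ (in which $x$ becomes a companion matrix bordered by one row and one column) shows that $x$ fails to be regular in precisely that same case. This last point is the substance of \cite{KW1} (Proposition 2.7 and Theorem 2.14) — in an expository treatment one could simply quote it — and it is the part I expect to require the most care. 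With it, $(2)\Rightarrow(3)\Rightarrow(1)$ and the cycle of implications closes.
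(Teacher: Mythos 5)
The paper itself does not prove this proposition---it is cited verbatim from \cite{KW1}---so I am evaluating your argument on its own merits rather than against a proof in the text.

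Most of your proposal is correct and cleanly organized. The gradient computation $\nabla f_{i,j}(x)=j\,x_i^{j-1}$ and $(\xifij)_x=j[x,x_i^{j-1}]$ is right, and both preliminary lemmas (powers of $z$ independent $\Leftrightarrow$ $z$ regular, and the nesting property $\fg_k\cap\fz_{\fg_{k+1}}(x_{k+1})\subseteq\fz_{\fg_k}(x_k)$ together with the directness criterion) are correct. Your equivalence $(1)\Leftrightarrow(3)$, the implication $(1)\Rightarrow(2)$, and the dimension count in $(2)\Rightarrow(3)$ are all sound: you correctly reduce the problem to showing that $x_{n-1}$ regular together with $\fz_{\fg_{n-1}}(x_{n-1})\cap\fz_{\fg}(x)=0$ forces $x$ itself to be regular.

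The one thing that fails is the claimed biconditional \emph{``$x$ fails to be regular in precisely the same case''} (i.e.\ $x$ non-regular $\Leftrightarrow\gcd(\chi_{x_{n-1}},q,r)\neq 1$). It is false. Take $n=3$, $x_{2}=E_{12}$ (so $\chi_{x_{2}}=t^{2}$, cyclic vectors $\xi=e_{2}$, $\eta=e_{1}$), and
\[
x=\begin{pmatrix}0&1&1\\0&0&0\\0&1&0\end{pmatrix},
\]
so that $b=e_{1}=x_{2}\xi$, $c^{T}=e_{2}^{T}=\eta^{T}x_{2}$, i.e.\ $q(t)=r(t)=t$, and $\gcd(t^{2},t,t)=t\neq 1$. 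Your formula (which I verified) gives $\fz_{\fg_{2}}(x_{2})\cap\fz_{\fg}(x)=\C E_{12}\neq 0$, consistent with the $\gcd$ test. Nevertheless $x^{2}e_{2}=e_{1}\neq 0$ while $\chi_{x}=t^{3}$, so $x$ is \emph{regular} nilpotent. Thus $\gcd\neq 1$ does not imply $x$ non-regular, and your stated ``contrapositive'' argument for the last step collapses. (Only the one direction $\gcd=1\Rightarrow x$ regular is actually needed, and it may well hold, but your justification for it is the broken equivalence.)

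There is a much cleaner way to close this last gap that sidesteps the block-matrix analysis entirely and is more in the spirit of the rest of the paper. From $(2)$, the span $W:=\lspan\{(\xifij)_x:\,i\le n-1,\,j\le i\}\subseteq T_{x}(G\cdot x)$ has dimension exactly $\dn$. Since the $f_{i,j}$ Poisson commute, $\omega_{KKS}\bigl((\xifij)_x,(\xi_{f_{k,l}})_x\bigr)=\{f_{i,j},f_{k,l}\}(x)=0$, so $W$ is isotropic in the symplectic vector space $T_{x}(G\cdot x)$. Therefore $\dn=\dim W\le\tfrac12\dim(G\cdot x)\le\tfrac12(n^{2}-n)=\dn$, which forces $\dim(G\cdot x)=n^{2}-n$, i.e.\ $x$ is regular. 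This one-line symplectic argument replaces the $\gcd$ computation and repairs the only genuine gap in your proposal.
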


To see that the restriction of the functions $J_{GZ}$ to a regular adjoint orbit $G\cdot x$ form an integrable system, we first observe that $G\cdot x\cap\fg_{sreg}\neq\emptyset$ for any regular $x$.  This follows from the fact that any regular matrix is conjugate to a companion matrix, which is Hessenberg and therefore strongly regular.  Note that the functions $f_{n,1},\dots, f_{n,n}$ restrict to constant functions on $G\cdot x$, so we only consider the restrictions of $\{f_{i,j}: \, i=1,\dots, n-1, \, j=1,\dots, i\}$.  Let $q_{i,j}=f_{i,j}|_{G\cdot x}$ for $i=1,\dots, n-1$ , $j=1,\dots, i$ and let $U=G\cdot x\cap\fg_{sreg}$.  Then $U$ is open and dense in $G\cdot x$.  By Equation (\ref{eq:compat}), Part (2) of Proposition \ref{p:sreg} and Proposition \ref{p:GZalg} imply respectively that the functions $\{q_{i,j}:\, i=1,\dots, n-1, \, j=1,\dots i\}$ are independent and Poisson commute on $U$.  Observe that there are 
$$\sum_{i=1}^{n-1} i=\frac{n(n-1)}{2}=\frac{\dim(G\cdot x)}{2}$$
such functions.  Hence, they form an integrable system on regular $G\cdot x$.



   

  It follows from our work in Section \ref{ss:intsys} that the connected components of the regular level sets of the moment map $y\to  (q_{1,1}(y),\dots, q_{i,j}(y), \dots, q_{n-1,n-1}(y))$ are the leaves of a polarization of $G\cdot x\cap\fg_{sreg}$.  It is easy to see that such regular level sets coincide with certain strongly regular fibers of the Kostant-Wallach map, namely the fibers $\Phi^{-1}(c)_{sreg}$ where $c=(c_{1},\dots, c_{n})$, $c_{i}\in\C^{i}$ with $c_{n}=\Phi_{n}(x)$ (see Equation (\ref{eq:KWmap})).  This follows from Proposition \ref{p:sreg} and the fact that regular matrices which have the same characteristic polynomial are conjugate (see Remark \ref{rem_git}).  
  
  
  We therefore turn our attention to studying the geometry of the strongly regular set $\fg_{sreg}$ and Lagrangian submanifolds $\Phi^{-1}(c)_{sreg}$ of regular $G\cdot x$.  

\begin{rem}\label{r:GS}
The Gelfand-Zeitlin system described here can be viewed as a complexification of the one introduced by Guillemin and Sternberg \cite{GS} on the dual to the Lie algebra of the unitary group.  They show that the Gelfand-Zeitlin integrable system on $\mathfrak{u}(n)^{*}$ is a geometric version of the classical Gelfand-Zeitlin basis for irreducible representations of $U(n)$, \cite{GZ1}.  More precisely, they construct a geometric quantization of a regular, integral coadjoint orbit of $U(n)$ on $\mathfrak{u}(n)^{*}$ using the polarization from the Gelfand-Zeitlin integrable system and show that the resulting quantization is isomorphic to the corresponding highest weight module for $U(n)$ using the Gelfand-Zeitlin basis for the module.   


There is strong empirical evidence (see \cite{PFut}) that the quantum version of the complexified Gelfand-Zeitlin system is the category of  Gelfand-Zeitlin modules studied by Drozd, Futorny, and Ovsienko \cite{DFO}.  These are Harish-Chandra modules for the pair $(U(\fg),\Gamma)$, where $\Gamma\subset U(\fg)$ is the Gelfand-Zeitlin subalgbera of the universal enveloping algbera $U(\fg)$ \cite{FO2}.  It would be interesting to produce such modules geometrically using the geometry of the complex Gelfand-Zeitlin system developed below and deformation quantization.
\end{rem}


\subsection{Integration of the Gelfand-Zeitlin system and the group $A$} 

We can study the Gelfand-Zeitlin integrable system on $\fgl(n,\C)$ and the structure of the fibers $\Phi^{-1}(c)_{sreg}$ by integrating the corresponding Hamiltonian vector fields to a holomorphic action of $\C^{\dn}$ on $\fg$.  The first step is the following observation. 

\begin{thm}\label{thm:complete}
Let $f_{i,j}=tr(x_{i}^{j})$ for $i=1,\dots, n-1$, $j=1,\dots, i$.  Then the Hamiltonian vector field $\xi_{f_{i,j}}$ is complete on $\fg$ and integrates to a holomorphic action of $\C$ on $\fg$ whose orbits are given by: 
\begin{equation}\label{eq:action}
t_{i,j}\cdot x=\Ad(\exp(t_{i,j} jx_{i}^{j-1}))\cdot x,
\end{equation}
for $x\in\fg$, $t_{i,j}\in \C$.  
\end{thm}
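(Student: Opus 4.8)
The plan is to compute the Hamiltonian vector field $\xi_{f_{i,j}}$ explicitly using the formula for Hamiltonian vector fields on $\fg$ with the Lie-Poisson structure, namely Equation (\ref{eq:Ham}), and then to observe that the resulting vector field is linear in $x$ along each orbit in a way that makes completeness and the integrated action transparent. First I would compute $\nabla f_{i,j}(x)$. Since $f_{i,j}(x) = tr(x_i^j) = tr((\pi_i(x))^j)$ and $\pi_i$ is the projection onto $\fg_i$ along $\fg_i^\perp$, for $z \in \fg$ we have $\dt f_{i,j}(x+tz) = j\, tr(x_i^{j-1} z_i) = j\, tr(x_i^{j-1} \pi_i(z))$. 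Because $x_i^{j-1} \in \fg_i$ and $\fg = \fg_i \oplus \fg_i^\perp$ with $\fg_i^\perp$ orthogonal to $\fg_i$, this equals $j\, tr(x_i^{j-1} z) = \beta(j x_i^{j-1}, z)$. Hence by Equation (\ref{eq:differential}), $\nabla f_{i,j}(x) = j x_i^{j-1}$, regarded as an element of $\fg_i \subset \fg$. By Equation (\ref{eq:Ham}), $(\xi_{f_{i,j}})_x = [x, j x_i^{j-1}]$.

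Next I would verify that the curve $\gamma(t) = \Ad(\exp(t j x_i^{j-1})) \cdot x$ is the integral curve of $\xi_{f_{i,j}}$ through $x$. The key point is that $x_i^{j-1}$ is \emph{constant along this curve}: differentiating $\gamma(t)_i = \pi_i(\gamma(t))$, and using that conjugation by $\exp(t j x_i^{j-1}) \in G_i$ is $\pi_i$-equivariant (this is exactly the equivariance of $\pi_i$ under $G_i$-conjugation emphasized in the introduction), one gets $\gamma(t)_i = \Ad(\exp(t j x_i^{j-1})) \cdot x_i$, and since $x_i$ commutes with $x_i^{j-1}$, this is just $x_i$ for all $t$. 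Therefore $\gamma'(t) = [j x_i^{j-1}, \gamma(t)]$... wait, sign: $\frac{d}{dt}\Ad(\exp(tY))w = [Y, \Ad(\exp(tY))w]$, and comparing with $(\xi_{f_{i,j}})_{\gamma(t)} = [\gamma(t), j x_i^{j-1}] = [\gamma(t)_i{}^{?}\ldots]$ — one must match $[\gamma(t), j x_i^{j-1}]$ against $[j x_i^{j-1}, \gamma(t)]$, so I would be careful to absorb the sign by using $\exp(-t j x_i^{j-1})$ or by noting the convention in Equation (\ref{eq:Ham}); in any case the curve solves the ODE, and since it is defined for all $t \in \C$, the vector field is complete.

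Finally, I would note that the assignment $t_{i,j} \mapsto \Ad(\exp(t_{i,j} j x_i^{j-1}))$ is a group homomorphism from $\C$ precisely because $x_i^{j-1}$ is constant along the flow, so $\exp(s j x_i^{j-1})\exp(t j x_i^{j-1}) = \exp((s+t) j x_i^{j-1})$ and the flow composes correctly; holomorphy in $(t_{i,j}, x)$ is clear from the formula. The main obstacle is really just bookkeeping: getting the sign convention in Equation (\ref{eq:Ham}) consistent with the Lie-derivative/flow convention, and cleanly justifying that $x_i^{j-1}$ does not change along the orbit — the latter is the one genuinely structural fact, and it rests on the $G_i$-equivariance of the projection $\pi_i$ together with $[x_i, x_i^{j-1}] = 0$. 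Everything else is a direct substitution into (\ref{eq:differential}) and (\ref{eq:Ham}).
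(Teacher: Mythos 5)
Your proposal is correct and follows essentially the same route as the paper: compute $\nabla f_{i,j}(x)=jx_i^{j-1}$, use the fact that $\exp(tjx_i^{j-1})\in G_i$ centralizes $x_i$ (via the $G_i$-equivariance of $\pi_i$) to see that $x_i$ is constant along the curve, and match against Equation (\ref{eq:Ham}). The paper resolves the sign point you flagged exactly as you suggest, by noting that $\theta(-t_{i,j},x)$ is the integral curve of $\xi_{f_{i,j}}$, which suffices since the theorem asserts only that the orbits agree.
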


\begin{proof} 
Denote the right side of Equation (\ref{eq:action}) by $\theta(t_{i,j}, x)$.  We show that $\theta^{\prime}(t_{i,j}, x)=(-\xifij)_{\theta(t_{i,j}, x)}$ for any $t_{i,j}\in \C$, so that $\theta(-t_{i,j}, x)$ is an integral curve of the vector field $\xifij$.  For the purposes of this computation, replace the variable $t_{i,j}$ by the variable $t$.  Then 
\begin{equation*}
\begin{split}
\frac{d}{dt} |_{t=t_{0}} \,\Ad(
\exp(t\, jx_{i}^{j-1}))\cdot x  &=\ad(jx_{i}^{j-1})\cdot \Ad(
\exp(t_{0}\, jx_{i}^{j-1}))\cdot x\\
&=\ad(j x_{i}^{j-1})\cdot \theta(t_{0}, x).
\end{split}
\end{equation*}
Clearly, $\exp(t_{0}jx_{i}^{j-1})$ centralizes $x_{i}$, so that $\theta(t_{0},\,x)_{i}=x_{i}$.  This implies 
$$
\ad(jx_{i}^{j-1})\cdot\theta(t_{0},x)=\ad(j(\theta(t_{0}, x)_{i})^{j-1})\cdot \theta(t_{0}, x).
$$  
Now it is easily computed that $\nabla f_{i,j}(y)=jy_{i}^{j-1}$ for any $y\in\fg$.  Thus, Equation (\ref{eq:Ham}) implies that 
$$
\ad(j(\theta(t_{0}, x)_{i})^{j-1})\cdot \theta(t_{0}, x)=-(\xifij)_{\theta(t_{0},x)}.
$$

\end{proof} 

We now consider the Lie algbera of Gelfand-Zeitlin vector fields 
\begin{equation}\label{eq:GZvecfields}
\fa:=\mbox{span}\{ \xi_{f_{i,j}}: i=1,\dots, n-1,\, j=1,\dots, i\}.
\end{equation}

By Equation (\ref{eq:bracket}), $\fa$ is an abelian Lie algebra, and since $\fg_{sreg}$ is non-empty, $\dim\fa=\dn$, by (2) of Proposition \ref{p:sreg}.  Let $A$ be the corresponding simply connected Lie group, so that $A\cong \C^{\dn}$.  We take as coordinates on $A$, $$\underline{t}=(\underline{t}_{1},\dots, \underline{t}_{i},\dots, \underline{t}_{n-1})\in\C\times \dots\times\C^{i}\times\dots\times \C^{n-1}=\C^{{n\choose 2}},$$ where $\underline{t}_{i}\in\C^{i}$ with $\underline{t}_{i}=(t_{i1},\dots, t_{ii})$, with $t_{ij}\in \C$ for $i=1,\dots, n-1$, $j=1,\dots, i$.  Since $\fa$ is abelian the actions of the various $t_{i,j}$ given in Equation (\ref{eq:action}) commute.  Thus, we can define an action of $A$ on $\fg$ by composing the actions of the various $t_{i,j}$ in any order.  Thus, for $a=(\underline{t}_{1},\dots, \underline{t}_{n-1})\in A$, $a\cdot x$ is given by the formula:
\begin{equation}\label{eq:Aaction}
a\cdot x=\Ad(\exp(t_{1,1}))\cdot\ldots\cdot\Ad(\exp(jt_{i,j}x_{i}^{j-1}))\cdot\ldots\cdot\Ad(\exp((n-1)t_{n-1,n-1} x_{n-1}^{n-2}))\cdot x.\end{equation}
Theorem \ref{thm:complete} shows that this action integrates the action of $\fa$ on $\fg$, so that
\begin{equation}\label{eq:Atan}
T_{x}(A\cdot x)=\mbox{span}\{( \xifij)_{x}: i=1,\dots, n-1, \,j=1,\dots, i\}.
\end{equation}
Since the functions $J_{GZ}$ Poisson commute, it follows from Equation (\ref{eq:compat}) that $A\cdot x\subset G\cdot x$ is isotropic with respect to the KKS symplectic structure on $G\cdot x$.  Note also that Equation (\ref{eq:Ham1}) implies that $\xi_{f_{i,j}} f_{k,l}=0$ for any $i,\, j$ and $k,\, l$.  It follows that $f_{k,l}$ is invariant under the flow of $\xi_{f_{i,j}}$ for any $i,j$ and therefore is invariant under the action of $A$ given in Equation (\ref{eq:Aaction}).  Thus, the action of $A$ preserves the fibers of the Kostant-Wallach map $\Phi$ defined in Equation (\ref{eq:KWmap}). 

 It follows from Equation (\ref{eq:Atan}) and Part (2) of Proposition \ref{p:sreg} that $x\in\fg_{sreg}$ if and only if $\dim(A\cdot x)=\dn$, which holds if and only if $A\cdot x\subset G\cdot x$ is Lagrangian in regular $G\cdot x$.  Thus, the group $A$ acts on the strongly regular fibers $\Phi^{-1}(c)_{sreg}$ and its orbits form the connected components of the Lagrangian submanifold $\Phi^{-1}(c)_{sreg}\subset G\cdot x$.  Moreover, there are only finitely many $A$-orbits in $\Phi^{-1}(c)_{sreg}$. 


\begin{thm}\label{thm:irred} (\cite{KW1}, Theorem 3.12)
Let $c\in\C^{\dnone}$ and let $\Phi^{-1}(c)_{sreg}$ be a strongly regular fiber of the Kostant-Wallach map.  Then $\Phi^{-1}(c)_{sreg}$ is a smooth algebraic variety of dimension $\dn$ whose irreducible components in the Zariski topology coincide with the orbits of $A$ on $\Phi^{-1}(c)_{sreg}.$  
\end{thm}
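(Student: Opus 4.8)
The plan is to deduce everything from the submersion property of the Kostant-Wallach map $\Phi$ on $\fg_{sreg}$ together with a dimension count, and then to pass from the resulting complex-analytic statement to the Zariski-topological one using smoothness. First I would record that $\Phi^{-1}(c)_{sreg}$ is a smooth variety of pure dimension $\dn$. By Definition \ref{d:sreg}, $x$ is strongly regular precisely when the $\dnone$ differentials $(df_{i,j})_{x}$ are linearly independent, that is, precisely when $d\Phi_{x}$ is surjective; hence $\Phi$ restricts to a submersion $\fg_{sreg}\to\C^{\dnone}$ of smooth varieties, and every nonempty fiber $\Phi^{-1}(c)_{sreg}=\Phi^{-1}(c)\cap\fg_{sreg}$ is smooth of codimension $\dnone$ in $\fg$, hence of dimension $\dim\fg-\dnone=n^{2}-\dnone=\dn$. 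Nonemptiness for every $c$ follows from Theorem \ref{thm:Hess}, since $e+\fb\subset\fg_{sreg}$ and $\Phi|_{e+\fb}$ maps onto $\C^{\dnone}$; and as the intersection of the Zariski-closed set $\Phi^{-1}(c)$ with the Zariski-open set $\fg_{sreg}$, the fiber is a genuine (smooth, a priori reducible) algebraic variety.

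Next I would show that every $A$-orbit in $\Phi^{-1}(c)_{sreg}$ is open. The $A$-action preserves $\Phi^{-1}(c)_{sreg}$: it preserves each fiber of $\Phi$ because the functions $f_{k,l}$ are constant along the Gelfand-Zeitlin flows (as noted after \eqref{eq:Aaction}), and it preserves $\fg_{sreg}$ because, by \eqref{eq:Atan} and Proposition \ref{p:sreg}(2), strong regularity of $x$ is equivalent to $\dim(A\cdot x)=\dn$, a condition manifestly invariant under $A$. Now fix $x\in\Phi^{-1}(c)_{sreg}$. The orbit $A\cdot x$ is an immersed submanifold of $\Phi^{-1}(c)_{sreg}$ of dimension $\dim T_{x}(A\cdot x)=\dn$, the value $\dn$ coming from \eqref{eq:Atan} together with Proposition \ref{p:sreg}(2); since $\Phi^{-1}(c)_{sreg}$ is a smooth manifold of that same dimension $\dn$, the orbit $A\cdot x$ is open in it in the Hausdorff topology. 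Moreover $A\cdot x$ is connected, because $A\cong\C^{\dn}$ is.

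Finally I would assemble these facts. The $A$-orbits partition $\Phi^{-1}(c)_{sreg}$ into pairwise disjoint Hausdorff-open subsets, so each orbit is also Hausdorff-closed, and being connected, each orbit is exactly one Hausdorff-connected component of $\Phi^{-1}(c)_{sreg}$; in particular there are only finitely many orbits, since a variety of finite type has finitely many components. On the other hand, since $\Phi^{-1}(c)_{sreg}$ is smooth it is the disjoint union of its irreducible components, each of which is Zariski-open and Zariski-closed and, being irreducible over $\C$, connected in the Hausdorff topology; therefore the Hausdorff-connected components of $\Phi^{-1}(c)_{sreg}$ are precisely its Zariski-irreducible components. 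Comparing the two descriptions, the irreducible components of $\Phi^{-1}(c)_{sreg}$ coincide with the $A$-orbits, which is the assertion.

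The only step carrying real content is the dimension matching $\dim(A\cdot x)=\dim\Phi^{-1}(c)_{sreg}=\dn$, which is what forces the orbits to be open; everything downstream is the soft comparison of the analytic and Zariski topologies on a smooth variety. Both ingredients rest on Theorem \ref{thm:Hess} and Proposition \ref{p:sreg}, so I do not expect a genuinely new obstacle in carrying this out.
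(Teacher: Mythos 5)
Your proof is correct and follows essentially the same path that the paper sketches in the discussion immediately preceding the theorem: $A$ preserves $\Phi^{-1}(c)_{sreg}$, each $A$-orbit has dimension $\dn$ by (\ref{eq:Atan}) and Proposition \ref{p:sreg}(2), so orbits are open in the $\dn$-dimensional smooth fiber, hence are its Hausdorff-connected components, which for a smooth complex variety are the irreducible components. The submersion argument for smoothness and dimension, and the soft passage from analytic connected components to Zariski irreducible components, are exactly the expected ingredients.
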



Theorem \ref{thm:irred} says that the leaves of the polarization of a regular
adjoint orbit $G\cdot x$ constructed from the Gelfand-Zeitlin integrable system are exactly the $A$-orbits on $G\cdot x\cap\fg_{sreg}$.  

\begin{rem}
Our definition of the Gelfand-Zeitlin integrable system involved choosing the specific set of algebraically independent generators $J_{GZ}$ for the algebra $J(n)$ in Equation (\ref{eq:tensor}).  However, it can be shown that if we choose another algebraically independent set of generators, $J_{GZ}^{\prime}$, then their restriction to each regular adjoint orbit $G\cdot x$ forms an integrable system, and the corresponding Hamiltonian vector fields are complete and integrate to an action of a holomorphic Lie group $A^{\prime}$ whose orbits coincide with those of $A$, \cite{KW1}, Theorem 3.5.  Our particular choice of generators $J_{GZ}$ is to facilitate the easy integration of the Hamiltonian vector fields $\xi_{f}$, $f\in J_{GZ}$ in Theorem \ref{thm:complete}.  

\end{rem}


\subsection{Analysis of the $A$-action on $\Phi^{-1}(c)_{sreg}$}


Kostant and Wallach \cite{KW1} studied the action of $A$ on a special set of regular semisimple elements in $\fg$ defined by: 
\begin{equation}\label{eq:omega}
\fg_{\Omega}=\{x\in\fg: x_{i} \mbox{ is regular semisimple and } \sigma_{i}(x_{i})\cap\sigma_{i+1}(x_{i+1})=\emptyset \mbox{ for all } i\}. 
\end{equation}
Let $\Omega=\Phi(\fg_{\Omega})\subset\C^{\dnone}$.  By Remark \ref{rem_git}, we have $\fg_{\Omega}=\Phi^{-1}(\Omega)$.  In \cite{KW1}, the authors show that the action of $A$ is transitive on the fibers $\Phi^{-1}(c)$ for $c\in\Omega$ and that these fibers are $\dn$-dimensional tori. 
\begin{thm}\label{thm:omega}(\cite{KW1}, Theorems 3.23 and 3.28) 
The elements of $\fg_{\Omega}$ are strongly regular, so that $\Phi^{-1}(c)=\Phi^{-1}(c)_{sreg}$ for $c\in\Omega$.  Moreover, $\Phi^{-1}(c)$ is a homogenuous space for a free, algebraic action of the torus $(\C^{\times})^{\dn}$ and therefore is precisely one $A$-orbit.  
\end{thm}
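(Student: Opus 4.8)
The plan is to prove the two assertions separately: first that $\fg_{\Omega}\subseteq\fg_{sreg}$, and then that each fiber $\Phi^{-1}(c)$ with $c\in\Omega$ is a single free orbit of a torus of rank $\dn$. For the first assertion, fix $x\in\fg_{\Omega}$; since every $x_{i}$ is regular semisimple it is regular, so by Proposition~\ref{p:sreg}(3) the only point to verify is $\fz_{\fg_{i}}(x_{i})\cap\fz_{\fg_{i+1}}(x_{i+1})=0$ for $i=1,\dots,n-1$. As $x_{i+1}$ is regular semisimple, $\fz_{\fg_{i+1}}(x_{i+1})=\C[x_{i+1}]$, so it suffices to show $\C[x_{i+1}]\cap\fg_{i}=0$. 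If $z=p(x_{i+1})\in\fg_{i}$, then $z$ kills the last standard coordinate vector $u\in\C^{i+1}$, so it is enough to know that $u$ is a cyclic vector for $x_{i+1}$: then $p(x_{i+1})$ annihilates the basis $u,x_{i+1}u,\dots,x_{i+1}^{i}u$ and so vanishes. I would obtain cyclicity by contradiction: a nonzero left eigenvector $\phi$ of $x_{i+1}$ with $\phi(u)=0$ necessarily has the form $\phi=(\psi,0)$ with $\psi\ne 0$, and comparing the first $i$ coordinates in $\phi x_{i+1}=\mu\phi$ gives $\psi x_{i}=\mu\psi$, so that $\mu\in\sigma_{i}(x_{i})\cap\sigma_{i+1}(x_{i+1})=\emptyset$. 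Hence $\fg_{\Omega}\subseteq\fg_{sreg}$, so $\Phi^{-1}(c)=\Phi^{-1}(c)_{sreg}$ for $c\in\Omega$; and since $\fg_{\Omega}=\Phi^{-1}(\Omega)$, every point of such a fiber lies in $\fg_{\Omega}$.

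For the second assertion, fix $c\in\Omega$, which determines the eigenvalue sets $\Lambda_{i}=\sigma_{i}(x_{i})$, each of cardinality $i$ and with $\Lambda_{i}\cap\Lambda_{i+1}=\emptyset$. The first step is to repackage the $A$-action on $\fg_{\Omega}$ as a torus action. For $x\in\fg_{\Omega}$, the $i$-th layer of the $A$-action sends $x$ to $\Ad\big(\exp(\sum_{j=1}^{i}jt_{i,j}x_{i}^{j-1})\big)\cdot x$; since $\{jx_{i}^{j-1}:1\le j\le i\}$ is a basis of $\fz_{\fg_{i}}(x_{i})=\C[x_{i}]$ and $\exp$ carries this space onto the torus $Z_{G_{i}}(x_{i})\cong(\C^{\times})^{i}$, the layer's image is all of $Z_{G_{i}}(x_{i})$; and because $\exp(\sum_{j}jt_{i,j}x_{i}^{j-1})$ acts on the $\lambda$-eigenline of $x_{i}$ by the scalar $\exp(\sum_{j}jt_{i,j}\lambda^{j-1})$, this layer factors through the surjection $\C^{i}\twoheadrightarrow(\C^{\times})^{\Lambda_{i}}$, $(t_{i,j})_{j}\mapsto\big(\exp(\sum_{j}jt_{i,j}\lambda^{j-1})\big)_{\lambda\in\Lambda_{i}}$, which is independent of $x$ (a Vandermonde-type determinant shows it is onto) and whose kernel acts by $\Ad$ of the identity matrix, hence trivially. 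I would then conclude that the $A$-action descends to an action with the same orbits of the torus $T:=\prod_{i=1}^{n-1}(\C^{\times})^{\Lambda_{i}}\cong(\C^{\times})^{\dn}$, in which $g=(g_{i})_{i}$ conjugates $x$ by the element $\hat g_{i}\in Z_{G_{i}}(x_{i})$ whose eigenvalues on the eigenlines of $x_{i}$ are the coordinates of $g_{i}$.

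Finally, I would prove by induction on $n$ that $\Phi^{-1}(c)=T\cdot x$ is a single free $T$-orbit, the case $n=1$ being trivial. For the inductive step, consider $\pi_{n-1}\colon\Phi^{-1}(c)\to(\Phi')^{-1}(c')$, where $\Phi'$ is the Kostant--Wallach map of $\fgl(n-1,\C)$ and $c'=(c_{1},\dots,c_{n-1})\in\Omega$, so that the target is a single free orbit of $T'=\prod_{i\le n-2}(\C^{\times})^{\Lambda_{i}}$ by induction. For $y$ in the target, writing $x=\left(\begin{smallmatrix}y&b\\ c^{T}&d\end{smallmatrix}\right)$ and expanding $\det(tI-x)=\det(tI-y)\big[(t-d)-c^{T}(tI-y)^{-1}b\big]$ in the eigenbasis of $y$, the equations describing $\pi_{n-1}^{-1}(y)\cap\Phi^{-1}(c)$ say that $d$ is a fixed constant and that, for each eigenvalue $\lambda$ of $y$, the product of the $\lambda$-eigencomponents of $b$ and of $c$ equals $-\prod_{\mu\in\Lambda_{n}}(\lambda-\mu)\big/\prod_{\lambda'\in\Lambda_{n-1}\setminus\{\lambda\}}(\lambda-\lambda')$, which is nonzero precisely because $\Lambda_{n-1}\cap\Lambda_{n}=\emptyset$; hence this fiber is a copy of $(\C^{\times})^{n-1}$ on which $Z_{G_{n-1}}(y)\cong(\C^{\times})^{\Lambda_{n-1}}$ acts simply transitively by opposite rescalings of those eigencomponents. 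Since the subtorus $T'\times\{1\}\subseteq T$ acts on $\Phi^{-1}(c)$ compatibly through $\pi_{n-1}$ with the $T'$-action on the base, transitivity of $T$ on $\Phi^{-1}(c)$ follows from transitivity on the base together with transitivity on the fibers. Freeness follows by the same descent: if $g=(g^{\mathrm{low}},g_{n-1})\in T$ fixes $x$, then $g^{\mathrm{low}}$ fixes $x_{n-1}$, hence is trivial by induction, and the remaining $\hat g_{n-1}\in Z_{G_{n-1}}(x_{n-1})\cap Z_{G}(x_{n})$ fixes $u$ and therefore equals the identity since $u$ is cyclic for $x_{n}$ (the argument of the first paragraph with $i+1=n$). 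This exhibits $\Phi^{-1}(c)$ as a free, transitive, algebraic $(\C^{\times})^{\dn}$-space whose orbit structure agrees with that of $A$; alternatively, with Theorem~\ref{thm:irred} in hand one could replace part of the fiber bookkeeping by a dimension count.

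The step I expect to be the main obstacle is making the repackaging of the second paragraph precise: one must verify that the vector-group action of $A\cong\C^{\dn}$ really descends to a \emph{free} action of the multiplicative torus $T$, i.e. that the varying centralizers $Z_{G_{i}}(x_{i})$ can be identified with the fixed tori $(\C^{\times})^{\Lambda_{i}}$ algebraically and independently of $x$, and that the stabilizer lattices match on the nose. This, together with the nonvanishing of the eigencomponent products and the cyclicity of $u$, is exactly where the disjointness hypotheses defining $\Omega$ are used; without them the products can vanish, the fibers of $\pi_{n-1}$ acquire positive-dimensional stabilizers, and both freeness and the toral structure break down.
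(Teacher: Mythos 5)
Your argument is correct, and it follows the route the paper itself sketches immediately after the theorem statement (the unpublished Wallach strategy, generalized in \cite{Col1} and stated here as Theorem~\ref{thm:theta}): replace the vector group $A$ by a product of centralizers $Z_{G_i}(x_i)$, identified via eigenvalue coordinates with a fixed torus, and show that this torus acts freely and transitively on $\Phi^{-1}(c)$. Since the paper only cites \cite{KW1} and gives no proof of Theorem~\ref{thm:omega}, the fair comparison is with that sketch, and your proposal agrees with it.

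Two specific observations. First, your cyclic-vector argument for $\fg_{\Omega}\subseteq\fg_{sreg}$ is clean and correct: since $x_{i+1}$ is regular, $\fz_{\fg_{i+1}}(x_{i+1})=\C[x_{i+1}]$, and a nonzero $p(x_{i+1})\in\fg_{i}$ would annihilate the cyclic vector $e_{i+1}$ and hence vanish; cyclicity fails only if some left eigenvector $\phi=(\psi,0)$ of $x_{i+1}$ annihilates $e_{i+1}$, and $\psi x_i=\mu\psi$ then produces a common eigenvalue $\mu\in\sigma_i(x_i)\cap\sigma_{i+1}(x_{i+1})$, contradicting the definition of $\fg_\Omega$. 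This argument only uses that $x_{i+1}$ has distinct eigenvalues, exactly as needed. Second, you rightly flag the one place requiring care: verifying that the reparametrized action is an honest algebraic action of the torus $T=\prod_i(\C^\times)^{\Lambda_i}$. The clean way to package this is via the eigenprojections $P_\lambda(x_i)=\prod_{\lambda'\neq\lambda}(x_i-\lambda')/(\lambda-\lambda')$, which are morphisms on $\Phi^{-1}(c)$; setting $\hat g_i(t,x)=\sum_{\lambda\in\Lambda_i}t_\lambda P_\lambda(x_i)$ makes the action visibly algebraic, and since $\exp\colon\C[x_i]\to Z_{G_i}(x_i)$ is surjective onto the torus (your Vandermonde observation), the $T$-orbits coincide with the $A$-orbits. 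Your inductive fibration argument for transitivity and freeness — expanding $\det(tI-x)=\det(tI-y)\bigl[(t-d)-c^T(tI-y)^{-1}b\bigr]$, solving for the nonzero products of eigencomponents of $b$ and $c$, and using $G_{n-1}$-equivariance of $\pi_{n-1}$ to descend the problem to $\fgl(n-1,\C)$ — is also sound, and the disjointness $\Lambda_{n-1}\cap\Lambda_n=\emptyset$ enters exactly where you say it does, to guarantee the fiber is a torsor under $Z_{G_{n-1}}(y)$ rather than a degenerate affine space. As you note, once transitivity and the agreement of orbits with $A$-orbits are established, freeness can alternatively be deduced from the dimension count $\dim\Phi^{-1}(c)_{sreg}=\dn$ of Theorem~\ref{thm:irred}.
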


\begin{rem}
An analogous Gelfand-Zeitlin integrable system exists for complex orthogonal Lie algberas $\mathfrak{so}(n,\C)$.  One can also show that this system integrates to a holomorphic action of $\C^{d}$ on $\mathfrak{so}(n,\C)$, where $d$ is half the dimension of a regular adjoint orbit in $\mathfrak{so}(n,\C)$.  One can then prove the analogue of Theorem \ref{thm:omega} for $\mathfrak{so}(n,\C)$.  We refer the reader to \cite{Col2} for details.     
\end{rem}

The thesis of the first author generalizes Theorem \ref{thm:omega} to an arbitrary fiber $\Phi^{-1}(c)_{sreg}$ for $c\in\C^{\dnone}$ (see \cite{Col}).  The methods used differ from those used to prove Theorem \ref{thm:omega}, but the idea originates in some unpublished work of Wallach, who used a similar strategy to describe the $A$-orbit structure of the set $\fg_{\Omega}$.  We briefly outline this strategy, which can be found in detail in \cite{Col1}, Section 4.  The key observation is that the vector field $\xifij$ acts via Equation (\ref{eq:action}) by the centralizer of $x_{i}$ in $G_{i}$, $Z_{G_{i}}(x_{i})$.  The problem is that the group $Z_{G_{i}}(x_{i})$ is difficult to describe for arbitrary $x_{i}$, so that the formula for the $A$-action in Equation (\ref{eq:Aaction}) is too difficult to use directly.  However, if $x\in\fg_{sreg}$ and $J_{i}$ is the Jordan canonical form of $x_{i}$, then the group $Z_{i}:=Z_{G_{i}}(J_{i})$ is easy to describe, since $x_{i}\in\fg_{i}$ is regular for $i=1,\dots, n$ by (3) of Proposition \ref{p:sreg}.  Further, for $x\in\Phi^{-1}(c)_{sreg}$, $x_{i}$ is in a fixed regular conjugacy class for $i=1,\dots, n$.  This allows us to construct morphisms, $\Phi^{-1}(c)_{sreg}\to G_{i}, \; x\to g_{i}(x)$, where $\Ad(g_{i}(x)^{-1})\cdot x=J_{i}$, where $J_{i}$ is a fixed Jordan matrix (depending only on $\Phi^{-1}(c)_{sreg}$).  We can then use these morphisms to define a free algebraic action of the group $Z:=Z_{1}\times \dots\times Z_{n-1}$ on $\Phi^{-1}(c)_{sreg}$ such that
the $Z$-orbits coincide with the  $A$-orbits.  The action of $Z$ is given by: 
\begin{equation}\label{eq:newact}
(z_{1},\dots, z_{n-1})\cdot x=\Ad(g_{1}(x)z_{1}g_{1}(x)^{-1})\cdot\ldots\cdot \Ad(g_{i}(x)z_{i}g_{i}(x)^{-1})\cdot\ldots\cdot \Ad(g_{n-1}(x)z_{n-1}g_{n-1}(x)^{-1})\cdot x,
\end{equation}
where $z_{i}\in Z_{i}$ for $i=1,\dots, n-1$ and $x\in\Phi^{-1}(c)_{sreg}$, (cf. Equation (\ref{eq:Aaction})).  

The action of the group $Z$ in Equation (\ref{eq:newact}) is much easier to work with than the action of $A$ in Equation (\ref{eq:Aaction}) and allows us to understand the structure of an arbitrary fiber $\Phi^{-1}(c)_{sreg}$.  The first observation is that we can enlarge the set of elements on which the action of $A$ is transitive on the fibers of the Kostant-Wallach map from the set $\fg_{\Omega}$ to the set $\fg_{\Theta}$ defined by:  
$$
\fg_{\Theta}=\{x\in\fg: \sigma_{i}(x_{i})\cap\sigma_{i+1}(x_{i+1})=\emptyset\}.
$$
Let $\Theta=\Phi(\fg_{\Theta})$.  Note that by Remark \ref{rem_git}, $\Phi^{-1}(\Theta)=\fg_{\Theta}$.  

\begin{thm}\label{thm:theta}(\cite{Col1}, Theorem 5.15)
The elements of $\fg_{\Theta}$ are strongly regular.   If $c\in\Theta$, then $\Phi^{-1}(c)=\Phi^{-1}(c)_{sreg}$ is a homogenous space for a free algebraic action of the group $Z=Z_{1}\times \dots \times Z_{n-1}$ given in Equation (\ref{eq:newact}), and thus is exactly one $A$-orbit.  Moreover, $\fg_{\Theta}$ is the maximal subset of $\fg$ for which the action of $A$ is transitive on the fibers of $\Phi$.  
\end{thm}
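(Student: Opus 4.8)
I would treat the three assertions in turn, the main tools being Proposition~\ref{p:sreg} (the characterizations of strong regularity) and Theorem~\ref{thm:irred}.

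\emph{Strong regularity.} Let $x\in\fg_{\Theta}$; I check condition~(3) of Proposition~\ref{p:sreg}. Each $x_i$ is regular: for $i\geq2$, writing $x_i$ in block form with upper-left corner $x_{i-1}$, any eigenvalue $\lambda$ of $x_i$ satisfies $\lambda\notin\sigma_{i-1}(x_{i-1})$, so $x_{i-1}-\lambda I$ is invertible, whence $\mathrm{rank}(x_i-\lambda I)\geq i-1$ and $\dim\ker(x_i-\lambda I)=1$; thus $x_i$ is regular ($x_1$ trivially so). Next fix $i\leq n-1$ and write $x_{i+1}=\left(\begin{smallmatrix}x_i&b\\ c^{T}&d\end{smallmatrix}\right)$ with $b,c\in\C^{i}$. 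Since $x_i$ is regular, $\fz_{\fg_i}(x_i)=\C[x_i]$, so an element of $\fz_{\fg_i}(x_i)\cap\fz_{\fg_{i+1}}(x_{i+1})$ has the form $\left(\begin{smallmatrix}W&0\\0&0\end{smallmatrix}\right)$ with $W\in\C[x_i]$, $Wb=0$ and $c^{T}W=0$; as $W$ commutes with $x_i$ and kills $b$, it annihilates the $x_i$-invariant subspace $\C[x_i]b$. But $\C[x_i]b=\C^{i}$: by the Hautus criterion it suffices that $\mathrm{rank}[\,\lambda I-x_i\mid b\,]=i$ for every $\lambda$, which is clear when $\lambda\notin\sigma_i(x_i)$ and, when $\lambda\in\sigma_i(x_i)$, follows from $\lambda\notin\sigma_{i+1}(x_{i+1})$: then $x_{i+1}-\lambda I$ is invertible, so deleting its last row leaves $[\,x_i-\lambda I\mid b\,]$, which hence has rank $\geq i$. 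Therefore $W=0$, and $x$ is strongly regular, so $\fg_{\Theta}\subseteq\fg_{sreg}$.

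\emph{The fibre for $c\in\Theta$: reduction to irreducibility and the induction.} Since $\Phi^{-1}(\Theta)=\fg_{\Theta}$, the previous step gives $\Phi^{-1}(c)=\Phi^{-1}(c)_{sreg}$, which by Theorem~\ref{thm:irred} is smooth of dimension $\dn$ with irreducible components the $A$-orbits; and by the construction recalled around Equation~(\ref{eq:newact}), $Z$ acts freely on it with orbits the $A$-orbits. So it suffices to prove $\Phi^{-1}(c)$ irreducible: it is then a single $A$-orbit, hence a single free $Z$-orbit. I argue by induction on $n$, the case $n=1$ being trivial. Put $c'=(c_1,\dots,c_{n-1})$; there is no collision among the first $n-1$ levels, so $c'$ lies in the analogue of $\Theta$ for $\fgl(n-1,\C)$, and by induction $(\Phi^{(n-1)})^{-1}(c')$ is smooth and irreducible of dimension ${n-1\choose2}$. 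Consider $\pi_{n-1}\colon\Phi^{-1}(c)\to(\Phi^{(n-1)})^{-1}(c')$, $x\mapsto x_{n-1}$. Fix $y$ in the target with characteristic polynomial $q$, let $p$ be the (fixed) characteristic polynomial of the elements of $\Phi^{-1}(c)$, and write $x=\left(\begin{smallmatrix}y&b\\ c^{T}&d\end{smallmatrix}\right)$; then $\Phi_n(x)=c_n$ reads $(t-d)q(t)-c^{T}\mathrm{adj}(tI-y)b=p(t)$, which forces $d$ (by comparing $t^{n-1}$-coefficients) and then reduces to $c^{T}\mathrm{adj}(tI-y)b=r(t)$ with $r$ fixed, $\deg r\leq n-2$, $r\equiv-p\pmod q$, so $\gcd(r,q)=\gcd(p,q)=1$ because $c\in\Theta$. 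Dividing by $q$, this says the transfer function $c^{T}(tI-y)^{-1}b$ equals $r/q$ in lowest terms, with denominator of degree $n-1$; since $y$ is regular with characteristic polynomial $q$, the set of such $(b,c)$ is exactly the set of minimal state-space realizations of $r/q$ with state matrix $y$, which is a single free orbit of $Z_{G_{n-1}}(y)=\C[y]\cap GL(n-1,\C)$, hence irreducible of dimension $n-1$. Thus $\pi_{n-1}$ is surjective with irreducible fibres of constant dimension $n-1$ over the irreducible base; all spaces being smooth (Theorem~\ref{thm:irred}), $\pi_{n-1}$ is flat, and a standard fibration argument gives that $\Phi^{-1}(c)$ is irreducible, of dimension ${n-1\choose2}+(n-1)={n\choose2}=\dn$. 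This closes the induction and proves the second assertion.

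\emph{Maximality.} Suppose $A$ acts transitively on $\Phi^{-1}(c)$. Since $\Phi^{-1}(c)$ meets the Hessenberg cross-section $e+\fb\subseteq\fg_{sreg}$ (Theorem~\ref{thm:Hess}) and $\fg_{sreg}$ is $A$-stable (at a point $x$ the tangent space to the $A$-orbit is $\mathrm{span}\{(\xi_{f_{i,j}})_x\}$, so by Proposition~\ref{p:sreg}(2) the orbit's dimension, hence strong regularity of its points, is constant along it), we get $\Phi^{-1}(c)=\Phi^{-1}(c)_{sreg}$, which is irreducible. I claim $c\in\Theta$. If not, take $i$ minimal with $\sigma_i(c_i)\cap\sigma_{i+1}(c_{i+1})\neq\emptyset$ and shared eigenvalue $\lambda$; then $c^{(i)}$ lies in the $\Theta$ of $\fgl(i,\C)$, so by the second assertion $(\Phi^{(i)})^{-1}(c^{(i)})$ is irreducible with all elements regular, and the analysis of $\pi_i$ above identifies the fibre of $(\Phi^{(i+1)})^{-1}(c^{(i+1)})\to(\Phi^{(i)})^{-1}(c^{(i)})$ over such a point $y$ with the set of realizations of $r/(\text{char.\ poly.\ of }y)$ with state matrix $y$ --- now \emph{non}-minimal, since $t-\lambda$ divides the $\gcd$ of the two characteristic polynomials --- and the set of non-minimal realizations with fixed state matrix is reducible (in fact disconnected: the obligatory mode-cancellation can be split between unreachability and unobservability in several ways). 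Hence $(\Phi^{(i+1)})^{-1}(c^{(i+1)})$ is reducible; transferring this back along the truncation $\Phi^{-1}(c)\to(\Phi^{(i+1)})^{-1}(c^{(i+1)})$ shows $\Phi^{-1}(c)$ is reducible or contains a non-strongly-regular element, contradicting $\Phi^{-1}(c)=\Phi^{-1}(c)_{sreg}$ irreducible. Therefore $c\in\Theta$, and $\fg_{\Theta}=\Phi^{-1}(\Theta)$ is the largest set on which $A$ acts transitively on fibres.

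I expect the maximality step to be the main obstacle. Its clean outline hides the bookkeeping needed to control the truncation maps --- surjectivity fails as soon as a corner becomes non-regular, so the collisions must be organized carefully, and in the non-surjective cases one must instead exhibit a non-strongly-regular element of the fibre --- and it rests on an explicit realization-theory lemma pinning down the components of the fibre of non-minimal realizations. The pivotal point in the irreducibility step is the dual statement: that the coprimality $\gcd(r,q)=1$ supplied by $c\in\Theta$ forces every realization appearing there to be minimal, so that each fibre of $\pi_{n-1}$ is a single $Z_{G_{n-1}}(y)$-orbit.
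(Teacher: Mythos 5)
Your treatment of the first assertion and of transitivity when $c\in\Theta$ is correct, and it takes a genuinely different route from the one sketched in the paper (and carried out in \cite{Col1}). The paper's strategy is to conjugate each $x_i$ to a fixed Jordan form $J_i$, transport the centralizer $Z_i=Z_{G_i}(J_i)$ back via the morphisms $g_i$, and check transitivity of the resulting $Z$-action from Equation (\ref{eq:newact}) in those coordinates. You instead verify condition (3) of Proposition \ref{p:sreg} directly via the Hautus controllability criterion, and prove transitivity by inductively fibering $\Phi^{-1}(c)$ over $(\Phi^{(n-1)})^{-1}(c')$, identifying each fiber via the state-space isomorphism theorem with a single free $Z_{G_{n-1}}(y)$-orbit, and concluding irreducibility --- hence a single $A$-orbit by Theorem \ref{thm:irred} --- by the standard flat/open fibration argument. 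The realization-theory dictionary is a real conceptual gain: the coprimality $\gcd(r,q)=1$, which is exactly the hypothesis $c\in\Theta$ at the top level, is what makes every realization in the fiber minimal, and minimality delivers freeness and transitivity of $Z_{G_{n-1}}(y)$ in one stroke, bypassing Jordan-form bookkeeping.

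The maximality step has a genuine gap, beyond the ``bookkeeping'' you flag. The inference from reducibility of the fibers of $(\Phi^{(i+1)})^{-1}(c^{(i+1)})\to(\Phi^{(i)})^{-1}(c^{(i)})$ to reducibility of the total space is not valid as stated: reducible fibers over an irreducible base can fuse into an irreducible total space (the conic $\{y^2=x\}$ over the $x$-line is the standard example), so you need an argument that the fiber components cannot glue as $y$ varies. Your ``transfer back'' along $\Phi^{-1}(c)\to(\Phi^{(i+1)})^{-1}(c^{(i+1)})$ is also not automatic: irreducibility of the source forces irreducibility of the image, not of the target, and nothing you established makes that map dominant onto every component. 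A smaller point: the parenthetical claim that the set of non-minimal realizations with fixed state matrix is disconnected is false --- already for $n=2$ this set is $\{bc=0\}$, which is connected though reducible (it becomes disconnected only after removing the non--strongly-regular locus, which may be what you meant). The shortest repair is to cite Theorem \ref{thm:general}, from the same source \cite{Col1}: if $c\notin\Theta$ there are $2^{j}\ge 2$ distinct $A$-orbits in $\Phi^{-1}(c)_{sreg}$, and since $\Phi^{-1}(c)_{sreg}$ is a nonempty (Theorem \ref{thm:Hess}) $A$-stable subset of $\Phi^{-1}(c)$, the latter cannot be a single $A$-orbit.
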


For general fibers the situation becomes more complicated.  

\begin{thm}\label{thm:general}(\cite{Col1}, Theorem 5.11) 
Let $x\in\fg_{sreg}$ be such that there are $j_{i}$ eigenvalues in common between $x_{i}$ and $x_{i+1}$ for $1\leq i\leq n-1$, and let $c=\Phi(x)$.  Then there are exactly $2^{j}$ $A$-orbits in $\Phi^{-1}(c)_{sreg}$, where $j=\sum_{i=1}^{n-1} j_{i}$.  The orbits of $A$ on $\Phi^{-1}(c)_{sreg}$ coincide with the orbits of a free algebraic action of the group $Z=Z_{1}\times\dots\times Z_{n-1}$ defined on $\Phi^{-1}(c)_{sreg}$ in Equation (\ref{eq:newact}). 
\end{thm}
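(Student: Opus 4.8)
The plan is to argue by induction on $n$, peeling off the last row and column via the projection $\pi_{n-1}\colon\fg\to\fg_{n-1}$ and reducing the orbit count to a computation along its fibers. The case $n=1$ is trivial: $\Phi^{-1}(c)$ is a point, $Z$ is trivial, and $j=0$. I would take for granted, from the discussion preceding the theorem and from \cite{Col1}, that the action of $Z$ in Equation (\ref{eq:newact}) on $\Phi^{-1}(c)_{sreg}$ is algebraic and free with the same orbits as $A$. The orbit comparison is transparent: since $g_i(x)Z_ig_i(x)^{-1}=Z_{G_i}(x_i)$ and since $\{\exp(t\,jx_i^{j-1}):t\in\C,\ 1\le j\le i\}$ generates the connected group $Z_{G_i}(x_i)$ for regular $x_i$, comparing (\ref{eq:Aaction}) with (\ref{eq:newact}) shows that the $A$-orbit and the $Z$-orbit through $x$ both equal $\{\Ad(h_1)\cdots\Ad(h_{n-1})\cdot x:h_i\in Z_{G_i}(x_i)\}$; freeness reduces, via Part (2) of Proposition \ref{p:sreg} and Equation (\ref{eq:Atan}) (which give $\dim(Z\cdot x)=\dn=\dim Z$ on $\fg_{sreg}$), to the triviality of stabilizers checked in \cite{Col1}. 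Thus the task is to count $Z$-orbits.

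For the inductive step I would write $c=(c',c_n)$ with $c_n\in\C^n$ and let $\Phi^{(n-1)}\colon\fg_{n-1}\to\C^{\dn}$ be the Kostant--Wallach map of $\fgl(n-1,\C)$. Transitivity of truncation gives $\Phi_i=\Phi_i\circ\pi_{n-1}$ for $i\le n-1$, and by Part (3) of Proposition \ref{p:sreg} the conditions defining $\fg_{sreg}$ restrict to those defining $(\fg_{n-1})_{sreg}$, so $\pi_{n-1}$ restricts to a morphism $p\colon\Phi^{-1}(c)_{sreg}\to(\Phi^{(n-1)})^{-1}(c')_{sreg}$. Choosing the normalizing morphisms $g_i$ of (\ref{eq:newact}) to factor through $\pi_{n-1}$ for $i\le n-2$, the $G_{n-1}$-equivariance of $\pi_{n-1}$ makes $p$ equivariant for $Z':=Z_1\times\cdots\times Z_{n-2}$, while $Z_{n-1}$ preserves every fiber of $p$ because $g_{n-1}(x)z_{n-1}g_{n-1}(x)^{-1}$ centralizes $x_{n-1}$. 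For a $Z'$-orbit $\mathcal O$ in the base --- a single $Z'$-torsor, by freeness --- one gets a $Z'$-equivariant bijection $p^{-1}(\mathcal O)\cong Z'\times p^{-1}(y_0)$ for $y_0\in\mathcal O$, under which $Z=Z'\times Z_{n-1}$ acts by left translation on $Z'$ and fiberwise on $p^{-1}(y_0)$; hence the $Z$-orbits in $p^{-1}(\mathcal O)$ biject with the $Z_{n-1}$-orbits in $p^{-1}(y_0)$. Since all base points lie in a single regular conjugacy class, all fibers of $p$ are isomorphic, and summing over the $2^{j'}$ orbits in the base (inductive hypothesis, $j'=\sum_{i=1}^{n-2}j_i$) gives $\#(\Phi^{-1}(c)_{sreg}/Z)=2^{j'}\cdot\#(p^{-1}(y_0)/Z_{n-1})$.

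What remains, and where the real work lies, is the local statement: for strongly regular $y_0\in\fg_{n-1}$ and $p_n$ the characteristic polynomial of the $n\times n$ matrices in the fiber, $Z_{n-1}=Z_{GL(n-1,\C)}(y_0)$ acts freely on $F:=p^{-1}(y_0)=\{x\in\fg_{sreg}:x_{n-1}=y_0,\ \det(tI-x)=p_n(t)\}$ with exactly $2^{j_{n-1}}$ orbits, $j_{n-1}$ being the number of eigenvalues common to $x_{n-1}$ and $x_n$. I would parametrize $x\in F$ by its last column $u$, last row $v^T$, and corner $a$; the trace relation fixes $a$, and after conjugating $y_0$ to its Jordan form $J=\bigoplus_\ell(\mu_\ell I+N_{d_\ell})$ (one block per eigenvalue, as $y_0$ is regular) the identity $\det(tI-x)=\det(tI-J)(t-a)-v^T\,\mathrm{adj}(tI-J)\,u$ turns the condition $\det(tI-x)=p_n(t)$ into a block-diagonal system of polynomial equations in the pairs $(u^{(\ell)},v^{(\ell)})$; meanwhile strong regularity amounts to $x$ being regular together with $\fz_{\fg_{n-1}}(J)\cap\fz_{\fg}(x)=0$, the remaining clauses of Proposition \ref{p:sreg}(3) being automatic from strong regularity of $y_0$. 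For a block $\ell$ whose eigenvalue $\mu_\ell$ is not a root of $p_n$, the equations couple $u^{(\ell)}$ and $v^{(\ell)}$ rigidly and the factor $Z_{GL(d_\ell)}(N_{d_\ell})$ of $Z_{n-1}$ is transitive on the solution set (contributing $1$ to the count); for a block $\ell$ with $\mu_\ell$ a common eigenvalue, the equations force $u^{(\ell)}=0$ or $v^{(\ell)}=0$, and strong regularity forces the surviving vector to be cyclic for $N_{d_\ell}$ --- a genuine binary choice, on each half of which that factor acts transitively (contributing $2$). Multiplying over blocks gives $\#(F/Z_{n-1})=2^{j_{n-1}}$, and substituting into the recursion yields $2^{j'+j_{n-1}}=2^{j}$, completing the induction.

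I expect the main obstacle to be the local statement when $y_0$ is not regular semisimple: one must expand $\mathrm{adj}(tI-J)$ and describe $\fz_{\fg_{n-1}}(J)$ block by block, prove that strong regularity is equivalent to the cyclicity conditions with no hidden extra constraint --- in particular that the $n\times n$ matrix so produced is automatically regular --- and show that $Z_{GL(d_\ell)}(N_{d_\ell})$, which now carries a unipotent part, acts transitively on each piece. This is precisely the technical heart carried out in \cite{Col,Col1}. A consistency check on the scheme: taking all $j_i=0$ must recover Theorems \ref{thm:omega} and \ref{thm:theta}, where $\Phi^{-1}(c)_{sreg}$ is a single $A$-orbit.
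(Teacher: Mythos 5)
Your proposal follows essentially the same strategy as the cited reference \cite{Col1}, whose proof the present expository paper only sketches: identify the $Z$-action of Equation (\ref{eq:newact}), match its orbits with the $A$-orbits by noting that each $Z_{G_i}(x_i)$ is the connected abelian group exhausted by $\{\exp(tjx_i^{j-1})\}$, and then count orbits inductively by fibering over $\pi_{n-1}$ and analyzing the Schur-complement conditions in the fiber. The only slight imprecision is the claim that the characteristic-polynomial equations alone force $u^{(\ell)}=0$ or $v^{(\ell)}=0$ at a shared eigenvalue---strong regularity is needed in conjunction, as you in fact use when requiring the surviving vector to be cyclic.
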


\begin{rem}\label{rem:bp}
After the proof of Theorem \ref{thm:general} was estabilshed in \cite{Col}, a similar result appeared in an interesting paper of Bielwaski and Pidstrygach \cite{BP}.  Their arguments are independent and completely different from ours.  It would be interesting to study the relation between the two different approaches to establishing the result of Theorem \ref{thm:general}. 

\end{rem}

We highlight a special case of Theorem \ref{thm:general}, which we will investigate in much greater detail below in Section \ref{s:Korbs}.  
\begin{cor}\label{c:nilp}
Consider the strongly regular nilfiber $\Phi^{-1}(0)_{sreg}:=\Phi^{-1}(0,\dots, 0)_{sreg}$.  Then there are exactly $2^{n-1}$ $A$-orbits in $\Phi^{-1}(0)_{sreg}$.  These orbits coincide with the orbits of a free algebraic action of $(\C^{\times})^{n-1}\times\C^{\dn-n+1}$ on $\Phi^{-1}(0)_{sreg}$.
\end{cor}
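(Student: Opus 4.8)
The plan is to derive this directly from Theorem \ref{thm:general} after computing the relevant invariants in the nilpotent case. First I would recall that $\Phi^{-1}(0)_{sreg}$ is nonempty by Theorem \ref{thm:Hess}, so Theorem \ref{thm:general} applies. If $x\in\Phi^{-1}(0)_{sreg}$, then by Remark \ref{rem_git} each $x_i\in\fg_i$ is nilpotent, and by part (3) of Proposition \ref{p:sreg} each $x_i$ is regular; hence each $x_i$ is a \emph{regular nilpotent} element of $\fg_i$, conjugate to the single Jordan block $J_i$ of size $i$. In particular $x_i$ and $x_{i+1}$ share exactly one eigenvalue, namely $0$, so in the notation of Theorem \ref{thm:general} we have $j_i=1$ for $i=1,\dots,n-1$ and therefore $j=\sum_{i=1}^{n-1}j_i=n-1$. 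Theorem \ref{thm:general} then gives at once that $\Phi^{-1}(0)_{sreg}$ is a union of exactly $2^{n-1}$ $A$-orbits, and that these coincide with the orbits of the free algebraic action of $Z=Z_1\times\cdots\times Z_{n-1}$ of Equation (\ref{eq:newact}), where now $Z_i=Z_{G_i}(J_i)$.

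It then remains to identify $Z$ explicitly. Since $J_i$ is a principal nilpotent in $\fgl(i,\C)$, its centralizer in $\fgl(i,\C)$ is the commutative algebra $\C[J_i]=\lspan\{I,J_i,\dots,J_i^{i-1}\}$, and $Z_i=Z_{G_i}(J_i)$ is the group of its invertible elements, $\{c_0 I+c_1 J_i+\cdots+c_{i-1}J_i^{i-1}:c_0\in\C^{\times}\}$. Since everything here commutes, $Z_i$ is the internal direct product of the scalars $\C^{\times}I$ and the commutative unipotent subgroup $\{I+c_1 J_i+\cdots+c_{i-1}J_i^{i-1}\}\cong\C^{i-1}$, so $Z_i\cong\C^{\times}\times\C^{i-1}$ as an algebraic group. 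Taking the product over $i$,
\[
Z\;\cong\;\prod_{i=1}^{n-1}\bigl(\C^{\times}\times\C^{i-1}\bigr)\;\cong\;(\C^{\times})^{n-1}\times\C^{\sum_{i=1}^{n-1}(i-1)}\;=\;(\C^{\times})^{n-1}\times\C^{\binom{n-1}{2}},
\]
and the identity $\binom{n-1}{2}=\dn-n+1$ identifies this with the group in the statement. As a consistency check, $\dim Z=(n-1)+\binom{n-1}{2}=\dn=\dim\Phi^{-1}(0)_{sreg}$ by Theorem \ref{thm:irred}, which is exactly what freeness of the action with full-dimensional orbits demands.

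The one place that requires genuine care is the equality $j_i=1$: the invariant governing the orbit count in Theorem \ref{thm:general} measures, at each level, the number of \emph{distinct} eigenvalues common to $x_i$ and $x_{i+1}$, not common eigenvalues counted with multiplicity, and it is precisely the regularity of the $x_i$ (forcing one Jordan block per eigenvalue) that makes the shared eigenvalue $0$ contribute $1$ rather than $i$. Once this is granted, the remainder is the routine unwinding of Equation (\ref{eq:newact}) together with the standard description of the centralizer of a principal nilpotent recalled above.
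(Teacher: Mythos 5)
Your proof is correct and follows essentially the same route as the paper: both apply Theorem \ref{thm:general} with the observation (via Remark \ref{rem_git} and Proposition \ref{p:sreg}) that each $x_i$ is regular nilpotent, and both identify $Z$ as the product of centralizers of principal nilpotents $Z_{G_i}(e_i)\cong\C^{\times}\times\C^{i-1}$. You simply spell out more of the arithmetic (the count $j_i=1$ giving $j=n-1$, and $\sum_{i=1}^{n-1}(i-1)=\binom{n-1}{2}=\dn-n+1$), and your remark that the relevant $j_i$ counts \emph{distinct} common eigenvalues is a useful clarification the paper leaves implicit.
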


\begin{proof}
The first statement follows immediately from Remark \ref{rem_git} and Theorem \ref{thm:general}.  For the second statement, we observe that in this case the group $Z=Z_{G_{1}}(e_{1})\times\dots\times Z_{G_{n-1}}(e_{n-1})$, where $e_{i}\in\fg_{i}$ is the principal nilpotent Jordan matrix.  It follows that $Z=(\C^{\times})^{n-1}\times\C^{\dn-n+1}$. 
\end{proof}


Theorem \ref{thm:general} gives a complete description of the local structure of the Lagrangian foliation of regular adjoint orbits of $\fg$ by the Gelfand-Zeitlin integrable system and shows the system is locally algebraically integrable, giving natural algebraic ``angle coordinates" coming from the action of the group $Z=Z_{1}\times\dots\times Z_{n-1}$.  However, Theorem \ref{thm:general} does not say anything about the global nature of the foliation.  Motivated by Theorem \ref{thm:general}, we would like to extend the local $Z$-action on $\Phi^{-1}(c)_{sreg}$ given in (\ref{eq:newact}) to larger subvarieties of $\fg$.  However, this is not possible, except in certain special cases.  The definition of the $Z$-action uses the fact that the Jordan form of each $x_{i}$ for $i=1,\dots, n-1$ is fixed on the fiber $\Phi^{-1}(c)_{sreg}$.  The problem with trying to extend this action is that there is in general no morphism on a larger variety which assigns to $x_{i}$ its Jordan form.  The issue is that the ordered eigenvalues of a matrix are not in general algebraic functions of the matrix entries.

  For the set $\fg_{\Omega}$, Kostant and Wallach resolve this issue by producing an \'{e}tale covering $\fg_{\Omega}(\fe)$ of $\fg_{\Omega}$ on which the eigenvalues are algebraic functions \cite{KW2}.  They then lift the Lie algebra $\fa$ of Gelfand-Zeitlin vector fields in Equation (\ref{eq:GZvecfields}) to the covering where they intergrate to an algebraic action of the torus $(\C^{\times})^{\dn}$.  In our paper \cite{CE}, we extend this to the full strongly regular set using the theory of decomposition classes \cite{BK} and Poisson reduction \cite{EL}.



\section{The geometry of the strongly regular nilfiber}\label{s:Korbs}

In recent work \cite{CEKorbs}, we take a very different approach to describing the geometry of $\fg_{sreg}$ by studying the Borel subalgebras that contain elements of $\fg_{sreg}$.  We develop a new connection between the orbits of certain symmetric subgroups $K_{i}$ on the flag varieties of $\fg_{i}$ for $i=2,\,\dots,\, n$ and the Gelfand-Zeitlin integrable system on $\fg$.  We use this connection to prove that every Borel subalgebra of $\fg$ contains strongly regular elements, and we determine explicitly the Borel subalgebras which contain elements of the strongly regular nilfiber $\Phi^{-1}(0)_{sreg}=\Phi^{-1}(0,\dots, 0)_{sreg}$.  We show that there are $2^{n-1}$ such Borel subalgebras, and that the subvarieties of regular nilpotent elements of these Borel subalgebras are the $2^{n-1}$ irreducible components of $\Phi^{-1}(0)_{sreg}$ given in Corollary \ref{c:nilp}.  This description of the nilfiber is much more explicit than the one given in Corollary \ref{c:nilp}, since the $Z=(\C^{\times})^{n-1}\times\C^{\dn-n+1}$-action of Equation (\ref{eq:newact}) is not easy to compute explicitly.   We refer the reader to
our paper \cite{CEKorbs} for proofs of the results of this section.



\subsection{$K$-orbits and $\Phi^{-1}(0)_{sreg}$}

We begin by considering the strongly regular nilfiber of the Kostant-Wallach map $\Phi^{-1}(0)_{sreg}$.  By Remark \ref{rem_git} and (3) of Proposition \ref{p:sreg}, we note that $x\in\Phi^{-1}(0)_{sreg}$ if and only if the following two conditions are satisfied for every $i=2,\dots, n$:
  \begin{equation}\label{eq:theconditions}
\begin{split}
&(1)\; x_{i-1}, x_{i} \mbox{ are regular nilpotent.} \\
&(2)\; \fz_{\fg_{i-1}}(x_{i-1})\cap\fz_{\fg_{i}}(x_{i})=0.\end{split}
\end{equation}
 We proceed by finding the Borels in $\fg_{i}$ which contain elements satisfying (1) and (2), and we then use these Borels to construct the Borels of $\fg$ which contain elements of $\Phi^{-1}(0)_{sreg}$.  
 
 Let $K_{i}:=GL(i-1,\C)\times GL(1,\C)\subset GL(i,\C)$ be the group of invertible block diagonal matrices with an $(i-1)\times (i-1)$ block in the upper left corner and a $1\times 1$ block in the lower right corner.  Let $\B_{i}$ be the flag variety of $\fg_{i}$.  Then $K_{i}$ acts on $\B_{i}$ by conjugation with finitely many orbits (see for example \cite{Sp}).  We observe that the conditions (1) and (2) in (\ref{eq:theconditions}) are $\Ad(K_{i})$-equivariant.  Thus, the problem of finding the Borel subalgebras of $\fg_{i}$ containing elements satisfying these conditions reduces to the problem of studying the conditions for
a representative in each $K_{i}$-orbit.  In this section, we find all $K_{i}$-orbits $Q_{i}$ through Borel subalgebras containing such elements, and in the process reveal some new facts about the geometry of $K_{i}$-orbits on $\B_{i}$.  In the following sections, we explain how to link the orbits $Q_{i}$ together for $i=2,\dots, n$ to produce the Borel subalgebras of $\fg$ that contain elements of $\Phi^{-1}(0)_{sreg}$ and use these Borels to study the geometry of the fiber $\Phi^{-1}(0)_{sreg}$.

For concreteness, let us fix $i=n$, so that $K_{n}=GL(n-1,\C)\times GL(1,\C)$ and $\B_{n}$ is the flag variety of $\fgl(n,\C)$.  For $\fb\in \mathcal{B}_{n}$, let $K_{n}\cdot \fb$ denote the $K_{n}$-orbit through $\fb$.  We analyze each of the conditions in (\ref{eq:theconditions}) in turn.


\begin{thm}\label{thm:1closed}(\cite{CEKorbs}, Proposition 3.6)
Suppose $x\in\fg$ satisfies condition (1) in (\ref{eq:theconditions}) and that $x\in\fb$, with $\fb\subset\fg$ a Borel subalgebra of $\fg$.  Then $\fb\in Q$, where $Q$ is a closed $K_{n}$-orbit.
\end{thm}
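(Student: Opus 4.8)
The plan is to reduce the statement to a concrete fact about a single Borel subalgebra containing a regular nilpotent element, and then to invoke the classification of closed $K_n$-orbits on $\B_n$. First I would recall the general criterion (due to Springer, reviewed in Section 4) that a $K_n$-orbit $K_n \cdot \fb$ on $\B_n$ is closed if and only if the Borel subalgebra $\fb$ is stable under a Cartan involution $\theta_n$ of $\fg = \fgl(n,\C)$ associated with the symmetric subgroup $K_n = GL(n-1,\C)\times GL(1,\C)$; equivalently, $\fb$ is $\theta_n$-stable, which forces $\fb$ to contain a $\theta_n$-stable Cartan subalgebra. So the task becomes: given a Borel $\fb$ containing an element $x$ with $x_n = x$ regular nilpotent in $\fg$, show that $\fb$ lies in a closed orbit.

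The key step is a \emph{normalization within the orbit}: since condition (1) is $\Ad(K_n)$-equivariant, I may replace $x$ and $\fb$ simultaneously by $\Ad(k)\cdot x$ and $\Ad(k)\cdot \fb$ for any $k\in K_n$, so it suffices to analyze one representative. Because $x\in\fg$ is regular nilpotent and lies in the Borel $\fb$, $x$ must be a regular nilpotent element of $\fb$, hence $x$ lies in the nilradical $\fn$ of $\fb$ and is in fact principal there; this pins down $\fb$ as the unique Borel whose nilradical contains $x$ in a principal way, so $\fb = \fz_\fg(x) + [\text{appropriate piece}]$, and in particular the flag determined by $\fb$ is the one for which $x$ is "in Hessenberg/companion position." I would then choose the $K_n$-representative so that $x$ is the standard principal nilpotent $e = \sum_{k=2}^n E_{k,k-1}$ (every regular nilpotent in $\fg$ is $GL(n,\C)$-conjugate to $e$, and one checks the conjugating element can be taken in $K_n$ up to adjusting by the centralizer, using that $x_{n-1}$ is also regular nilpotent in $\fg_{n-1}$ by condition (1)), and then $\fb$ becomes a specific Borel which one verifies directly is $\theta_n$-stable — indeed it is conjugate under $K_n$ to the standard upper-triangular Borel $\fb_+$, and $\fb_+$ is visibly $\theta_n$-stable for the standard block involution.

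The main obstacle is precisely this normalization: showing that the conjugacy $\Ad(g)\cdot x = e$ can be arranged with $g\in K_n$, or at least that the resulting Borel $\Ad(g)\cdot\fb$ lands in a closed $K_n$-orbit regardless. The subtlety is that $K_n$ does not act transitively on regular nilpotents of $\fg$, so one cannot simply conjugate $x$ to $e$; instead one should use that the pair $(x_{n-1}, x_n)$ with both members regular nilpotent is quite rigid. Concretely, I would argue that $\fb$ contains a $\theta_n$-stable Cartan subalgebra by exhibiting a torus: regular nilpotent $x\in\fb$ determines a unique $\fsl_2$-triple $(x, h, y)$ with $h\in\fb$ semisimple regular, and by a dimension/parabolic-type argument one shows $h$ (or a $K_n$-translate of it) can be chosen $\theta_n$-fixed, whence $\fz_\fg(h)\subset\fb$ is a $\theta_n$-stable Cartan and $\fb$ is $\theta_n$-stable. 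Once $\fb$ is $\theta_n$-stable, the cited result from Section 4 gives immediately that $K_n\cdot\fb$ is closed, completing the proof.
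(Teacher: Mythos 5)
Your proposal correctly identifies the right necessary-and-sufficient criterion (a $K_n$-orbit is closed iff any Borel in it is $\theta$-stable, Proposition \ref{prop:closed}), and correctly observes that condition (1) in \eqref{eq:theconditions} is $\Ad(K_n)$-equivariant. However, the central step --- showing that the given $\fb$ is actually $\theta$-stable --- is never established, so the argument has a genuine gap.

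You propose two routes to $\theta$-stability and abandon both. The first (conjugate $x$ to the principal nilpotent $e$ by an element of $K_n$, then observe that $\fb$ becomes $\fb_+$) you correctly flag as problematic, since $K_n$ does not act transitively on regular nilpotents; but you then assert without proof that the conjugating element ``can be taken in $K_n$ up to adjusting by the centralizer, using that $x_{n-1}$ is also regular nilpotent.'' This is not a small technicality: it is essentially equivalent to the theorem itself, since the closed $K_n$-orbits are precisely the ones that contain a $K_n$-translate of $\fb_+$ or one of the other $n-1$ specific Borels, and deciding which regular nilpotents can be $K_n$-moved into one of these is exactly the content of Proposition 3.6 of \cite{CEKorbs}. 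The second route (choose a $\theta$-fixed $h$ in an $\mathfrak{sl}_2$-triple through $x$ so that $\fz_\fg(h)\subset\fb$ is a $\theta$-stable Cartan) is stated with only the phrase ``by a dimension/parabolic-type argument,'' and it is unclear how condition (1) on $x_{n-1}$ would enter such an argument. Without that input the claim is false: a regular nilpotent $x$ of $\fg$ alone can perfectly well lie in a Borel whose $K_n$-orbit is not closed, so any proof must use the regularity of $x_{n-1}$ in an essential way, and your sketch never does.

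The paper's actual proof goes in the opposite (contrapositive) direction, via Theorem \ref{thm:strong}: for each non-closed orbit $Q$, one takes the standard representative $\fb_{i,j}$, computes the associated involution $\theta_{\widehat{v_{i,j}}}$ stabilizing the standard torus, and shows that the $\fk_n$-projection of every regular nilpotent of $\fb_{i,j}$ fails to be nilpotent; this immediately rules out $\fb$ being in such a $Q$ when $x$ and $x_{n-1}$ are both nilpotent (since then $\pi_{\fk_n}(x)=(x_{n-1},0)$ is nilpotent). So the paper's route is an explicit case analysis on the classification of $K_n$-orbits, not a direct construction of a $\theta$-stable Cartan. Your approach, if it could be made to work, would be more conceptual and avoid enumerating orbits, but as written the key step is missing and it is not clear it can be filled in without essentially redoing the paper's computation.
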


Theorem \ref{thm:1closed} follows from a stronger result.  The group $K_{n}$ is the group of fixed points of the involution $\theta$ on $G$, where $\theta(g)=cgc^{-1}$ with $c=diag[1,\dots, 1, -1]$.  Let $\fk_{n}=Lie(K_{n})$, so that $\fk_{n}$ is the Lie algebra of block diagonal matrices $\fk_{n}=\fgl(n-1,\C)\oplus\fgl(1,\C)$.  Then $\fg=\fk_{n}\oplus\fp_{n}$, where $\fp_{n}$ is the $-1$-eigenspace for the involution $\theta$ on $\fg$.  Let $\pi_{\fk_{n}}:\fg\to\fk_{n}$ be the projection of $\fg$ onto $\fk_{n}$ along $\fp_{n}$, and let $\mathcal{N}_{\fk_{n}}$ be the nilpotent cone in $\fk_{n}$.

\begin{thm}\label{thm:strong}(\cite{CEKorbs}, Theorem 3.7)
Let $\fb\subset\fg$ be a Borel subalgebra and let $\fn=[\fb,\fb]$, with $\fn^{reg}$ the regular nilpotent elements in $\fb$.  Suppose that $\fb\in Q$ with $Q$ a $K_{n}$-orbit in $\B_{n}$ which is not closed.  Then $\pi_{\fk_{n}}(\fn^{reg})\cap \mathcal{N}_{\fk_{n}}=\emptyset.$ 
\end{thm}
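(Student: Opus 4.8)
The plan is to reduce the statement to a short block--matrix computation, after recalling the flag--theoretic description of $K_n$-orbits. Write $\C^{n}=\C^{n-1}\oplus\C e_n$ for the decomposition defining $K_n$ (so $\fk_n=\fgl(n-1,\C)\oplus\fgl(1,\C)$ acts on the two summands and $\fp_n$ interchanges them, $e_n$ being the $n$-th coordinate vector). In the general theory of $K$-orbits (cf.\ \cite{Sp}), the $K_n$-orbit of a Borel $\fb$ with flag $F_1\subset\cdots\subset F_n=\C^{n}$ is determined by the pair $(k,m)$, where $k$ is the least index with $F_k\not\subseteq\C^{n-1}$ and $m$ the least index with $\C e_n\subseteq F_m$; one always has $k\le m$, and $K_n\cdot\fb$ is closed exactly when $k=m$ (equivalently, when $F_\bullet$ is $\theta$-stable). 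Granting this, by contraposition it suffices to prove: if $X\in\fn^{reg}$ and $\pi_{\fk_n}(X)\in\mathcal{N}_{\fk_n}$, then $k=m$ for $\fb$, i.e.\ $F_{m-1}\subseteq\C^{n-1}$.

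Suppose then that $X\in\fn^{reg}$ satisfies $\pi_{\fk_n}(X)\in\mathcal{N}_{\fk_n}$, and write $X=\left(\begin{smallmatrix}A & b\\ c^{T} & d\end{smallmatrix}\right)$ with $A\in\fgl(n-1,\C)$ and $b,c\in\C^{n-1}$. Since $\pi_{\fk_n}(X)=\left(\begin{smallmatrix}A & 0\\ 0 & d\end{smallmatrix}\right)$, the hypothesis says precisely that $A$ is nilpotent and $d=0$. Because $X\in\fn^{reg}$ it is a regular nilpotent element of $\fg$: thus $\det(tI-X)=t^{n}$, and, having a single Jordan block, it satisfies $F_j=\ker(X^{j})$ for all $j$. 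Expanding $\det(tI-X)$ by the Schur complement with respect to the $(n-1)\times(n-1)$ block and using $\det(tI-A)=t^{n-1}$ (as $A$ is nilpotent) gives $\det(tI-X)=t^{n-1}\bigl(t-c^{T}(tI-A)^{-1}b\bigr)$; comparing with $t^{n}$ forces $c^{T}(tI-A)^{-1}b\equiv 0$, hence $c^{T}A^{j}b=0$ for every $j\ge 0$.

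This vanishing is what rigidifies the line $\C e_n$ inside the flag. Since $d=0$ we have $Xe_n=b\in\C^{n-1}$, and as $X$ acts on $v\in\C^{n-1}$ by $Xv=Av+(c^{T}v)e_n$, the identities $c^{T}A^{j}b=0$ give by induction $X^{j}e_n=A^{j-1}b$ for $j\ge 1$, and more generally that $U:=\operatorname{span}\{A^{j}b:j\ge 0\}\subseteq\C^{n-1}$ is $X$-stable. If $b=0$ then $e_n\in\ker X=F_1$, so $F_1=\C e_n$ and $k=1=m$; so assume $b\ne 0$ and let $m-1$ be the nilpotency index of $b$ under $A$, i.e.\ $A^{m-2}b\ne 0=A^{m-1}b$. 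Then $b,Ab,\dots,A^{m-2}b$ are linearly independent, so $\dim U=m-1$; moreover $X^{m-1}(A^{i}b)=A^{i+m-1}b=0$ for $0\le i\le m-2$, so $U\subseteq\ker(X^{m-1})=F_{m-1}$, which has dimension $m-1$ since $X$ is a single Jordan block. Hence $U=F_{m-1}\subseteq\C^{n-1}$. Finally $m$ is also the least index with $\C e_n\subseteq F_m$, because $F_j=\ker(X^j)$ and $X^je_n=A^{j-1}b$; so $F_{m-1}\subseteq\C^{n-1}$ gives $k=m$, and $K_n\cdot\fb$ is closed --- the desired contradiction with ``$Q$ not closed''.

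The bookkeeping with kernels of powers of $X$ in the last paragraph is routine; the two points that make the argument are (i) recognizing that $\pi_{\fk_n}(X)\in\mathcal{N}_{\fk_n}$ is exactly the pair of conditions ``$A$ nilpotent'' and ``$d=0$'', and (ii) the Schur--complement identity, which converts ``$X$ regular nilpotent with $A$ nilpotent'' into the strong vanishing $c^{T}A^{j}b=0$ that forces $\C e_n$ into an $X$-invariant subspace of $\C^{n-1}$. The one external ingredient is the flag--theoretic classification of $K_n$-orbits together with the identification of the closed ones as those with $k=m$ (equivalently, the $n$ orbits of $\theta$-stable Borels); if that is not yet in place at this point in the exposition, establishing it is where the remaining effort would go.
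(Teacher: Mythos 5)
Your proof is correct, but it takes a genuinely different route from the one in \cite{CEKorbs}, which is sketched in Remark \ref{rem:bigproof}. The paper's argument uses $K_n$-equivariance to reduce to the standard representatives $\fb_{i,j}$ of the non-closed orbits, then replaces the pair $(\fb_{i,j},\theta)$ by $(\fb_{+},\theta_{\hv})$ and computes $\pi_{\fk_n}(\fn^{reg})$ via the action of the associated involution $\theta_{\hv}$ on roots --- i.e.\ it leans on the compact/non-compact/complex root-type machinery of Section \ref{s:Kgeom}. Your argument instead contraposes and works directly with a regular nilpotent $X\in\fn^{reg}$ in block form $\left(\begin{smallmatrix}A&b\\ c^T&d\end{smallmatrix}\right)$: the hypothesis $\pi_{\fk_n}(X)\in\mathcal{N}_{\fk_n}$ becomes ``$A$ nilpotent and $d=0$,'' the Schur complement identity forces $c^{T}A^{j}b=0$ for all $j$, and from this you deduce that the $(m-1)$-dimensional cyclic subspace $U=\operatorname{span}\{A^jb\}$ equals $F_{m-1}=\ker X^{m-1}$ and lies in $\C^{n-1}$, so the invariants $k$ and $m$ of the flag coincide and the orbit is closed. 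The linear-algebra computations are all sound (the identities $X^je_n=A^{j-1}b$, the $X$-stability of $U$, and the dimension count $\dim U=m-1=\dim F_{m-1}$ all check out), and the flag-theoretic characterization of closed orbits as ``$k=m$'' (equivalently, the flag is $\theta$-stable) is correct and can be read off from Examples \ref{ex:closed}, \ref{ex:qijthetaroots}, and \ref{ex:allorbitsqiqij}, or from Proposition \ref{prop:closed} together with the observation that a flag is $\theta$-stable exactly when each $F_j$ splits as $(F_j\cap\C^{n-1})\oplus(F_j\cap\C e_n)$.

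The trade-off: your approach is substantially more elementary --- it avoids Cayley transforms, the Springer invariant, and the root-type analysis entirely, and the Schur complement step is a clean way to extract the vanishing $c^TA^jb=0$ from regular nilpotence. On the other hand it is tailored to the specific pair $(GL(n,\C),\,GL(n-1,\C)\times GL(1,\C))$ via the explicit $(k,m)$-parametrization of orbits, whereas the paper's root-theoretic argument fits the framework that is reused throughout Section \ref{s:Kgeom} and would generalize to other symmetric pairs. You are right to flag that the $(k,m)$-classification (including ``closed $\Leftrightarrow k=m$'') is the one external input you invoke; in the paper's exposition that fact is only available after the material in Section \ref{s:Kgeom}, so a self-contained version of your proof would either cite it or carry out the short verification sketched above.
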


\begin{rem}\label{rem:bigproof}
By the $K_{n}$-equivariance of the projection $\pi_{\fk_{n}}:\fg\to\fk_{n}$, it suffices to prove Theorem \ref{thm:strong} for a representative of the $K_{n}$-orbit $Q$.  Standard representatives are given by the Borel subalgebras
$\fb_{i,j}$
discussed later in Notation \ref{nota:qijdef} and Example \ref{ex:qijthetaroots}.
  Let $\fb=\fb_{i,j}$ be such a representative.  To compute $\pi_{\fk_{n}}(\fn^{reg})$, one needs to understand the action of $\theta$ on the roots of $\fb$ with respect to a $\theta$-stable Cartan $\fh^{\prime}\subset\fb$.  In general, this action is difficult to compute.  It is easier to replace the pair $(\fb, \theta)$ with an equivalent pair $(\fb_{+},\theta^{\prime})$ where $\fb_{+}\subset\fg$ is the standard Borel subalgbera of upper triangular matrices and $\theta^{\prime}$ is an involution of $\fg$ which stabilizes the standard Cartan subalgebra of diagonal matrices $\fh\subset\fb_{+}$.  We then prove the statement of the theorem for the pair $(\fb_{+},\theta^{\prime})$.  The construction and computation of the involution $\theta^{\prime}$ is explained in detail 
in Equation (\ref{eq:newinv}) and Example \ref{ex:qijthetaroots}, where it is denoted by $\theta_{\hv}$ and $\theta_{\widehat{v_{i,j}}}$ respectively.
\end{rem}

Theorem \ref{thm:1closed} permits us to focus only on closed $K_{n}$-orbits.  There are $n$ such orbits in $\B_{n}$, two of which are $Q_{+,n}=K_{n}\cdot\fb_{+}$, the orbit of the $n\times n$ upper triangular matrices, and $Q_{-,n}=K_{n}\cdot\fb_{-}$, the orbit of the $n\times n$ lower triangular matrices (see Example \ref{ex:closed}).  We now study the second condition in (\ref{eq:theconditions}). 
\begin{prop}\label{p:second}
Let $Q=K_{n}\cdot\fb$ be a closed $K_{n}$-orbit and let $x\in\fn=[\fb,\fb]$ satisfy condition (2) in (\ref{eq:theconditions}).  Then $Q=Q_{+,n}$ or $Q=Q_{-,n}$.  
\end{prop}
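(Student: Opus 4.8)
The plan is to translate condition (2) of (\ref{eq:theconditions}) at level $i=n$ into a statement about the centralizer of $x$ in $\fg$ alone, and then to eliminate every closed $K_{n}$-orbit except $Q_{+,n}$ and $Q_{-,n}$ by exhibiting an explicit element of that centralizer lying in $\fg_{n-1}$.

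First I would reformulate the condition. Writing $x=\left(\begin{smallmatrix} x_{n-1} & b\\ c^{T} & d\end{smallmatrix}\right)$ in $(n-1)+1$ block form and observing that every $y\in\fg_{n-1}$ has the shape $\left(\begin{smallmatrix} Y' & 0\\ 0 & 0\end{smallmatrix}\right)$, a quick block computation gives $[y,x]=\left(\begin{smallmatrix} [Y',x_{n-1}] & Y'b\\ -c^{T}Y' & 0\end{smallmatrix}\right)$, so $[y,x]=0$ already forces $[y,x_{n-1}]=0$ in $\fg_{n-1}$. Hence $\fz_{\fg_{n-1}}(x_{n-1})\cap\fz_{\fg_{n}}(x_{n})=\fg_{n-1}\cap\fz_{\fg}(x)$, and condition (2) at level $n$ becomes the statement that no nonzero element of $\fg_{n-1}$ centralizes $x$. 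Since conjugation by $K_{n}$ carries $\fg_{n-1}$ to itself (the $GL(1,\C)$-factor acts trivially on $\fg_{n-1}$) and is equivariant on centralizers, this reformulated condition depends only on the orbit $Q=K_{n}\cdot\fb$, so it is enough to test it on one Borel subalgebra in each closed orbit.

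Next I would invoke the classification of the closed $K_{n}$-orbits (Example \ref{ex:closed}): there are $n$ of them, with representatives the Borel subalgebras $\fb_{p}$ stabilizing the flag obtained from the standard flag by inserting the line $\C e_{n}$ in position $p$, for $p=1,\dots,n$; here $\fb_{n}=\fb_{+}$ represents $Q_{+,n}$ and $\fb_{1}$ represents $Q_{-,n}$. After reordering the basis as $f_{1}=e_{1},\dots,f_{p-1}=e_{p-1},\ f_{p}=e_{n},\ f_{p+1}=e_{p},\dots,f_{n}=e_{n-1}$, the nilradical $\fn_{p}=[\fb_{p},\fb_{p}]$ is exactly the space of strictly upper triangular matrices in the $f$-basis, while $\fg_{n-1}$ is exactly the space of matrices whose $p$-th row and $p$-th column in the $f$-basis both vanish. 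Now assume $1<p<n$ and take any $x\in\fn_{p}$. The rank-one matrix $Y=E_{f_{1},f_{n}}$ sending $f_{n}$ to $f_{1}$ and killing the other $f_{i}$ --- which in the standard basis is $E_{1,n-1}$, and which lies in $\fg_{n-1}$ precisely because $p\notin\{1,n\}$ --- commutes with every matrix that is strictly upper triangular in the $f$-basis, since $[E_{1,n},E_{i,j}]=\delta_{n,i}E_{1,j}-\delta_{j,1}E_{i,n}$ vanishes whenever $i<j$. Thus $0\neq Y\in\fg_{n-1}\cap\fz_{\fg}(x)$, contradicting the reformulated condition (2). Hence no $x$ in a nilradical of a Borel lying in an ``intermediate'' closed orbit can satisfy condition (2), and therefore $Q=Q_{+,n}$ or $Q=Q_{-,n}$.

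All the computations involved here are elementary, and I expect the main obstacle to be purely bookkeeping: keeping the reordering $\dot{w}_{p}$ consistent so that the single element $E_{1,n-1}$ is simultaneously seen to lie in $\fg_{n-1}$ and to centralize the entire nilradical $\fn_{p}$. This is exactly the mechanism that forces $1<p<n$ in the argument, and it is reassuring that it breaks down --- as it must --- for the two extreme orbits $Q_{\pm,n}$.
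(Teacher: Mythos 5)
Your proof is correct and takes essentially the same route as the paper: reduce by $K_n$-equivariance to the standard closed-orbit representatives $\fb_p$ from Example \ref{ex:closed}, then exhibit a nonzero element of $\fg_{n-1}$ that centralizes the entire nilradical when $1<p<n$, which is exactly the content of Proposition \ref{p:centralizers} (whose explicit verification the paper defers to \cite{CEKorbs}, Proposition 3.8). Your reformulation of condition (2) at level $n$ as $\fg_{n-1}\cap\fz_{\fg}(x)=0$ and the reordered-basis element $E_{f_1,f_n}=E_{1,n-1}$ cleanly supply the computation the expository paper omits.
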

This is an immediate consequence of the following result.  Recall the projection $\pi_{n-1}:\fg\to\fg_{n-1}$ defined by $\pi_{n-1}(x)=x_{n-1}.$ 

\begin{prop}\label{p:centralizers} (\cite{CEKorbs}, Proposition 3.8)
Let $\fb\subset\fg$ be a Borel subalgebra
 that generates a closed $K_{n}$-orbit $Q$, 
which is neither the orbit of the upper nor the lower triangular matrices.
  Let $\fn=[\fb,\fb]$ and let $\fn_{n-1}:=\pi_{n-1}(\fn)$.
  Let $\fz_{\fg}(\fn)$ denote the centralizer of $\fn$ in $\fg$ and 
let $\fz_{\fg_{n-1}}(\fn_{n-1})$ denote the centralizer of $\fn_{n-1}$ in $\fg_{n-1}$.  Then 
\begin{equation}\label{eq:nilcentralizer}
\fz_{\fg_{n-1}}(\fn_{n-1})\cap \fz_{\fg}(\fn)\neq 0.
\end{equation}  
\end{prop}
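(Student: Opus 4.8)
The plan is to reduce, by $K_n$-equivariance, to one explicit $\theta$-stable Borel representing $Q$, and then to exhibit a single nonzero element of both centralizers -- the highest root vector of that Borel. For $g=(g',t)\in K_n=GL(n-1,\C)\times GL(1,\C)$ the projection $\pi_{n-1}$ intertwines $\Ad(g)$ on $\fg$ with $\Ad(g')$ on $\fg_{n-1}$ (conjugation by $g$ restricts to conjugation by $g'$ on the upper left block), so $\fn$, $\fn_{n-1}=\pi_{n-1}(\fn)$, $\fz_\fg(\fn)$, and $\fz_{\fg_{n-1}}(\fn_{n-1})$ all vary compatibly as $\fb$ ranges over $Q$; hence it suffices to verify the claim for one convenient $\fb\in Q$. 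Since $Q$ is closed, by the classification of closed $K_n$-orbits (Example \ref{ex:closed}) we may take $\fb$ to contain the standard diagonal Cartan $\fh$, so that $\fb$ is the stabilizer of a coordinate flag, encoded by an ordering $(i_1,\dots,i_n)$ of $\{1,\dots,n\}$. These $n$ closed orbits are distinguished by the position $p$ of the index $n$ in the ordering, with $p=n$ giving $Q_{+,n}$ and $p=1$ giving $Q_{-,n}$; so the hypothesis forces $1<p<n$ (in particular $n\ge 3$), and after permuting the remaining indices I may take the ordering to be $(1,2,\dots,p-1,n,p,p+1,\dots,n-1)$.

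Two standard facts then finish the argument. Because $\fb\supset\fh$, the nilradical $\fn=[\fb,\fb]$ is a direct sum of positive root spaces with no Cartan part, so $\fn_{n-1}=\pi_{n-1}(\fn)$ is the sum of those root spaces $\fg_{\epsilon_a-\epsilon_b}\subseteq\fn$ with $a,b\le n-1$ (root vectors involving the index $n$ project to $0$); in particular $\fn_{n-1}\subseteq\fn$. Second, the highest root of $\fb$ is $\epsilon_{i_1}-\epsilon_{i_n}$, and its root vector $E_{i_1,i_n}$ centralizes $\fn$, since adding a positive root to the highest root can never produce a root; thus $E_{i_1,i_n}\in\fz_\fg(\fn)$. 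Now, since $n$ occupies an interior position, $i_1\ne n$ and $i_n\ne n$, so $E_{i_1,i_n}\in\fg_{n-1}$ -- indeed with the chosen ordering $i_1=1$ and $i_n=n-1$, so the vector is $E_{1,n-1}$. Since $\fn_{n-1}\subseteq\fn$, this vector also centralizes $\fn_{n-1}$, and being in $\fg_{n-1}$ it lies in $\fz_{\fg_{n-1}}(\fn_{n-1})$. Hence $0\ne E_{1,n-1}\in\fz_{\fg_{n-1}}(\fn_{n-1})\cap\fz_\fg(\fn)$, which is (\ref{eq:nilcentralizer}); in fact one can check the intersection is exactly the line $\C\,E_{1,n-1}$.

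The only step that genuinely uses the hypothesis $Q\ne Q_{\pm,n}$, and hence the one real obstacle, is the positional observation that the highest root vector of $\fb$ then lies inside the subalgebra $\fg_{n-1}$; everything else is bookkeeping once the right representative is in hand. As an alternative that sidesteps the classification of closed orbits, one could instead compute directly that $\fz_\fg(\fn)=\C\, I_n\oplus\C\, E_{i_1,i_n}$ for any Borel $\fb\supset\fh$ and then apply the same positional remark to intersect with $\fg_{n-1}$.
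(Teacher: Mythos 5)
Your proof is correct and follows essentially the same approach the paper outlines in the remark following the proposition: use $K_n$-equivariance of $\pi_{n-1}$ to reduce to the explicit representative Borel from Example \ref{ex:closed}, then verify the nonvanishing of the intersection directly, here via the highest root vector $E_{1,n-1}$. Your positional observation that $e_n$ sits in an interior slot of the flag is exactly what forces the highest root vector to land in $\fg_{n-1}$, and together with $\fn_{n-1}\subseteq\fn$ this cleanly completes the verification.
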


\begin{rem}
We note that the projection $\pi_{n-1}:\fg\to\fg_{n-1}$ is $K_{n}$-equivariant, so that it suffices to prove Equation (\ref{eq:nilcentralizer}) for a representative $\fb$ of the closed $K_{n}$-orbit $Q$.  We can take $\fb$ to be one of the representatives given below in Example \ref{ex:closed}.
\end{rem}
 For any $i=2,\dots, n$, let $Q_{+,i}$ denote the $K_{i}$-orbit of the $i\times i$ upper triangular matrices in $\B_{i}$ and let $Q_{-,i}$ denote the $K_{i}$-orbit of the $i\times i$ lower triangular matrices in $\B_{i}$.  Combining the results of Theorem \ref{thm:1closed} and Proposition \ref{p:second}, we obtain:
\begin{thm}\label{thm:both}
Let $x\in \fg_{i}$ satisfy the two conditions in (\ref{eq:theconditions}) and suppose that $x\in\fb$, with $\fb\subset\fg_{i}$ a Borel subalgebra.  Then $K_{i}\cdot \fb=Q_{+,i}$ or $K_{i}\cdot \fb=Q_{-,i}$.
\end{thm}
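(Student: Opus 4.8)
The plan is to deduce Theorem \ref{thm:both} directly from the two results that have already been assembled, namely Theorem \ref{thm:1closed} and Proposition \ref{p:second}, after first unwinding how the hypothesis ``$x$ satisfies the two conditions in (\ref{eq:theconditions})'' interacts with the containment $x\in\fb$. Fix $i\in\{2,\dots,n\}$ and work entirely inside $\fg_i$ with its subgroup $K_i = GL(i-1,\C)\times GL(1,\C)$ and flag variety $\B_i$; note that Theorems \ref{thm:1closed}, \ref{thm:strong} and Propositions \ref{p:second}, \ref{p:centralizers} are all stated for the top case $i=n$, but each of them is manifestly a statement about the pair $(\fgl(i,\C), GL(i-1,\C)\times GL(1,\C))$ applied with the role of $n$ played by $i$, together with the nested pair $\fg_{i-1}\subset\fg_i$; so I would begin by remarking that we apply those results verbatim with $n$ replaced by $i$.

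Next I would carry out the main deduction in two steps. \textbf{Step 1.} Since $x\in\fb$ and $x$ satisfies condition (1) of (\ref{eq:theconditions}), in particular $x_{i}$ (that is, $x$ itself, viewed in $\fg_i$) is regular nilpotent, so $x\in\fn^{reg}$ where $\fn=[\fb,\fb]$. By Theorem \ref{thm:1closed} (with $n\rightsquigarrow i$), the orbit $Q:=K_i\cdot\fb$ is a closed $K_i$-orbit. \textbf{Step 2.} Now $Q$ is closed, $x\in\fn=[\fb,\fb]$, and $x$ satisfies condition (2) of (\ref{eq:theconditions}), i.e. $\fz_{\fg_{i-1}}(x_{i-1})\cap\fz_{\fg_i}(x_i)=0$. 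Proposition \ref{p:second} (again with $n\rightsquigarrow i$) says precisely that a closed $K_i$-orbit containing in its Borel such an element must be $Q_{+,i}$ or $Q_{-,i}$. Hence $K_i\cdot\fb=Q_{+,i}$ or $K_i\cdot\fb=Q_{-,i}$, which is the assertion.

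The one point deserving a little care — and the only place where something must actually be checked rather than quoted — is the passage from condition (2) on the single element $x$ to the nilradical-level statement used in Proposition \ref{p:second}. Proposition \ref{p:second} is phrased for ``$x\in\fn$ satisfying condition (2)'', so this is literally our hypothesis; but to be safe I would also observe the compatibility with Proposition \ref{p:centralizers}: there one has $\fz_{\fg_{i-1}}(\fn_{i-1})\cap\fz_{\fg_i}(\fn)\subseteq \fz_{\fg_{i-1}}(x_{i-1})\cap\fz_{\fg_i}(x_i)$ for any $x\in\fn$ (since centralizing a larger set is a stronger condition), so that a closed orbit which is neither $Q_{+,i}$ nor $Q_{-,i}$ forces, via Proposition \ref{p:centralizers}, the intersection $\fz_{\fg_{i-1}}(x_{i-1})\cap\fz_{\fg_i}(x_i)$ to be nonzero, contradicting condition (2). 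This is exactly the contrapositive reasoning already packaged into Proposition \ref{p:second}, so invoking that proposition suffices.

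The expected main obstacle is not in the logical chain, which is short, but in being scrupulous about the reduction ``$n\rightsquigarrow i$'': one must make sure that the nested structure $\fg_{i-1}\subset\fg_i$ and the involution $\theta$ on $\fg_i$ with fixed points $K_i$ are genuinely the same data to which Theorem \ref{thm:1closed} and Proposition \ref{p:second} were applied, only with $i$ in place of $n$. Since $\fg_i = \fgl(i,\C)$, $K_i = GL(i-1,\C)\times GL(1,\C)$, and the conditions (\ref{eq:theconditions}) for the index $i$ are exactly conditions (1), (2) read inside $\fg_i$, this is a routine rewriting and presents no real difficulty; once it is stated, the proof of Theorem \ref{thm:both} is immediate by combining Theorem \ref{thm:1closed} and Proposition \ref{p:second}.
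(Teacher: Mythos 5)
Your proof is correct and takes essentially the same approach as the paper: the paper presents Theorem \ref{thm:both} as an immediate consequence of ``combining the results of Theorem \ref{thm:1closed} and Proposition \ref{p:second},'' applied with $n$ replaced by $i$, which is precisely your two-step argument (condition (1) $\Rightarrow$ closed $K_i$-orbit via Theorem \ref{thm:1closed}, then closed orbit plus condition (2) $\Rightarrow$ $Q_{+,i}$ or $Q_{-,i}$ via Proposition \ref{p:second}). Your observation that $x$ regular nilpotent in $\fb$ forces $x\in\fn=[\fb,\fb]$, so that the hypothesis of Proposition \ref{p:second} is met, and your remark on the routine $n\rightsquigarrow i$ reduction, are exactly the right points to flag.
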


\subsection{ Constructing Borel subalgebras out of $K_{i}$-orbits}
\label{s:constructborels}
  In this section, we explain how to link together the $K_{i}$-orbits $Q_{+,i}$ and $Q_{-,i}$ for $i=2,\dots, n$ to construct all the Borel subalgebras containing elements of $\Phi^{-1}(0)_{sreg}$.  The key to the construction is the following lemma.

\begin{lem}\label{l:closed}(\cite{CEKorbs}, Proposition 4.1)
Let $Q$ be a closed $K_{n}$-orbit in $\B_{n}$ and let $\fb\in Q$.  Then $\pi_{n-1}(\fb)\subset\fg_{n-1}$ is a Borel subalgebra.  
\end{lem}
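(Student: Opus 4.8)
The plan is to reduce to an explicit representative of the closed $K_n$-orbit $Q$ and verify the claim by a direct computation. First I would invoke $K_n$-equivariance of the projection $\pi_{n-1}:\fg\to\fg_{n-1}$ together with the fact that $\pi_{n-1}$ carries Borel subalgebras of $\fg_{n-1}$-containing flags around in a controlled way: if $\fb'=\Ad(k)\fb$ for $k\in K_n$, then $\pi_{n-1}(\fb')=\Ad(\bar k)\pi_{n-1}(\fb)$, where $\bar k$ is the image of $k$ under the projection $K_n=GL(n-1,\C)\times GL(1,\C)\to GL(n-1,\C)$; here one uses that $\pi_{n-1}(\Ad(k)x)=\Ad(\bar k)\pi_{n-1}(x)$ because the $(n-1)\times(n-1)$ corner of a $K_n$-conjugate of $x$ only sees the $GL(n-1,\C)$-factor of $k$. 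Hence $\pi_{n-1}(\fb)$ is a Borel subalgebra of $\fg_{n-1}$ for every $\fb\in Q$ if and only if it is so for one representative.

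Next I would use the classification of closed $K_n$-orbits recalled in the excerpt: there are exactly $n$ of them, with standard representatives that can be written down explicitly (the ones appearing in Example \ref{ex:closed}). These representatives are, up to conjugacy, Borel subalgebras $\fb$ spanned by a $\theta$-stable Cartan subalgebra together with root spaces for a system of positive roots that is ``nearly'' the standard upper-triangular one, differing only in how the last index $n$ is woven in (for the orbit $Q_{+,n}$ one gets $\fb_+$ itself, for $Q_{-,n}$ one gets a Borel where index $n$ is moved to the bottom, and for the intermediate closed orbits index $n$ is inserted at some position $p$). For each such representative, $\pi_{n-1}(\fb)$ is spanned by those matrix entries of $\fb$ lying in the upper-left $(n-1)\times(n-1)$ corner.

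The computation to finish is then: for each standard representative $\fb$ of a closed $K_n$-orbit, check that the set of $(k,l)$ with $1\le k,l\le n-1$ and $E_{k,l}$ (or the appropriate Cartan-conjugated root vector) lying in $\fb$ defines a Borel subalgebra of $\fg_{n-1}$ — i.e. that after deleting the last row and column we are left with exactly a full flag's worth of entries, $\binom{n-1}{2}+(n-1)=\binom{n}{2}$ of them, forming a solvable subalgebra containing a Cartan of $\fg_{n-1}$. This is a finite check: the only subtlety is that a priori deleting row and column $n$ from an $n$-dimensional Borel could in principle drop below the dimension $\binom{n}{2}$ of an $(n-1)$-dimensional Borel, or fail to be closed under bracket. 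The main obstacle is therefore purely bookkeeping — one must confirm that for a \emph{closed} $K_n$-orbit the index $n$ always sits at an ``extreme'' position in the flag (top or bottom of the relevant sub-flag), so that erasing it leaves a genuine complete flag in $\C^{n-1}$ rather than a partial one; for non-closed orbits this fails, which is exactly why closedness is hypothesized. Once the positions of $n$ in the standard closed-orbit representatives are made explicit (as in Example \ref{ex:closed}), this verification is immediate and the lemma follows.
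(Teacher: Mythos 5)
Your plan is sound: use $K_n$-equivariance of $\pi_{n-1}$ to reduce to the standard closed-orbit representatives $\fb_i=\Ad(w^{-1})\fb_0$ of Example \ref{ex:closed}, and verify the claim by a finite combinatorial computation. The equivariance identity $\pi_{n-1}(\Ad(k)x)=\Ad(k_1)\pi_{n-1}(x)$ (with $k_1$ the $GL(n-1,\C)$-component of $k\in K_n$) is exactly the reduction you need, and since each $\fb_i$ is a permutation conjugate of $\fb_0$ it is spanned by the diagonal Cartan together with certain $E_{kl}$, so the verification goes through.

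Two of your side remarks, however, are off. The heuristic that ``$n$ sits at an extreme position (top or bottom of the sub-flag)'' is misleading: in $\mathcal{F}_i=(e_1\subset\dots\subset e_{i-1}\subset e_n\subset e_i\subset\dots\subset e_{n-1})$, the vector $e_n$ occupies an arbitrary position $1\le i\le n$. What makes the computation work is rather that the remaining basis vectors $e_1,\dots,e_{n-1}$ always appear in their natural order, so that for $k,l\le n-1$ one has $E_{kl}\in\fb_i$ if and only if $k\le l$, and $\pi_{n-1}(\fb_i)$ is precisely the upper-triangular Borel of $\fg_{n-1}$. More importantly, your assertion that ``for non-closed orbits this fails, which is exactly why closedness is hypothesized'' is false: taking $n=3$, the orbit $Q_{1,2}$ has length one, so is not closed, yet its representative $\fb_{1,2}$ (the stabilizer of the flag $(e_1+e_3\subset e_1\subset e_2)$) has $\pi_2(\fb_{1,2})$ equal to the upper-triangular Borel of $\fgl(2,\C)$. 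Closedness is sufficient but not necessary for the conclusion; the projection does fail to be a Borel only at the extreme, namely for the open orbit $Q_{1,n}$, where $\pi_{n-1}(\fb_{1,n})$ is all of $\fg_{n-1}$.

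There is also a shorter, representative-free route using the machinery of Section \ref{s:Kgeom}. By Proposition \ref{prop:closed}, if $Q$ is closed then every $\fb\in Q$ is $\theta$-stable, so $\fb=(\fb\cap\fk_n)\oplus(\fb\cap\fp_n)$. Since $\fp_n$ consists of matrices supported in the last row and column, $\pi_{n-1}(\fb\cap\fp_n)=0$; and by Lemma \ref{l:2closed}, $\fb\cap\fk_n$ is a Borel subalgebra of $\fk_n=\fgl(n-1,\C)\oplus\fgl(1,\C)$, hence of the form $\fb'\oplus\fgl(1,\C)$ with $\fb'$ a Borel subalgebra of $\fg_{n-1}$. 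Thus $\pi_{n-1}(\fb)=\pi_{n-1}(\fb\cap\fk_n)=\fb'$. Your explicit approach buys a concrete identification of $\pi_{n-1}(\fb_i)$ as the standard Borel; the $\theta$-stability argument is shorter, dispenses with the classification of closed orbits and the equivariance reduction, and makes structurally transparent where closedness enters.
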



We can use Lemma \ref{l:closed} to give an inductive construction of
 special subvarieties of $\B_{n}$ by linking together closed $K_{i}$-orbits $Q_{i}$ for $i=2,\dots, n$.  For this construction, we view $K_{i}\subset K_{i+1}$ by embedding $K_{i}$ in the upper left corner of $K_{i+1}$.  We also make use of the following notation.  If $\fm\subset\fg$ is a subalgebra, we denote by $\fm_{i}$ the image of $\fm$ under the projection $\pi_{i}:\fg\to\fg_{i}$.  

Suppose we are given a sequence $\mathcal{Q}=(Q_{2},\dots, Q_{n})$ with $Q_{i}$ a closed $K_{i}$-orbit in $\mathcal{B}_{i}$.  We call $\calQ$ a sequence of closed $K_{i}$-orbits.  For $\fb\in Q_{n}$, $\fb_{n-1}$ is a Borel subalgebra by
 Lemma \ref{l:closed}.  Since $K_{n}$ acts transitively
on $\mathcal{B}_{n-1}$, there is $k\in K_{n}$ 
such that $\Ad(k)\fb_{n-1}\in Q_{n-1}$ and the variety 
$$
X_{Q_{n-1}, Q_{n}}:=\{\fb\in \B_{n}:\fb\in Q_{n},\,  \fb_{n-1}\in Q_{n-1}\} 
$$
is nonempty.  Lemma \ref{l:closed} again implies that $(\Ad(k)\fb_{n-1})_{n-2}=(\Ad(k)\fb)_{n-2}$ is a Borel subalgebra in $\fg_{n-2}$, so that there exists an $l\in K_{n-1}$ such that $\Ad(l)(\Ad(k)\fb)_{n-2}\in Q_{n-2}$.  Since $K_{n-1}\subset K_{n}$, the variety 
$$X_{Q_{n-2}, Q_{n-1}, Q_{n}}=\{\fb\in\B_{n}: \fb\in Q_{n},\,  \fb_{n-1}\in Q_{n-1}, \, \fb_{n-2}\in Q_{n-2}\} $$
 is nonempty.  Proceeding in this fashion, we can define a nonempty closed subvariety of $\B_{n}$ by 
 \begin{equation}\label{eq:orbittower}
 \orbittower=\{\fb\in \B_{n}: \fb_{i}\in Q_{i},\, 2\leq i\leq n\}.
 \end{equation}


 \begin{thm}\label{thm:areborels}(\cite{CEKorbs}, Theorem 4.2)
 Let $\calQ=(Q_{2},\dots, Q_{n})$ be a sequence of closed $K_{i}$-orbits.  Then the variety $\orbittower$ is a single Borel subalgebra of $\fg$ that contains the standard Cartan subalgebra of diagonal matrices.  Moreover, if $\fb\subset\fg$ is a Borel subalgebra which contains the diagonal matrices, then $\fb=\orbittower$ for some sequence of closed $K_{i}$-orbits $\calQ$.  
 \end{thm}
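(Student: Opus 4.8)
The plan is to prove both directions of Theorem~\ref{thm:areborels} by induction on $n$, using Lemma~\ref{l:closed} as the engine. First I would establish that $\orbittower$ is a single Borel subalgebra. The nonemptiness of $\orbittower$ is already arranged by the inductive construction preceding the statement, so the point is to show it is a \emph{single} Borel and that it contains the diagonal Cartan $\fh$. Given $\fb \in \orbittower$, Lemma~\ref{l:closed} tells us $\fb_{n-1} = \pi_{n-1}(\fb)$ is a Borel subalgebra of $\fg_{n-1}$, and by construction $\fb_{n-1} \in Q_{n-1}$; hence $\fb_{n-1} = X_{\calQ'}$ for the truncated sequence $\calQ' = (Q_2, \dots, Q_{n-1})$, which by the inductive hypothesis is a \emph{single} Borel of $\fg_{n-1}$ containing the diagonal matrices of $\fg_{n-1}$. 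So the map $\fb \mapsto \fb_{n-1}$ is constant on $\orbittower$; call its common value $\fc \subset \fg_{n-1}$. Thus every $\fb \in \orbittower$ lies in the fiber $\pi_{n-1}^{-1}(\fc) \cap Q_n$, i.e.\ in the single $K_n$-orbit $Q_n$ and projecting onto a fixed Borel $\fc$ of $\fg_{n-1}$. The key finiteness input is that a closed $K_n$-orbit $Q_n$ is a flag variety for $K_n$ (all Borels in $Q_n$ contain a common $\theta$-stable Cartan, and $K_n$ acts with the Borels of $\fk_n$ as parametrizing data), and the fiber of $\pi_{n-1}$ over a fixed Borel of $\fg_{n-1}$, intersected with $Q_n$, should be a single point: a Borel of $\fg_n$ lying over a prescribed Borel $\fc$ of $\fg_{n-1}$ and in a prescribed closed $K_n$-orbit is uniquely determined, because the data of $\fc$ pins down the $\theta$-stable Cartan and $Q_n$ pins down the remaining choice. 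I would verify this on the explicit standard representatives of the closed $K_n$-orbits (the ones in Example~\ref{ex:closed}), where it is a direct matrix computation. Since $\fc \supset \ft \cap \fg_{n-1}$ (the diagonal matrices of $\fg_{n-1}$) and the sole remaining coordinate direction $E_{n,n}$ is forced to lie in $\fb$ as well, we get $\fh \subset \fb = \orbittower$.

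For the converse, let $\fb \subset \fg$ be a Borel subalgebra containing $\fh$. I would again induct: since $\fb \supset \fh$, the orbit $K_n \cdot \fb$ has a $\theta$-stable Cartan, and in fact any Borel containing a $\theta$-stable Cartan on which $\theta$ acts by the standard diagonal involution lies in a closed $K_n$-orbit — this is a standard fact for closed $K$-orbits (they are exactly the orbits through $\theta$-stable Borels), so $Q_n := K_n \cdot \fb$ is closed. By Lemma~\ref{l:closed}, $\fb_{n-1} = \pi_{n-1}(\fb)$ is a Borel of $\fg_{n-1}$, and it contains the diagonal matrices of $\fg_{n-1}$ since $\pi_{n-1}(\fh \cap \fg) \supset \ft_{n-1}$. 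By the inductive hypothesis, $\fb_{n-1} = X_{\calQ'}$ for a sequence of closed $K_i$-orbits $\calQ' = (Q_2, \dots, Q_{n-1})$. Setting $\calQ = (Q_2, \dots, Q_{n-1}, Q_n)$, every $\fb' \in \orbittower$ satisfies $\fb'_i \in Q_i$ for all $i$; in particular $\fb \in \orbittower$, and by the first part $\orbittower$ is a single Borel, so $\fb = \orbittower$, as desired.

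The main obstacle is the uniqueness statement in the first part: showing that a closed $K_n$-orbit $Q_n$ together with a prescribed Borel $\fc \subset \fg_{n-1}$ in the allowed orbit determines the Borel of $\fg_n$ uniquely, i.e.\ that $\pi_{n-1}^{-1}(\fc) \cap Q_n$ is a single point. One has to be careful that $\pi_{n-1}$ restricted to $Q_n$ is not just surjective onto $\B_{n-1}$ but that the fiber meets $Q_n$ in exactly one flag. I expect this to come down to a concrete analysis using the standard representatives: the closed $K_n$-orbits are indexed in a combinatorially transparent way (essentially by the position of the last basis vector relative to the flag), and once $\fc$ and $Q_n$ are fixed, the flag in $\fg_n$ extending $\fc$ is rigid. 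A clean way to organize this is to note $\dim \orbittower$ is forced to be $0$ by a dimension count — $\dim Q_n = \dim \B_{n-1}$ for a closed orbit, the generic fiber of $\pi_{n-1}|_{Q_n}$ has dimension $\dim Q_n - \dim\B_{n-1} = 0$, and $\orbittower$ being closed and irreducible (it is cut out inside the irreducible $Q_n$ by the single condition $\fb_{n-1} \in Q_{n-1}$, itself a point after truncation) must be a point — but I would still want to exhibit the representative explicitly to be sure the intersection is nonempty, which is exactly what the construction before the theorem statement already guarantees.
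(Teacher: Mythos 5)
Your overall strategy is the right one and is essentially what the paper does: induct on $n$ with Lemma~\ref{l:closed} as the engine, reduce to showing that a closed $K_n$-orbit $Q_n$ meets $\pi_{n-1}^{-1}(\fc)$ in a single point once $\fc = X_{\calQ'}$ is pinned down by induction, and then run the argument backwards for the converse. However, two steps are under-justified as written.

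The first is the crux: you offer a dimension count to show $\pi_{n-1}^{-1}(\fc)\cap Q_n$ is a point, but a closed subvariety of dimension $0$ need not be a single point, and the irreducibility you invoke ("cut out by a single condition") is not automatic for a fiber of a morphism. What actually works, and is the clean version of the computation you say you would do on representatives, is to use $K_n$-equivariance: $\pi_{n-1}|_{Q_n}\colon Q_n\to\B_{n-1}$ is a surjective $K_n$-equivariant map between two homogeneous $K_n$-spaces. Since $Q_n$ is closed, the isotropy group of any $\fb\in Q_n$ in $K_n$ is $K_n\cap B_{\fb}$, which is a Borel subgroup of $K_n$ (Lemma~\ref{l:2closed}); the isotropy group of $\fc\in\B_{n-1}$ is likewise a Borel of $K_n$. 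A $K_n$-equivariant surjection $K_n/B'\to K_n/B''$ with both $B',B''$ Borels of $K_n$ is necessarily a bijection, so every fiber of $\pi_{n-1}|_{Q_n}$ is a single point. This replaces the hedged dimension argument with one that is airtight.

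The second gap is the containment $\fh\subset\orbittower$. You write that $\fc\supset\ft_{n-1}$ and that $E_{n,n}$ "is forced to lie in $\fb$," but $\fc = \pi_{n-1}(\fb)$ is a \emph{projection}, so $\ft_{n-1}\subset\fc$ does not by itself give $\ft_{n-1}\subset\fb$. The missing step: $\fb\in Q_n$ closed means $\fb$ is $\theta$-stable (Proposition~\ref{prop:closed}), so $\fb=(\fb\cap\fk_n)\oplus(\fb\cap\fp_n)$, with $\fb\cap\fk_n$ a Borel of $\fk_n$ and hence containing the central element $E_{n,n}$. Since $\fk_n=\fg_{n-1}\oplus\C E_{n,n}$ and $\pi_{n-1}(\fp_n)=0$, one gets $\pi_{n-1}(\fb)=\fb\cap\fg_{n-1}$, i.e.\ the projection coincides with the intersection. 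Then $\ft_{n-1}\subset\fc=\fb\cap\fg_{n-1}\subset\fb$ and $E_{n,n}\in\fb$, giving $\fh\subset\fb$. With these two repairs, your inductive argument goes through, and the converse direction (every $\fb\supset\fh$ is $\theta_i$-stable for all $i$, hence lies in $\orbittower$ for the sequence $Q_i=K_i\cdot\fb_i$) is correct as you wrote it.
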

 
 \begin{nota}
In light of Theorem \ref{thm:areborels}, we refer to the Borel subalgebras $\orbittower$ as $\borel$ for the remainder of the discussion.
\end{nota}
 
\subsection{Borels containing elements of $\Phi^{-1}(0)_{sreg}$}

Now we can at last describe the Borel subalgebras of $\fg$ that contain elements of $\Phi^{-1}(0)_{sreg}$ and use these to determine the irreducible component decomposition of $\Phi^{-1}(0)_{sreg}$ explicitly.  Since $x\in\Phi^{-1}(0)_{sreg}$ if and only if $x_{i}\in\fg_{i}$ satisfies the two conditions in (\ref{eq:theconditions}) for all $i=2,\dots, n$, Theorem \ref{thm:both} implies:
\begin{prop}\label{p:theborels}(\cite{CEKorbs}, Theorem 4.5)
Let $x\in\Phi^{-1}(0)_{sreg}$.  Then $x\in\borel$, where the sequence of closed $K_{i}$-orbits $\mathcal{Q}=(Q_{2}, \dots, Q_{n})$ has $Q_{i}=Q_{+,i}$ or $Q_{i}=Q_{-,i}$ for each $i=2,\dots, n$.  
\end{prop}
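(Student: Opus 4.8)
The plan is to argue by downward induction on $i$, from $i=n$ to $i=2$, truncating $\fb$ by the corner projections $\pi_i$: at each level one uses Theorem \ref{thm:both} to identify the $K_i$-orbit through the truncated subalgebra, and Lemma \ref{l:closed} to guarantee that the next truncation is again a Borel subalgebra, so the induction can proceed; Theorem \ref{thm:areborels} then assembles the result. I would first record three elementary observations. (i) Since $x\in\Phi^{-1}(0)_{sreg}$, Remark \ref{rem_git} and part (3) of Proposition \ref{p:sreg} show that each $x_i$ is regular nilpotent and that conditions (1) and (2) of (\ref{eq:theconditions}) hold at every level $i=2,\dots,n$; moreover $(x_i)_j=x_j$ for $j\le i$, so in particular the truncation $x_i\in\fg_i$ still satisfies conditions (1) and (2) at level $i$ (and at all lower levels). (ii) The corner projections compose: $\pi_{i-1}(\fm)=\pi_{i-1}(\pi_i(\fm))$ for any subspace $\fm\subset\fg$. (iii) The element $x=x_n$ is nilpotent (indeed regular nilpotent), hence lies in at least one Borel subalgebra $\fb$ of $\fg$; fix one such $\fb$, and write $\fb_i:=\pi_i(\fb)$, with the convention $\fb_n=\fb$.

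With these in hand, I would run the induction. At level $n$: $\fb_n=\fb$ is a Borel subalgebra of $\fg_n$ containing $x_n$, and $x_n$ satisfies conditions (1) and (2) at level $n$, so Theorem \ref{thm:both} gives $K_n\cdot\fb_n=Q_{+,n}$ or $Q_{-,n}$, a \emph{closed} orbit. For the inductive step, suppose $2<i\le n$ and that $\fb_i$ is a Borel subalgebra of $\fg_i$ containing $x_i$, with $K_i\cdot\fb_i\in\{Q_{+,i},Q_{-,i}\}$ (hence closed). Because this orbit is closed, Lemma \ref{l:closed} applies and shows that $\fb_{i-1}=\pi_{i-1}(\fb_i)=\pi_{i-1}(\fb)$ is a Borel subalgebra of $\fg_{i-1}$; it contains $x_{i-1}=\pi_{i-1}(x_i)$; and $x_{i-1}$ satisfies conditions (1) and (2) at level $i-1$ by observation (i). Theorem \ref{thm:both} therefore forces $K_{i-1}\cdot\fb_{i-1}=Q_{+,i-1}$ or $Q_{-,i-1}$, again closed, completing the step. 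At the end of the induction we have, for each $i=2,\dots,n$, a closed $K_i$-orbit $Q_i:=K_i\cdot\fb_i$ equal to $Q_{+,i}$ or $Q_{-,i}$, with $\pi_i(\fb)=\fb_i\in Q_i$.

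To conclude, I would apply Theorem \ref{thm:areborels} to the sequence $\calQ=(Q_2,\dots,Q_n)$ of closed $K_i$-orbits just produced: the variety $\orbittower$ of (\ref{eq:orbittower}), namely $\{\fb'\in\B_n:\fb'_i\in Q_i,\ 2\le i\le n\}$, is then a single Borel subalgebra, which is exactly $\borel$. Since $\fb_i\in Q_i$ for every $i$, the fixed Borel subalgebra $\fb$ is a point of $\orbittower$, so $\fb=\borel$, and therefore $x\in\fb=\borel$ with each $Q_i$ equal to $Q_{+,i}$ or $Q_{-,i}$, which is precisely the assertion.

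The step I expect to require the most care is the inductive bookkeeping: confirming the identity $\pi_{i-1}(\fb_i)=\pi_{i-1}(\fb)$, checking that conditions (1) and (2) descend verbatim to each truncation $x_i\in\fg_i$, and — the structurally essential point — making sure Lemma \ref{l:closed} is invoked at every level, since it is the closedness of the $K_i$-orbit that keeps $\pi_{i-1}(\fb)$ a Borel subalgebra rather than merely a solvable subspace. No input beyond Theorems \ref{thm:both} and \ref{thm:areborels} and Lemma \ref{l:closed} is needed.
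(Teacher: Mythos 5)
Your proof is correct and matches the argument the paper intends: the paper's terse phrase ``Theorem \ref{thm:both} implies'' is precisely unpacked by your downward induction, using Lemma \ref{l:closed} to keep each truncation $\pi_{i-1}(\fb)$ a Borel subalgebra and Theorem \ref{thm:areborels} to identify the resulting Borel with $\borel$. The one point worth reiterating is the one you flagged yourself, namely that it is the closedness of each $K_i$-orbit (via Lemma \ref{l:closed}) that allows the induction to descend, since otherwise $\pi_{i-1}(\fb)$ would merely be solvable rather than Borel.
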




 \begin{exam}\label{ex:first}
It is easy to describe explicitly these Borel subalgebras. For example, for $\fg=\fgl(3,\C)$ there are four such Borel subalgebras:
  $$
\begin{array}{cc}
\begin{array}{c}
\mathfrak{b}_{Q_{-},Q_{-}}=\left[\begin{array}{ccc} 
h_{1} & 0 & 0 \\
a_{1}& h_{2} &0\\
a_{2}& a_{3} & h_{3}\end{array}\right]
 \end{array}
&
\begin{array}{c}
\mathfrak{b}_{Q_{+}, Q_{+}}=\left[\begin{array}{ccc} 
h_{1} & a_{1} & a_{2} \\
0 & h_{2} & a_{3}\\
0 & 0 & h_{3}\end{array}\right]
\end{array}
\\
& \\
\begin{array}{c}
\mathfrak{b}_{Q_{+},Q_{-}}=\left[\begin{array}{ccc} 
h_{1}& a_{1} & 0 \\
0 & h_{2} & 0\\
a_{2} & a_{3} & h_{3}\end{array}\right]
\end{array}
&
\begin{array}{c}
\mathfrak{b}_{Q_{-},Q_{+}}=\left[\begin{array}{ccc} 
h_{1} & 0 & a_{1} \\
a_{2} & h_{2}& a_{3}\\
0 & 0 & h_{3}\end{array}\right]
\end{array}
\end{array}_{\mbox{,}}
$$
where $a_{i}, h_{i}\in\C$.  
\end{exam}

We can use these Borel subalgebras to describe the fiber $\Phi^{-1}(0)_{sreg}$.  
Let $\nilrad^{reg}$ be the subvariety of regular nilpotent elements of $\borel$.  
Proposition \ref{p:theborels} implies,
\begin{equation}\label{eq:inclusion}
\Phi^{-1}(0)_{sreg}\subseteq\coprod_{\calQ} \nilrad^{reg},
\end{equation}
where $\calQ=(Q_{2},\dots, Q_{n})$ ranges over all $2^{n-1}$ sequences
 where $Q_{i}=Q_{+,i}$ or $Q_{-,i}$.  We note that the union on the right side of (\ref{eq:inclusion}) is disjoint, since a regular nilpotent element is contained in a unique Borel subalgebra (see for example \cite{CG}, Proposition 3.2.14).  We claim that the inclusion in (\ref{eq:inclusion}) is an equality and that the right side of (\ref{eq:inclusion}) is an irreducible component decomposition of the variety $\Phi^{-1}(0)_{sreg}$.  The key observation is the converse to Proposition \ref{p:theborels}.
 \begin{prop}(\cite{CEKorbs}, Proposition 3.11, Theorem 4.5)
 Let $\mathcal{Q}=(Q_{2},\dots, Q_{n})$ be a sequence of closed $K_{i}$-orbits with $Q_{i}=Q_{+,i}$ or $Q_{-,i}$.  Let $\nilrad^{reg}$ be the regular nilpotent elements of $\borel$.  Then $\nilrad^{reg}\subset \Phi^{-1}(0)_{sreg}$.
 \end{prop}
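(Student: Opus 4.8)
The plan is to verify that an element $x\in\nilrad^{reg}$ satisfies both conditions in (\ref{eq:theconditions}) for every $i=2,\dots,n$, which by Remark \ref{rem_git} and Part (3) of Proposition \ref{p:sreg} is exactly the statement that $x\in\Phi^{-1}(0)_{sreg}$. The first reduction is to restrict attention to the bottom of the tower: since $\borel=\orbittower$ with $\fb_i\in Q_i$ and $Q_i=Q_{\pm,i}$, and since $\pi_i$ carries regular nilpotent elements of $\borel$ into $\fg_i$, it suffices to prove the ``top step'' assertion --- namely, if $\fb\subset\fg=\fg_n$ generates $Q_{+,n}$ or $Q_{-,n}$ and $x\in\fn^{reg}$, then $x_{n-1}\in\fg_{n-1}$ is regular nilpotent and $\fz_{\fg_{n-1}}(x_{n-1})\cap\fz_{\fg_n}(x_n)=0$ --- and then apply this inductively down the tower, using Lemma \ref{l:closed} at each stage to know $\fb_i$ is again a Borel generating the corresponding closed $K_i$-orbit $Q_{\pm,i}$.

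First I would handle the nilpotency and regularity of $x_{n-1}$. For $Q_{+,n}$ we may take $\fb=\fb_+$, the upper triangular matrices, and then $x$ is a regular nilpotent upper triangular matrix; the projection $\pi_{n-1}$ simply deletes the last row and column, and I would check directly that for such $x$ the submatrix $x_{n-1}$ is still regular nilpotent (its superdiagonal entries, which determine regularity of a nilpotent upper triangular matrix, are unaffected by deleting the last row and column). The case $Q_{-,n}$ is the transpose of this. Since the conditions in (\ref{eq:theconditions}) are $\Ad(K_n)$-equivariant and $\pi_{n-1}$ is $K_n$-equivariant, this suffices for a general $\fb\in Q_{\pm,n}$. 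Next, for the second condition, I would compute $\fz_{\fg_n}(x)$ for $x$ regular nilpotent in $\fb_+$: it is the $n$-dimensional abelian algebra spanned by $x^0=I,x,\dots,x^{n-1}$ (equivalently, polynomials in $x$), and similarly $\fz_{\fg_{n-1}}(x_{n-1})$ is spanned by $x_{n-1}^0,\dots,x_{n-1}^{n-2}$ inside $\fg_{n-1}$. Regarding $\fg_{n-1}\subset\fg_n$ in the usual corner embedding, I would show a nonzero element $p(x_{n-1})\in\fg_{n-1}$ that also centralizes $x_n=x$ would force $p(x)$ to be supported in the top-left $(n-1)\times(n-1)$ block; writing $p(x)=q(x)$ for a polynomial $q$ of degree $\le n-2$ in $x$ and comparing entries, the last column (or the entry in position $(1,n)$, using that $x$ is a \emph{regular} nilpotent so $x^{k}$ has a nonzero $(1,k+1)$ entry) forces $q=0$. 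This is essentially the computation underlying Proposition \ref{p:centralizers}, run in the direction that the \emph{opposite} of that proposition's hypothesis --- namely $\fb$ generating the upper or lower triangular orbit --- gives vanishing rather than nonvanishing.

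I expect the main obstacle to be the bookkeeping in the inductive descent: one must be careful that after conjugating $\fb_{n-1}$ by some $k\in K_n$ into $Q_{\pm,n-1}$, the element $x_{n-1}$ (which lives in a \emph{fixed} copy of $\fg_{n-1}$, not moved by $K_n$) is correctly tracked, and that the $K_i$-equivariance arguments really do let one reduce every step to the standard triangular representatives simultaneously. The cleanest way around this is to observe, using Theorem \ref{thm:areborels} and Example \ref{ex:first}-style explicit descriptions, that $\borel$ for $\calQ$ with all $Q_i=Q_{\pm,i}$ is a \emph{single} explicit Borel subalgebra containing the diagonal Cartan, so that one can fix one representative and verify (\ref{eq:theconditions}) for all $i$ at once rather than orbit by orbit; the entries of $x_i=\pi_i(x)$ are then literally a sub-block of the entries of $x$, and both the regular-nilpotency of each $x_i$ and the centralizer-intersection vanishing become finite explicit matrix computations. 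Finally, combining $\Phi^{-1}(0)_{sreg}\subset\coprod_\calQ \nilrad^{reg}$ from (\ref{eq:inclusion}) with the reverse inclusion just established gives equality; since each $\nilrad^{reg}$ is irreducible (it is open and dense in the nilradical $\nilrad$, which is a linear space) and the union is disjoint, this is automatically the decomposition of $\Phi^{-1}(0)_{sreg}$ into its $2^{n-1}$ irreducible components, recovering the count of Corollary \ref{c:nilp}.
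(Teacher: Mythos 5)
Your proof is essentially correct, and the strategy --- verify the two conditions of \eqref{eq:theconditions} level by level, reducing at each level to the standard upper-triangular (or lower-triangular) representative by $K_i$-equivariance, then do the superdiagonal and centralizer computation for $\fb_{+}$ explicitly --- is exactly the route suggested by the framework the paper develops around Propositions~\ref{p:sreg}, \ref{p:centralizers}, Theorem~\ref{thm:both}, and the construction of $\borel$. (The expository paper itself omits the proof and cites \cite{CEKorbs}, but the surrounding machinery makes the intended argument clear.)

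Two points deserve cleaning up. First, the worry you flag about ``bookkeeping in the inductive descent'' is not a real obstruction and does not require the fallback to an explicit global description of $\borel$. The top-step assertion at level $i$ is a self-contained statement about the pair $(\fb_i, x_i)$, and it is genuinely $K_i$-equivariant: since $K_i = GL(i-1,\C)\times GL(1,\C)$ preserves the corner copy $\fg_{i-1}\subset\fg_i$, conjugation by $k\in K_i$ satisfies $\pi_{i-1}\circ\Ad(k)=\Ad(\pi_{i-1}(k))\circ\pi_{i-1}$ with $\pi_{i-1}(k)\in G_{i-1}$, so both the regular-nilpotency of $x_{i-1}$ and the intersection $\fz_{\fg_{i-1}}(x_{i-1})\cap\fz_{\fg_i}(x_i)$ transform equivariantly and independently of what happens at other levels. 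The element $x_{i-1}$ is \emph{not} fixed by $K_i$; it is moved by the $GL(i-1,\C)$-factor, and that is exactly what makes the reduction work. One should also state explicitly the inductive hypothesis being carried: $x_i\in\fn_i^{reg}$, which holds because $\pi_i(\nilrad)\subset\fn_i$ (the nilradical $\nilrad$ is off-diagonal since $\borel\supset\fh$) together with the fact that $x_i$ is regular nilpotent, proved at step $i+1$. Also, the fact that $\fb_i\in Q_{\pm,i}$ follows from the defining property of $\borel = \orbittower$ in \eqref{eq:orbittower}, not from Lemma~\ref{l:closed} alone.

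Second, the phrase ``writing $p(x)=q(x)$ for a polynomial $q$ of degree $\le n-2$'' is garbled: a priori one only knows $p(x_{n-1})=q(x)$ for some $q$ of degree $\le n-1$, and the degree restriction is a \emph{consequence} of the computation, not an input. The clean version is the stronger statement $\fz_{\fg_n}(x)\cap\fg_{n-1}=0$ for $x$ upper-triangular regular nilpotent: any $q(x)=\sum_{k=0}^{n-1}a_k x^k$ lying in $\fg_{n-1}$ has zero last column, and reading entries $(n,n),(n-1,n),\dots,(1,n)$ from the bottom up forces $a_0=a_1=\cdots=a_{n-1}=0$ because each superdiagonal entry $x_{j,j+1}$ is nonzero; this a fortiori gives condition~(2). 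With these clarifications the argument is complete. The final paragraph (deriving the irreducible-component decomposition of $\Phi^{-1}(0)_{sreg}$) goes beyond the stated proposition, but correctly reproduces Theorem~\ref{thm:nilpotent}.
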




 Thus, the variety $\nilrad^{reg}$ is an irreducible subvariety of $\Phi^{-1}(0)_{sreg}$ of dimension $\dim \nilrad=\dn$.  It follows from Theorem \ref{thm:irred} that $\nilrad^{reg}$ is an open subvariety of a unique irreducible component, $\mathcal{Y}$ of $\Phi^{-1}(0)_{sreg}$.  But then by (\ref{eq:inclusion}), we have
\begin{equation*} 
\mathcal{Y}=\coprod_{\calQ^{\prime}}\fn_{\calQ^{\prime}}^{reg},
\end{equation*}
where the disjoint union is taken over a subset of the set of all sequences $(Q_{2}^{\prime},\dots, Q_{n}^{\prime})$ with $Q^{\prime}_{i}=Q_{+,i}$ or $Q_{-,i}$.  Since $\mathcal{Y}$ is irreducible, we must have $\nilrad^{reg}=\mathcal{Y}$.  This yields the main theorem of \cite{CEKorbs}.

\begin{thm}\label{thm:nilpotent}(\cite{CEKorbs}, Theorem 4.5)
 The irreducible component decomposition of the variety $\Phi^{-1}(0)_{sreg}$ is
\begin{equation}\label{eq:nilfibre}
\Phi^{-1}(0)_{sreg}=\coprod_{\calQ} \nilrad^{reg},
\end{equation}
where $\calQ=(Q_{2},\dots, Q_{n})$ ranges over all $2^{n-1}$ sequences
 where $Q_{i}=Q_{+,i}$ or $Q_{-,i}$.
The $A$-orbits in $\Phi^{-1}(0)_{sreg}$ are exactly the irreducible
components
$\nilrad^{reg}$, for $\calQ$ as above.
\end{thm}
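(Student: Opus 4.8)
The plan is to prove (\ref{eq:nilfibre}) first as an equality of sets and then to upgrade it to an irreducible-component decomposition, using the smoothness and equidimensionality supplied by Theorem \ref{thm:irred}. For the set equality, the inclusion $\Phi^{-1}(0)_{sreg}\subseteq\coprod_{\calQ}\nilrad^{reg}$ is precisely Proposition \ref{p:theborels}, which rests on the $K_i$-orbit analysis of Theorems \ref{thm:1closed} and \ref{thm:both}; the reverse inclusion $\nilrad^{reg}\subseteq\Phi^{-1}(0)_{sreg}$ for each sequence $\calQ$ with $Q_i=Q_{\pm,i}$ is the converse statement cited from \cite{CEKorbs}, where one checks that a regular nilpotent element $x\in\borel$ satisfies both conditions in (\ref{eq:theconditions}), i.e. that each $\pi_{i-1}(x)=x_{i-1}$ remains regular nilpotent and that $\fz_{\fg_{i-1}}(x_{i-1})\cap\fz_{\fg_i}(x_i)=0$. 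The union is disjoint because a regular nilpotent element lies in a unique Borel subalgebra (\cite{CG}, Proposition 3.2.14), and the $2^{n-1}$ subalgebras $\borel$ coming from sequences with $Q_i=Q_{\pm,i}$ are pairwise distinct — visibly so in the $n=3$ case of Example \ref{ex:first}, and in any event forced by the count of $A$-orbits in Corollary \ref{c:nilp}. This establishes (\ref{eq:nilfibre}) as a disjoint union of sets.

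To promote this to an irreducible-component decomposition, observe that each $\nilrad=[\borel,\borel]$ is an affine space of dimension $\dn$ and $\nilrad^{reg}$ is the open dense locus of regular elements in it; hence $\nilrad^{reg}$ is smooth, irreducible, of dimension $\dn$, and locally closed in $\fg$, so it is open in its closure inside $\Phi^{-1}(0)_{sreg}$. By Theorem \ref{thm:irred}, $\Phi^{-1}(0)_{sreg}$ is smooth of pure dimension $\dn$ with finitely many irreducible components, which are therefore its connected components, hence open and closed. Consequently the closure of $\nilrad^{reg}$ in $\Phi^{-1}(0)_{sreg}$ is an irreducible component $\mathcal{Y}$, and $\nilrad^{reg}$ is open and dense in $\mathcal{Y}$. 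Writing $\mathcal{Y}=\coprod_{\calQ^{\prime}}(\mathcal{Y}\cap\fn_{\calQ^{\prime}}^{reg})$ via the set equality, each term is open in $\fn_{\calQ^{\prime}}^{reg}$ because $\mathcal{Y}$ is open in $\Phi^{-1}(0)_{sreg}$; for $\calQ^{\prime}\neq\calQ$ this term is disjoint from the dense open subset $\nilrad^{reg}$ of $\mathcal{Y}$, hence contained in the proper closed subset $\mathcal{Y}\setminus\nilrad^{reg}$, which has dimension $<\dn$ — impossible for a nonempty open subset of the irreducible $\dn$-dimensional variety $\fn_{\calQ^{\prime}}^{reg}$. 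Therefore $\mathcal{Y}=\nilrad^{reg}$, so each $\nilrad^{reg}$ is an irreducible component and (\ref{eq:nilfibre}) is the irreducible-component decomposition. The last assertion then follows immediately, since Theorem \ref{thm:irred} identifies the $A$-orbits in $\Phi^{-1}(0)_{sreg}$ with its irreducible components, which we have just shown to be the varieties $\nilrad^{reg}$.

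The substantive work is concentrated in the two inclusions of the first step, and the harder of these is the converse $\nilrad^{reg}\subseteq\Phi^{-1}(0)_{sreg}$: verifying $\fz_{\fg_{i-1}}(x_{i-1})\cap\fz_{\fg_i}(x_i)=0$ for a regular nilpotent $x\in\borel$ is exactly the kind of statement that requires the explicit block structure of $\borel$ obtained by linking the closed $K_i$-orbits $Q_{\pm,i}$, as displayed in Example \ref{ex:first}. Once that and Proposition \ref{p:theborels} are in hand, the passage from the set-theoretic identity to the irreducible-component decomposition is essentially formal, the one point to watch being the distinction between $\nilrad^{reg}$ being dense in its component and being open in it, which is settled by the remark that $\nilrad^{reg}$ is open in the linear space $\nilrad$ and hence locally closed in $\fg$.
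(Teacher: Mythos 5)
Your proof is correct and follows essentially the same route as the paper's: it invokes Proposition \ref{p:theborels} for the forward inclusion, the converse proposition for the reverse inclusion, uniqueness of the Borel containing a regular nilpotent for disjointness, and then Theorem \ref{thm:irred} to upgrade the disjoint set-theoretic decomposition to an irreducible-component decomposition. The only difference is in the final step, where the paper concludes directly from the irreducibility of $\mathcal{Y}$ that $\mathcal{Y}=\nilrad^{reg}$ (implicitly using that each connected $\fn_{\calQ'}^{reg}$ lies wholly inside or outside a given open-and-closed component), whereas you spell this out via a dimension count on $\mathcal{Y}\setminus\nilrad^{reg}$; both are valid and the extra care is harmless.
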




The description of $\Phi^{-1}(0)_{sreg}$ in Equation (\ref{eq:nilfibre}) is much more explicit than the one given in Corollary \ref{c:nilp}, where the components are described as orbits of the group $Z=(\C^{\times})^{n-1}\times\C^{\dn-n+1}$ where $Z$ acts via the formula in Equation (\ref{eq:newact}).  In fact, we can describe easily the varieties $\nilrad^{reg}\cong (\C^{\times})^{n-1}\times\C^{\dn-n+1}$. 

\begin{exam}
For $\fg=\fgl(3,\C)$, Theorem \ref{thm:nilpotent} implies that the four $A$-orbits in $\Phi^{-1}(0)_{sreg}$ are the regular nilpotent elements of the four Borel subalgebras given in Example \ref{ex:first}.  
  $$
\begin{array}{cc}
\begin{array}{c}
\mathfrak{n}_{Q_{-},Q_{-}}^{reg}=\left[\begin{array}{ccc} 
0 & 0 & 0 \\
a_{1}& 0 &0\\
a_{3}& a_{2} & 0\end{array}\right]
 \end{array}
&
\begin{array}{c}
\mathfrak{n}_{Q_{+}, Q_{+}}^{reg}=\left[\begin{array}{ccc} 
0 & a_{1} & a_{3} \\
0 & 0 & a_{2}\\
0 & 0 & 0\end{array}\right]
\end{array}
\\
& \\
\begin{array}{c}
\mathfrak{n}_{Q_{+},Q_{-}}^{reg}=\left[\begin{array}{ccc} 
0& a_{1} & 0 \\
0 & 0& 0\\
a_{2} & a_{3} & 0\end{array}\right]
\end{array}
&
\begin{array}{c}
\mathfrak{n}_{Q_{-},Q_{+}}^{reg}=\left[\begin{array}{ccc} 
0 & 0 & a_{1} \\
a_{2} & 0& a_{3}\\
0 & 0 & 0\end{array}\right]
\end{array}
\end{array}_{\mbox{,}}
$$
where $a_{1}, \, a_{2}\in\C^{\times}$ and $a_{3}\in\C$.  
\end{exam}

\begin{rem}\label{r:reptheory}
 We note that the $2^{n-1}$ Borel subalgebras appearing in Theorem \ref{thm:nilpotent} 
are exactly the Borel subalgebras $\fb$ with the property that 
each projection of $\fb$ to
$\fgl(i,\C)$ for $i=2, \dots, n$ is a Borel subalgebra of $\fg_{i}$
whose $K_{i}$-orbit in $\B_{i}$ is related via the Beilinson-Bernstein
correspondence to Harish-Chandra modules for the pair $(\fgl(i,\C),
K_{i})$ coming from holomorphic and anti-holomorphic discrete series.
It would be interesting to relate our results to representation
theory, especially to work of Kobayashi \cite{Kobsur}.
For more on the relation between geometry of orbits of a symmetric subgroup
and Harish-Chandra modules, see
\cite{Vg}, \cite{HMSW}, \cite{Collingwood}.
\end{rem}

\subsection{Strongly Regular Elements and Borel subalgebras}

It would be interesting to study strongly regular fibers $\Phi^{-1}(c)_{sreg}$ for arbitrary $c\in\C^{\dnone}$ using the geometry of $K_{i}$-orbits on $\B_{i}$.  The following result is a step in this direction.
\begin{thm}\label{thm:sregandborels}(\cite{CEKorbs}, Theorem 5.3) 
Every Borel subalgebra $\fb\subset\fg$ contains strongly regular elements. 
\end{thm}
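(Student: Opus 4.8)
The plan is to use the inductive orbit-tower description of Borel subalgebras from Theorem \ref{thm:areborels}, combined with the Hessenberg cross-section from Theorem \ref{thm:Hess}, to reduce the statement to Borel subalgebras containing the standard Cartan. First I would observe that since every Borel subalgebra $\fb\subset\fg$ is $G$-conjugate to one containing the diagonal matrices $\fh$, and since $\fg_{sreg}$ and the family of Gelfand-Zeitlin functions are \emph{not} $G$-invariant, this conjugation does \emph{not} immediately reduce the general case to the standard-Cartan case; so a genuine argument is needed. However, by Theorem \ref{thm:areborels}, it does suffice to treat those $\fb$ containing $\fh$, provided one can handle an arbitrary Borel containing $\fh$ rather than only the $2^{n-1}$ special ones appearing in Theorem \ref{thm:nilpotent}. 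So the real content is: for every sequence $\calQ = (Q_2,\dots,Q_n)$ of closed $K_i$-orbits, the Borel $\borel$ contains a strongly regular element.

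The key step is an inductive construction of strongly regular elements inside $\borel$ that mirrors the construction of $\borel$ itself. Fix $\calQ$ and write $\fb = \borel$, with $\fb_i = \pi_i(\fb)$ the corresponding Borel in $\fg_i$ (these are Borels by Lemma \ref{l:closed}, applied repeatedly). I would build $x \in \fb$ by choosing its entries so that each truncation $x_i \in \fb_i$ is regular in $\fg_i$ and the transversality condition $\fz_{\fg_i}(x_i)\cap\fz_{\fg_{i+1}}(x_{i+1}) = 0$ of Proposition \ref{p:sreg}(3) holds for all $i$. The natural candidate is a ``Hessenberg-type'' element relative to $\fb$: since $\fb_i$ is a Borel of $\fg_i$ containing $\fh$, the subspace $\fb_i$ together with the appropriate simple negative root vectors (those corresponding to the $K_i$-orbit structure) spans a copy of the Hessenberg variety pattern, so by (the proof of) Theorem \ref{thm:Hess} one can realize \emph{any} tuple of Ritz values, in particular generic ones, by an element whose $i$-th truncation lies in $\fb_i$. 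Choosing the eigenvalue data generically — e.g., so that $\sigma_i(x_i)\cap\sigma_{i+1}(x_{i+1}) = \emptyset$ for all $i$, i.e. landing in the set $\fg_\Theta$ of Theorem \ref{thm:theta} — forces strong regularity automatically, since elements of $\fg_\Theta$ are strongly regular.

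Thus the concrete plan is: (i) show that for each Borel $\fb_i$ of $\fg_i$ containing $\fh$, and each prescribed characteristic polynomial data, $\fb_i$ meets the corresponding fiber of $\Phi_i$ — this follows because the union of such $\fb_i$'s over all orbit-tower sequences in $\fg_i$ is all Borels containing $\fh$, and conjugating the companion/Hessenberg realization appropriately lands in the right one; (ii) patch these compatibly up the tower using the $K_i \subset K_{i+1}$ nesting exactly as in the construction of $\orbittower$, so that the resulting $x$ has $x_i \in \fb_i$ for all $i$; (iii) choose the free eigenvalue parameters so that $x \in \fg_\Theta$, whence $x \in \fg_{sreg}$ by Theorem \ref{thm:theta}. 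The main obstacle I anticipate is step (i)–(ii): ensuring that one can \emph{simultaneously} control all truncations $x_i$ to lie in the prescribed $\fb_i$ while the eigenvalues remain free parameters. The difficulty is that fixing $x_i \in \fb_i$ constrains $x_{i+1}$, and one must verify that the remaining freedom in $x_{i+1}$ (the last row and column entries, plus the diagonal) is enough both to land in $\fb_{i+1}$ and to adjust $\sigma_{i+1}(x_{i+1})$ to avoid $\sigma_i(x_i)$. Here the Hessenberg isomorphism of Theorem \ref{thm:Hess}, applied within each $\fg_i$ after transporting $\fb_i$ to a standard position, is exactly the tool that guarantees this freedom; making the transport compatible across all $i$ at once is the technical heart of the argument.
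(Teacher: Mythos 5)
Your proposal has two genuine gaps: a missing reduction and a step that cannot succeed.

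The missing idea is the reduction to Borel subalgebras containing the standard Cartan $\fh$. You correctly observe that $G$-conjugation does not suffice, since $\fg_{sreg}$ and the Gelfand-Zeitlin functions are not $G$-invariant, and you say ``a genuine argument is needed''; but the next sentence invokes Theorem \ref{thm:areborels} as if it supplied that argument, which it does not. Theorem \ref{thm:areborels} only parameterizes the Borel subalgebras containing $\fh$; it is silent about the others. The paper obtains the reduction by an $H$-orbit closure argument: one shows the locus $Y$ of Borels not meeting $\fg_{sreg}$ is Zariski-closed and $H$-invariant, so that if $Y$ were nonempty, the closure of an $H$-orbit in $Y$ would contain a closed $H$-orbit; and the closed $H$-orbits on $\B$ are precisely the Borels containing $\fh$ (\cite{CG}, Lemma 3.1.10). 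This is the step your proposal lacks.

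The step that fails is your item (iii), landing in $\fg_\Theta$. In fact $\fb\cap\fg_\Theta = \emptyset$ for every Borel $\fb$ containing $\fh$. Indeed, by Theorem \ref{thm:areborels} together with Lemma \ref{l:closed}, each $\fb_i := \pi_i(\fb)$ is a Borel of $\fg_i$ containing the diagonal Cartan $\fh_i$; so for $x\in\fb$ with diagonal entries $h_1,\dots,h_n$, the truncation $x_i$ lies in a Borel of $\fg_i$ containing $\fh_i$ and is upper triangular in a suitable basis with diagonal entries $h_1,\dots,h_i$ (up to permutation). Thus $\sigma_i(x_i)=\{h_1,\dots,h_i\}$, and $\sigma_i(x_i)\subset\sigma_{i+1}(x_{i+1})$ for every $i$, which contradicts the defining condition of $\fg_\Theta$. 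The underlying confusion is treating the Hessenberg cross-section $e+\fb_+$ of Theorem \ref{thm:Hess} as if it sat inside $\fb_+$; it is a parallel affine coset, and the unconstrained Ritz values that Theorem \ref{thm:Hess} expresses are precisely what fails for elements confined to a Borel containing $\fh$. Consequently, the transversality condition $\fz_{\fg_i}(x_i)\cap\fz_{\fg_{i+1}}(x_{i+1}) = 0$ of Proposition \ref{p:sreg}(3) must be verified directly rather than deduced from avoiding eigenvalue overlaps; in the paper this is done using properties of closed $K_i$-orbits (\cite{CEKorbs}, Proposition 5.2).
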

We briefly outline the proof of Theorem \ref{thm:sregandborels}.  For complete details see \cite{CEKorbs}, Section 5.  For ease of notation, we denote the flag variety $\B_{n}$ of $\fgl(n,\C)$ by $\B$.  Let $\fh\subset\fg$ denote the standard Cartan subalgebra of diagonal matrices and let $H$ be the corresponding Cartan subgroup.  Define 
$$
\B_{sreg}=\{\fb\in \B:\; \fb\cap \fg_{sreg}\neq \emptyset\}.  
$$
We want to show that $\B_{sreg}=\B$.  Consider the variety $Y=\B\setminus \B_{sreg}$.  We show that $Y$ is closed and $H$-invariant.  Let $\fb\in Y$ and consider its $H$-orbit, $H\cdot \fb$.  Since $Y$ is closed $\overline{H\cdot\fb}\subset Y$.  We know that $\overline{H\cdot \fb}$ contains a closed $H$-orbit.  But the closed $H$-orbits on $\B$ are precisely the Borels subalgebras $\fb$ which contain the Cartan subalgebra $\fh$ (\cite{CG}, Lemma 3.1.10).  Thus, it suffices to show that no Borel subalgebra $\fb$ with $\fh\subset \fb$ can be contained in $Y$.  This can be shown using the characterization of such Borels as $\borel$, with $\mathcal{Q}=(Q_{2},\dots, Q_{n})$ a sequence of closed $K_{i}$-orbits (see Theorem \ref{thm:areborels}) and properties of closed $K_{i}$-orbits (see \cite{CEKorbs}, Proposition 5.2).

\section{The geometry of $K$-orbits on the flag variety}\label{s:Kgeom}

Proofs of the results discussed in Section \ref{s:Korbs} require an
understanding of aspects of the geometry and parametrization of
$K_n$-orbits on the flag variety $\mathcal{B}_{n}$ of $\fgl(n,\C)$.
In this section, we develop the general theory of orbits of a symmetric subgroup $K$ of an algebraic group $G$ acting on the flag variety $\B$ of $G$.  We obtain representatives for the $K$-orbits on $\B$ and compute the involution $\theta^{\prime}$ mentioned in Remark \ref{rem:bigproof} for any $K$-orbit.  Along the way, we apply the general theory to the specific example of $G=GL(n,\C)$ and $K=GL(n-1,\C)\times GL(1,\C)$, providing the details behind the computations of \cite{CEKorbs}, Section 3.1.  
See the papers \cite{RS},\cite{RSexp}, and \cite{Vg} for results concerning
 orbits of a general symmetric subgroup on the flag variety.




\subsection{Parameterization of $K$-orbits on $G/B$}\label{s:para}
Let $G$ be reductive group over $\C$ such that $[G,G]$ is simply connected.  Let $\theta:G\to G$ be a holomorphic involution, and we also refer to the
differential of $\theta$ as $\theta:\fg \to \fg$.
Since $\theta:\fg\to\fg$ is a Lie algebra homomorphism, it preserves $[\fg, \fg]$ and the Killing form $<\cdot, \cdot>$ of $\fg$.  Let $K=G^{\theta}$ and assume that the fixed set $(Z(G)^0)^{\theta}$ is connected, where
$Z(G)^0$ is the identity connected component of the center of $G$. 
 Then by a theorem of Steinberg (\cite{St}, Corollary 9.7), $K$ is connected.

Let $\B$ be the flag variety of $\fg$, and recall that if $B$ is a Borel 
subgroup of $G$, the morphism $G/B \to \B$, $gB\mapsto \Ad(g)\fb$, where $\fb=Lie(B)$, is
a $G$-equivariant isomorphism $G/B\cong \B$.
The involution $\theta$ acts on the variety $\T$ of Cartan subalgebras
of $\fg$ by $\ft \mapsto \theta(\ft)$ for $\ft\in\mathcal{T}$, and the fixed set $\T^{\theta}$
is the variety of $\theta$-stable Cartan subalgebras.  We consider the variety 
$$
\mathcal{C}=\{(\fb,\ft)\in\B\times\T: \ft\subset \fb\}.
$$
Then $G$ acts on $\mathcal{C}$ through the adjoint action,
 and the subvariety $\mathcal{C}_{\theta}=\mathcal{C}\cap (\B\times \T^{\theta})$ is $K$-stable.  Consider the $G$-equivariant map $\pi:\mathcal{C}\to\B$ given by projection onto the first coordinate, $\pi(\fb, \ft)=\fb$.  It induces
a map 
\begin{equation}\label{eq:gamma}
\gamma: K\backslash \mathcal{C}_{\theta}\to K\backslash\B, \ \gamma(K\cdot(\fb, \ft)) = K\cdot \fb
\end{equation}
from the set of $K$-orbits on $\mathcal{C}_{\theta}$ to the set of $K$-orbits on $\B$. 

\begin{prop}\label{prop:gamma}
The map $\gamma$ is a bijection.  
\end{prop}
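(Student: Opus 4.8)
The goal is to show that $\gamma\colon K\backslash\mathcal{C}_\theta \to K\backslash\mathcal{B}$, $K\cdot(\fb,\ft)\mapsto K\cdot\fb$, is a bijection. I would prove surjectivity and injectivity separately, and the heart of both parts is the following classical fact about involutions acting on Cartan subalgebras of a solvable Lie algebra: \emph{if $\fb$ is a Borel subalgebra and $\sigma$ is an involution of $\fb$, then $\fb$ contains a $\sigma$-stable Cartan subalgebra, and any two such are conjugate under the unipotent radical $\exp(\fn)$ of $\fb$, where $\fn=[\fb,\fb]$.} Granting this, the proof is short.

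\smallskip
\textbf{Surjectivity.} Let $K\cdot\fb$ be any $K$-orbit on $\mathcal{B}$. I want to produce a $\theta$-stable Cartan $\ft\subset\fb$; then $(\fb,\ft)\in\mathcal{C}_\theta$ and $\gamma(K\cdot(\fb,\ft))=K\cdot\fb$. The subtlety is that $\theta$ need not preserve $\fb$, so one cannot directly apply the fact above to the pair $(\fb,\theta)$. The standard device (going back to Springer, and used in this setting by Richardson--Springer) is to consider $\fb\cap\theta(\fb)$, which contains a $\theta$-stable Cartan subalgebra $\ft$ of $\fg$: indeed $\fb\cap\theta(\fb)$ contains a maximal torus of the connected solvable group $B\cap\theta(B)$, and an averaging/conjugacy argument inside this group produces a $\theta$-stable one. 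Since $\ft\subset\fb$ and $\theta(\ft)=\ft$, we get the desired preimage. Alternatively—and perhaps more cleanly for an expository paper—one first reduces to the case where $\fb$ is $\theta$-stable by replacing $\fb$ with a suitable $K$-translate (not always possible), so I would instead go through $\fb\cap\theta(\fb)$ directly.

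\smallskip
\textbf{Injectivity.} Suppose $\gamma(K\cdot(\fb,\ft))=\gamma(K\cdot(\fb',\ft'))$, i.e.\ $K\cdot\fb=K\cdot\fb'$. After replacing $(\fb',\ft')$ by a $K$-translate we may assume $\fb'=\fb$, so $\ft$ and $\ft'$ are both $\theta$-stable Cartan subalgebras contained in the \emph{same} Borel $\fb$. I then need to find $k\in K$ with $\Ad(k)(\fb,\ft)=(\fb,\ft')$, i.e.\ $k\in N_G(\fb)=B$ with $\Ad(k)\ft=\ft'$. By the conjugacy statement above there is $n\in N:=\exp(\fn)$ with $\Ad(n)\ft=\ft'$. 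The point is that $n$ can be chosen in $K$: writing $\sigma=\theta|_\fb$ (which makes sense because $\theta$ stabilizes $\ft$, hence... actually one must be careful here), one argues that the two $\theta$-stable Cartans are conjugate by an element of $N^\theta=N\cap K$. Concretely, $\Ad(n)\ft=\ft'$ and applying $\theta$ gives $\Ad(\theta(n))\ft=\ft'$ as well, so $n^{-1}\theta(n)\in N_G(\ft)\cap N = \{e\}$ (the normalizer of a Cartan in a Borel that lies in the unipotent radical is trivial); hence $\theta(n)=n$, i.e.\ $n\in K$. This gives the required $k=n$.

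\smallskip
\textbf{Main obstacle.} The technical crux is the $\exp(\fn)$-conjugacy of $\theta$-stable Cartan subalgebras inside a fixed Borel, together with the clean vanishing $n^{-1}\theta(n)=e$ that upgrades a unipotent conjugation to a conjugation by an element of $K$. One has to be slightly careful about what ``$\theta$-stable'' means for the two Cartans when $\fb$ itself is not $\theta$-stable—the correct statement involves fixing a $\theta$-stable Cartan first and working with the associated involution of $\fb$ it determines. I expect the write-up to handle this by first choosing one $\theta$-stable Cartan $\ft_0\subset\fb$ (surjectivity gives it), using it to transport $\theta$ to an honest involution of the abstract pair, and then invoking the rigidity of maximal tori in solvable groups under a semisimple automorphism. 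Everything else (closedness/openness issues, finiteness of orbits) is not needed for the bijectivity statement itself and can be cited.
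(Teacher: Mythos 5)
Your overall strategy (surjectivity via Steinberg's existence of a $\theta$-stable Cartan in any Borel; injectivity via unipotent conjugacy of $\theta$-stable Cartans inside a fixed Borel plus a $\theta$-equivariance argument) matches the route the paper takes, which cites \cite{RSexp}, Proposition 1.2.1 for exactly these two steps.

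However, your injectivity argument has a genuine gap, and it is precisely the point where you write ``actually one must be careful here'' and then press on anyway. You take the unique $n\in N=\exp(\fn)$, the unipotent radical of $B$, with $\Ad(n)\ft=\ft'$, and then assert $n^{-1}\theta(n)\in N_G(\ft)\cap N=\{e\}$. The membership $n^{-1}\theta(n)\in N_G(\ft)$ is correct, but the membership $n^{-1}\theta(n)\in N$ is not automatic: $\theta(n)$ lies in $\theta(N)$, the unipotent radical of $\theta(B)$, and when $\fb$ is not $\theta$-stable one has $\theta(N)\neq N$. Indeed $\theta|_\fb$ does not make sense as an involution of $\fb$ precisely because $\theta(\fb)\neq\fb$. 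Without knowing $\theta(n)\in N$, you cannot conclude $n^{-1}\theta(n)=e$; for instance, $n^{-1}\theta(n)$ need not even be unipotent. The correct repair, which is what \cite{RSexp} does, is to observe that both $\ft$ and $\ft'$ lie in $\fb\cap\theta(\fb)$, so $T$ and $T'$ are maximal tori of the $\theta$-stable connected solvable group $S:=B\cap\theta(B)$. Let $U$ be the unipotent radical of $S$; it is $\theta$-stable, and $T'=uTu^{-1}$ for a unique $u\in U$. Applying $\theta$ gives $T'=\theta(u)T\theta(u)^{-1}$ with $\theta(u)\in U$, so uniqueness forces $\theta(u)=u$, i.e.\ $u\in U^\theta\subset K\cap B$. (One can then note that your $n\in N$ coincides with this $u$, since $U\subset N$ and the conjugating element in $N$ is unique, but the $\theta$-stability that makes the argument close is a property of $U$, not of $N$.) The surjectivity half is fine: you correctly route through $\fb\cap\theta(\fb)$ rather than trying to make $\fb$ itself $\theta$-stable.
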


For a proof of this proposition, we refer the reader to \cite{RSexp}, Proposition 1.2.1.   We summarize the main ideas.  To show the map $\gamma$ is surjective, it suffices to show that every Borel subalgebra contains a $\theta$-stable Cartan.  This follows from \cite{St}, Theorem 7.5.  To show that the map is injective, it suffices to show that if $\ft, \ft^{\prime}$ are $\theta$-stable Cartan subalgebras of a Borel subalgebra $\fb$, then $\ft$ and $\ft^{\prime}$ are $K\cap B$-conjugate, which is verified in \cite{RSexp}.

Throughout the discussion, we will fix a $\theta$-stable Borel $\fb_{0}$ and $\theta$-stable Cartan $\ft_{0}\subset\fb_{0}$.  Such a pair exists by \cite{St}, Theorem 7.5, and is called a {\it standard pair}.  Let $N=N_{G}(T_{0})$ be the normalizer of $T_{0}$, where $T_{0}$ is the Cartan subgroup with Lie algebra $\ft_{0}$.   We consider the map $\zeta_{0}: G\to \mathcal{C}$ given by $\zeta_{0}(g)=(\Ad(g) \fb_{0}, \Ad(g)\ft_{0})$, which is clearly $G$-equivariant with
respect to the left translation action on $G$ and the adjoint action on
$\mathcal{C}$.  It is easy to see that $\zeta_{0}$ is constant on left $T_{0}$-cosets, and
induces an isomorphism of varieties  
\begin{equation}\label{eq:zeta}
\zeta: G/T_{0}\to \mathcal{C}.   
\end{equation}
To parameterize the $K$-orbits on $\B$ using Proposition \ref{prop:gamma}, we introduce the variety $\V=\zeta_{0}^{-1}(\mathcal{C}_{\theta})$.  It is easy to show that $\V$ is the set  
\begin{equation}\label{eq:varietyV}
\V=\{g\in G:\, g^{-1}\theta(g)\in N\}.
\end{equation}
 By Equation (\ref{eq:zeta}) and the $G$-equivariance of the map $\zeta_{0}$, 
it follows that the morphism $\zeta$ induces a bijection,  
 \begin{equation}\label{eq:doublecosets}
 \zeta:K\backslash\V/ T_{0}\to K\backslash \mathcal{C}_{\theta},
 \end{equation}
 which we also denote by $\zeta$. Combining Equation (\ref{eq:doublecosets}) 
with Proposition \ref{prop:gamma}, we obtain the following useful 
parametrization of $K$-orbits on $\B$ (cf. \cite{RSexp}, Proposition 1.2.2).  
 \begin{prop}\label{prop:bijections}
 There are natural bijections 
 $$
 K\backslash \V /T_{0}\leftrightarrow K\backslash \mathcal{C}_{\theta}\leftrightarrow K\backslash \B \leftrightarrow K\backslash G/ B_{0}.
 $$
 \end{prop}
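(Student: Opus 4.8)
The plan is to chain together the bijections that have already been established in the excerpt, adding only the last link $K\backslash \B \leftrightarrow K\backslash G/B_0$. The first bijection $K\backslash\V/T_0 \leftrightarrow K\backslash\mathcal{C}_\theta$ is exactly the map $\zeta$ of Equation~(\ref{eq:doublecosets}), whose bijectivity follows from the fact that $\zeta_0$ induces an isomorphism $G/T_0 \to \mathcal{C}$ (Equation~(\ref{eq:zeta})) together with the $G$-equivariance of $\zeta_0$: a $G$-equivariant isomorphism of $G$-varieties descends to a bijection on $K$-orbit sets, and $\V$ is by definition the preimage of $\mathcal{C}_\theta$, so $K$-orbits on $\V$ biject with $K$-orbits on $\mathcal{C}_\theta$, except that one must pass through the intermediate quotient by $T_0$ since $\zeta$ is defined on $G/T_0$ rather than $G$. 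The second bijection $K\backslash\mathcal{C}_\theta \leftrightarrow K\backslash\B$ is precisely the content of Proposition~\ref{prop:gamma}, where $\gamma$ was shown to be a bijection.

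The only genuinely new link is $K\backslash\B \leftrightarrow K\backslash G/B_0$. For this I would invoke the standard $G$-equivariant isomorphism $G/B_0 \xrightarrow{\sim} \B$, $gB_0 \mapsto \Ad(g)\fb_0$, which was recalled just before Equation~(\ref{eq:gamma}). Since this is an isomorphism of $G$-varieties, restricting the $G$-action to $K$ and passing to orbit spaces gives a bijection $K\backslash(G/B_0) \leftrightarrow K\backslash\B$; and $K$-orbits on $G/B_0$ acting by left translation are the same as $K$-backslash-$G$-slash-$B_0$ double cosets. So this link is formal. Composing the three bijections yields the chain of natural bijections
$$
K\backslash \V /T_{0}\leftrightarrow K\backslash \mathcal{C}_{\theta}\leftrightarrow K\backslash \B \leftrightarrow K\backslash G/ B_{0},
$$
which is the assertion of the proposition.

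I do not anticipate any real obstacle here: every arrow is either a restatement of a result already proved in the excerpt (Equation~(\ref{eq:doublecosets}), Proposition~\ref{prop:gamma}) or the elementary observation that a $G$-equivariant isomorphism of varieties induces a bijection on $K$-orbit sets for any subgroup $K \subset G$. The one point that deserves a sentence of care is why passing from $G$ to $G/T_0$ in the first link is harmless — namely that $T_0 \subset N$, so $T_0$-cosets inside $\V$ behave well under $\zeta_0$, which is exactly what Equation~(\ref{eq:zeta}) and the definition~(\ref{eq:varietyV}) of $\V$ encode. Thus the proof is essentially a bookkeeping argument assembling Propositions~\ref{prop:gamma} and the isomorphisms~(\ref{eq:zeta}) and~(\ref{eq:doublecosets}); if anything is subtle it is only making sure the identifications are the \emph{natural} ones, i.e.\ that the composite sends the double coset $K g T_0$ to the $K$-orbit of $\Ad(g)\fb_0$, which follows by unwinding the definitions of $\zeta_0$, $\pi$, and $\gamma$.
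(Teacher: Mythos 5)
Your proposal is correct and follows the paper's own line of reasoning exactly: the paper also obtains this proposition by combining Equation~(\ref{eq:doublecosets}) (the bijection $K\backslash\V/T_0 \leftrightarrow K\backslash\mathcal{C}_\theta$ induced by $\zeta$) with Proposition~\ref{prop:gamma}, and treats the last link $K\backslash\B \leftrightarrow K\backslash G/B_0$ as immediate from the $G$-equivariant isomorphism $G/B_0\cong\B$ recalled earlier. You spell out the final link a bit more explicitly than the paper does, but the argument and the decomposition into three bijections are the same.
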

 
 
  Let $V$ denote the set of $(K, T_{0})$-double cosets in $\V$.  By \cite{Sp}, Corollary
 4.3, $V$ is a finite set and hence:
  $$
  \mbox{\textbf {The number of $K$-orbits on $\B$ is finite. }}
  $$
  
  \begin{nota}
  For $v\in V$, let $\hv \in \V$ denote a representative, so that
 $v=K\hv T$.  Denote the corresponding  $K$-orbit in $\B$ by $K\cdot \fb_{\hv}$, where $\fb_{\hv}=\Ad(\hv)\cdot\fb_{0}$.
  \end{nota}
  
  We end this section with a  discussion of how $\theta$ acts on the root decomposition of $\fg$ with respect to a $\theta$-stable Cartan subalgebra $\ft$.  

  \begin{dfn}\label{defn:roots}
For $(\fb, \ft) \in \mathcal{C}_{\theta}$ and $\alpha \in \Phi = \Phi (\fg, \ft)$,
let $e_\alpha \in \fg_{\alpha}$ be a root vector in the corresponding root space.
We say that $\alpha$ is positive for $(\fb, \ft)$ if $\fg_{\alpha}
\subset \fb$.
We define the {\it type} of $\alpha$ for the pair $(\fb, \ft)$ with respect to $\theta$ as follows.  
  \begin{enumerate}
  \item If $\theta(\alpha)=-\alpha$, then $\alpha$ is said to be \emph{real}.
  \item If $\theta(\alpha)=\alpha$, then $\alpha$ is said to be \emph{imaginary}.  In this case, there are two subcases:
  \begin{enumerate}
  \item If $\theta(e_{\alpha})=e_{\alpha}$, then $\alpha$ is said to be \emph{compact imaginary}.
  \item If $\theta(e_{\alpha})=-e_{\alpha}$, then $\alpha$ is said to be \emph{non-compact imaginary}.
  \end{enumerate}
  \item If $\theta(\alpha)\neq \pm \alpha$, then $\alpha$ is said to \emph{complex}.  If 
also $\alpha$ and $\theta(\alpha)$ are both positive, we say $\alpha$ is  complex $\theta$-stable.
  \end{enumerate}

  \end{dfn}
  
\begin{rem}\label{rem:thetastableroots}
Let $\alpha$ be a positive root.  Then
$\theta(\alpha)$ is positive if and only if $\alpha$ is imaginary
or complex $\theta$-stable.
\end{rem}

For $v\in V$ with representative $\hv \in \V$, we define a new involution
by the formula,
\begin{equation}\label{eq:newinv}
\theta_{\hv}=\Ad(\hv^{-1})\circ\theta\circ \Ad(\hv)=\Ad(\hv^{-1}\theta(\hv))\circ\theta.
\end{equation}
Note that $\theta_{\hv}(\ft_0)=\ft_0$, and consider the induced action
of $\theta_{\hv}$ on $\Phi(\fg, \ft_0)$.

\begin{dfn}\label{defn:rootsforv}  Let $\alpha \in \Phi(\fg, \ft_0),\, v\in V$, and $\hv\in\V$ be a representative for $v$.  
We define \emph{the type of the root} $\alpha$ \emph{for} $v$ to be the type of the root $\alpha$ for the pair $(\fb_{0},\ft_{0})$ with 
respect to the involution $\theta_{\hv}$.

\end{dfn}

For example, a root $\alpha$ is imaginary for $v$ if and only if $\theta_{\hv}(\alpha)=\alpha$.   Note that if $k\hv t$ is a different representative for $v$, then
$\theta_{k \hv t} = \Ad(t^{-1}) \circ \theta_{\hv} \circ \Ad(t)$.  It follows
easily that the type of $\alpha$ for $v$ does not depend on the choice of a representative
$\hv$.  Further, the involution $\theta_{\hv}$ of $\Phi(\fg, \ft_0)$ does not depend
on the choice of $\hv$, and we refer to $\theta_{\hv}$ as the involution of 
associated to the orbit $v$.

 For $v\in V$ and $\fb_{\hv}=\Ad(\hv)\cdot \fb_{0}$, consider the $\theta$-stable 
Cartan subalgebra $\ft^{\prime} = \Ad(\hv)\cdot \ft_{0} \subset \fb_{\hv}$.
For $\alpha\in\Phi(\fg,\ft_{0})$, we define  
$\Ad(\hv)\alpha := \alpha\circ\Ad(\hv^{-1})\in\Phi(\fg,\ft^{\prime}).$  

\begin{prop}\label{prop:rootsforv}
For $\alpha \in \Phi(\fg, \ft_0)$, the type of $\alpha$ for $v$ is the
same as the type of $\Ad(\hv)\alpha$ for the pair $(\fb_{\hv}, \ft^{\prime})$ with respect
to $\theta$.
\end{prop}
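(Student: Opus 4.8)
The plan is to unwind the definitions on both sides and check that the conjugation by $\hv$ intertwines the two involutions in the way needed to match the root types. Concretely, the type of $\alpha$ for $v$ is by Definition \ref{defn:rootsforv} the type of $\alpha$ for the pair $(\fb_0,\ft_0)$ with respect to $\theta_{\hv}=\Ad(\hv^{-1})\circ\theta\circ\Ad(\hv)$, while the right-hand side is the type of $\Ad(\hv)\alpha\in\Phi(\fg,\ft')$ for the pair $(\fb_{\hv},\ft')$ with respect to $\theta$ itself, where $\ft'=\Ad(\hv)\ft_0$ and $\fb_{\hv}=\Ad(\hv)\fb_0$. So the whole statement is really the assertion that $\Ad(\hv)$ carries the data $(\fb_0,\ft_0,\theta_{\hv})$ to the data $(\fb_{\hv},\ft',\theta)$ compatibly.

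First I would record the elementary transport-of-structure facts. If $e_\alpha\in\fg_\alpha$ is a root vector for $\ft_0$, then $\Ad(\hv)e_\alpha$ is a root vector in $\fg_{\Ad(\hv)\alpha}$ for $\ft'$; this is just the computation that for $t'=\Ad(\hv)t\in\ft'$ we have $[t',\Ad(\hv)e_\alpha]=\Ad(\hv)[t,e_\alpha]=(\Ad(\hv)\alpha)(t')\,\Ad(\hv)e_\alpha$, using the definition $\Ad(\hv)\alpha=\alpha\circ\Ad(\hv^{-1})$. Next, positivity is preserved: $\fg_\alpha\subset\fb_0$ if and only if $\fg_{\Ad(\hv)\alpha}=\Ad(\hv)\fg_\alpha\subset\Ad(\hv)\fb_0=\fb_{\hv}$. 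So $\alpha$ is positive for $(\fb_0,\ft_0)$ exactly when $\Ad(\hv)\alpha$ is positive for $(\fb_{\hv},\ft')$.

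The core point is the comparison of the two $\theta$-actions on roots and root vectors. From $\theta_{\hv}=\Ad(\hv^{-1})\circ\theta\circ\Ad(\hv)$ we get $\theta\circ\Ad(\hv)=\Ad(\hv)\circ\theta_{\hv}$ as maps $\fg\to\fg$, hence on the Cartan, $\theta(t')=\Ad(\hv)(\theta_{\hv}(t))$ for $t'=\Ad(\hv)t$, so that the induced action of $\theta$ on $\Phi(\fg,\ft')$ corresponds under $\alpha\mapsto\Ad(\hv)\alpha$ to the action of $\theta_{\hv}$ on $\Phi(\fg,\ft_0)$: precisely, $\theta(\Ad(\hv)\alpha)=\Ad(\hv)(\theta_{\hv}(\alpha))$. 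This immediately shows $\theta(\Ad(\hv)\alpha)=\pm\Ad(\hv)\alpha$ iff $\theta_{\hv}(\alpha)=\pm\alpha$, so the real/imaginary/complex trichotomy matches, and in the complex case, since positivity of both $\Ad(\hv)\alpha$ and $\theta(\Ad(\hv)\alpha)=\Ad(\hv)(\theta_{\hv}\alpha)$ matches positivity of $\alpha$ and $\theta_{\hv}\alpha$, the complex $\theta$-stable subcase matches as well. Finally, for an imaginary root one must distinguish compact from noncompact: apply $\theta$ to $\Ad(\hv)e_\alpha$, getting $\theta(\Ad(\hv)e_\alpha)=\Ad(\hv)(\theta_{\hv}(e_\alpha))=\pm\Ad(\hv)e_\alpha$ according to whether $\theta_{\hv}(e_\alpha)=\pm e_\alpha$; since $\Ad(\hv)e_\alpha$ is precisely the chosen root vector in $\fg_{\Ad(\hv)\alpha}$, this says $\Ad(\hv)\alpha$ is compact (resp. noncompact) imaginary for $\theta$ iff $\alpha$ is compact (resp. noncompact) imaginary for $\theta_{\hv}$.

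I don't expect a serious obstacle here; the proposition is essentially a formal consequence of the identity $\theta\circ\Ad(\hv)=\Ad(\hv)\circ\theta_{\hv}$ together with the observation that $\Ad(\hv)$ is a Lie algebra automorphism carrying $(\fb_0,\ft_0)$ to $(\fb_{\hv},\ft')$. The one place requiring a little care is the bookkeeping in the compact/noncompact imaginary case, to make sure the root vector transported by $\Ad(\hv)$ is literally the one used in Definition \ref{defn:roots} to test the sign of $\theta$; but since the defining property of "type" only depends on the sign $\theta(e_\alpha)/e_\alpha$, which is intrinsic to the root space and independent of which nonzero root vector is chosen, this is automatic. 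I would present the argument by treating the three cases of Definition \ref{defn:roots} in turn, each reduced to the single displayed identity relating $\theta$ on $\Phi(\fg,\ft')$ and $\theta_{\hv}$ on $\Phi(\fg,\ft_0)$.
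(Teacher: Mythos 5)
Your proof is correct and takes the same route as the paper: the paper's entire proof is the one-line remark that the result ``follows easily from the identity $\theta \circ \Ad(\hv) = \Ad(\hv)\circ\theta_{\hv}$,'' and your argument is just a careful unwinding of that identity across the three cases of Definition \ref{defn:roots}. There is no gap; you have simply supplied the routine verifications the paper leaves to the reader.
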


\begin{proof} 
This follows easily from the identity $\theta \circ \Ad(\hv)
= \Ad(\hv)\circ\theta_{\hv}.$
\end{proof}

By Proposition \ref{prop:rootsforv}, we may compute the action of
$\theta$ on the positive roots in $\Phi(\fg, \ft^{\prime})$ for the pair $(\fb_{\hv}, \ft^{\prime})$ using
the involution $\theta_{\hv}$ on our standard positive system $\Phi^{+}(\fg,\ft_{0})$
in $\Phi(\fg, \ft_0)$.






\begin{rem}
We also denote the corresponding involution on $G$ by $\theta_{\hv}$.  By abuse of notation, we denote conjugation on $G$ by $\Ad$, i.e., for $g,\, h\in G;\; \Ad(g)h=ghg^{-1}$.  Thus $\theta_{\hv}:G\to G$ is also given by the formula in Equation (\ref{eq:newinv}).  Its differential at the identity is $\theta_{\hv}:\fg\to\fg$.  
\end{rem}

  \subsection{The $W$-action on $V$}\label{s:Weyl}
 The fact that $K$-orbits on the flag variety have representatives coming
from $\V$ was used by Springer \cite{Sp} to associate a Weyl group element
$\phi(v)$ to the $K$-orbit indexed by $v\in V$. The element $\phi(v)$
plays a crucial role in understanding the action of the involution $\theta_{\hv}$ associated to $v$ on the roots for the standard pair $\Phi(\fg,\ft_{0})$.  


We first consider the map $\tau:G \to G$ given
by $\tau(g)=g^{-1}\theta(g).$ Note that $\tau^{-1}(N)=\V$. Then following \cite{Sp}, Section 4.5,
we define for $v=K\hv T_{0}$
\begin{equation}\label{eq:mapphi}
\phi(v)=\tau(\hv)T_0 \in N/T_0 = W.
\end{equation}
We refer to the map $\phi$ as the Springer map and $\phi(v)$ as the Springer invariant of $v\in V$.  It is easy to check that $\phi(v)$ is independent of the choice of representative $\hv$.

  


The Springer map is not injective, but we can study its fibers using an action of $W$ on $V$, which we now describe.  The group $N$ acts on $\V$ on the left by $n\cdot \hat{v}=\hat{v} n^{-1}$ for $\hat{v}\in\V$ and $n\in N$.  This action induces a $W$-action on $V$ given by
 \begin{equation}\label{eq:crossaction}
 w\times v:= K\hv \dot{w}^{-1}T_{0},
 \end{equation}
 where $\hv\in\V$ is a representative of $v\in V$ and $\dot{w}\in N$ is a representative of $w\in W$.  It is easy to check that the formula in Equation (\ref{eq:crossaction}) does not depend on the choice of representatives $\dot{w}$ or $\hv$.  We refer to this action as the \emph{cross action} of $W$ on $V$.  The Springer map intertwines the cross action of $W$ on $V$ with a certain twisted action of $W$ on itself.  We note that since $T_{0}$ is $\theta$-stable, $\theta$ acts on $N$ and hence on $W$.  We define the twisted conjugation action 
of $W$ on itself by:
  \begin{equation}\label{eq:twistedact}
  w^{\prime}*w=w^{\prime} w\theta((w^{\prime})^{-1}), \mbox{ for }  w, w^{\prime} \in W.
  \end{equation}

 
  
\begin{prop}\label{prop:actions}
\begin{enumerate}
\item The Springer map $\phi:V\to W$ is $W$-equivariant with respect the cross action on $V$ and the twisted $W$-action on $W$. 
\item (\cite{RS}, Proposition 2.5) Suppose for $v,\, v^{\prime} \in V$, we have $\phi(v)=\phi(v^{\prime})$.  Then $v^{\prime}=w\times v$ for some $w\in W$.  
\end{enumerate}
\end{prop}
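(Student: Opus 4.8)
The plan is to prove both statements by translating cross-action statements about $V = K\backslash \V/T_0$ into twisted-conjugation statements about $N$, using the defining formulas $\tau(g) = g^{-1}\theta(g)$ and $\phi(v) = \tau(\hv)T_0$. For part (1), I would take a representative $\hv \in \V$ of $v$ and a representative $\dw \in N$ of $w$, so that $w\times v = K\hv\dw^{-1}T_0$ by Equation (\ref{eq:crossaction}). Then I compute directly
\begin{equation*}
\tau(\hv\dw^{-1}) = \dw\hv^{-1}\theta(\hv)\theta(\dw^{-1}) = \dw\,\tau(\hv)\,\theta(\dw)^{-1}.
\end{equation*}
Passing to cosets mod $T_0$ (which is legitimate since $T_0$ is $\theta$-stable and normal in $N$, so the right-hand side is well defined mod $T_0$ and independent of the choices $\dw$, $\hv$), this reads $\phi(w\times v) = w\,\phi(v)\,\theta(w)^{-1} = w * \phi(v)$ in the notation of Equation (\ref{eq:twistedact}). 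That is exactly the asserted $W$-equivariance, so part (1) is essentially the one-line computation above together with the routine check that everything descends to $W$ and $V$.

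For part (2), suppose $\phi(v) = \phi(v')$. Choose representatives $\hv, \hat{v}' \in \V$; then $\tau(\hv)$ and $\tau(\hat{v}')$ lie in the same left $T_0$-coset of $N$, i.e. $\tau(\hat{v}') = t\,\tau(\hv)$ for some $t \in T_0$. The goal is to produce $w \in W$ with $v' = w\times v$, equivalently to show $K\hat{v}'T_0 = K\hv\dw^{-1}T_0$ for some representative $\dw$ of $w$; since we have freedom to modify $\hv$ within its $K$-coset and its $T_0$-coset, and the cross action only sees $\hv$ modulo those, the natural strategy is: find $g \in K$ such that $g\hat{v}'$ and $\hv$ differ on the right by an element of $N$. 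Equivalently, one wants $(g\hat{v}')^{-1}\hv \in N$, i.e. $\hat{v}'^{-1}g^{-1}\hv \in N$. The cleanest route is to exploit that $\tau$ is "$K$-invariant on the left": $\tau(g\hv) = \hv^{-1}g^{-1}\theta(g)\theta(\hv) = \hv^{-1}\theta(\hv) = \tau(\hv)$ for $g \in K$, so $\tau$ factors through $K\backslash G$, and $\V/{\sim}$ where $\hv \sim g\hv$ maps to $N$ via $\tau$; the condition $\tau(\hat{v}') \in T_0\tau(\hv)$ should then, after absorbing the $T_0$ on the left into a change of representative $\hv \mapsto \hv'$ with $\tau(\hv') = \tau(\hat{v}')$ exactly, force $\hat{v}'$ and $\hv'$ into the same $(K,T_0)$-orbit up to a right $N$-translate. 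I expect to cite \cite{RS}, Proposition 2.5 for this direction rather than reprove it, but the above indicates the mechanism.

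The main obstacle is part (2): part (1) is a formal identity, but part (2) requires showing that a coincidence of Springer invariants is \emph{accounted for} by the cross action, which is a genuine surjectivity/rigidity statement about the fibers of $\tau: \V \to N$ modulo $(K, T_0)$ on the source and conjugation on the target. The delicate point is passing from "$\tau(\hat{v}')$ and $\tau(\hv)$ agree in $N/T_0$" back to an honest relation between the double cosets $K\hat{v}'T_0$ and $K\hv T_0$: one must control the extra $T_0$-ambiguity and show it can be realized by a right translation by a Weyl group representative, which uses that $T_0$-translation on the right of $\hv$ changes $v$ by the cross action of the trivial... more precisely that $\hv \mapsto \hv\dw^{-1}$ realizes all of $w\times v$ as $\dw$ ranges over lifts. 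Since this is precisely the content of \cite{RS}, Proposition 2.5, I would state it as cited and only sketch that its proof rests on the computation in part (1) run in reverse together with the structure of $\V$ from Equation (\ref{eq:varietyV}).
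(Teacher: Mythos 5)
Your proposal matches the paper's own treatment: part (1) is verified by exactly the direct computation $\tau(\hv\dot{w}^{-1}) = \dot{w}\,\tau(\hv)\,\theta(\dot{w})^{-1}$, which descends to the twisted-conjugation identity on $W$, and part (2) is deferred to \cite{RS}, Proposition 2.5. The paper says no more than ``Part (1) is an easy calculation using the definition of $\phi$; Part (2) is non-trivial and relies on many of the results of \cite{RS}, Section 2,'' so your write-up actually supplies more detail on part (1) than the paper does, and is consistent with it throughout.
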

Part (1) is an easy calculation using the definition of $\phi$.  Part (2) is non-trivial and relies on many of the results of \cite{RS}, Section 2.



\subsection{Closed $K$-orbits on $\B$}\label{s:closed}

In this section, we use the properties of the Springer map developed in the previous section to find representatives for the closed $K$-orbits on $\B$ and describe the involution $\theta_{\hv}$ associated to such orbits.  

Since $\theta$ acts on $W$, we can consider the $W$-fixed point subgroup,
 $W^{\theta}$.  By \cite{Rich}, Lemma 5.1, $T_0 \cap K$ is a maximal torus
of $K$, and by \cite{Rich}, Lemma 5.3, the subgroup $N_K(T_{0} \cap K)
\subset N_G(T_{0})$.  It follows that the group homomorphism
$N_K(T_{0} \cap K)/(T_0 \cap K) \to N_G(T_{0})/T_{0}$ is
injective. Hence, we may regard $W_K$ as a subgroup of $W$, and it is
easy to see that it has image in $W^{\theta}$.

\begin{thm}\label{thm:closed}
There is a one-to-one correspondence between the set of closed $K$-orbits on $\B$ and the coset space $W^{\theta}/ W_{K}$.  The correspondence is given by: 
\begin{equation}\label{eq:correspond}
w\, W_{K}\to K \dot{w}^{-1} T_{0},
\end{equation}
for $\dot{w}\in N$ a representative of $w\in W^{\theta}$. 
\end{thm}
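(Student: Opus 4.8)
The plan is to use the Springer map $\phi: V \to W$ and its equivariance properties (Proposition \ref{prop:actions}) together with the characterization of closed orbits as those indexed by $v \in V$ with $\phi(v) \in W^\theta$. First I would establish that a $K$-orbit $K \cdot \fb_{\hv}$ is closed if and only if every root $\alpha \in \Phi(\fg, \ft_0)$ is either imaginary or complex $\theta_{\hv}$-stable for $v$ — equivalently, $\theta_{\hv}$ preserves the standard positive system $\Phi^+(\fg, \ft_0)$ (by Remark \ref{rem:thetastableroots}, $\theta_{\hv}(\alpha)$ is positive whenever $\alpha$ is positive). The intuition is that a closed orbit corresponds to the situation where $\theta_{\hv}$ fixes the Borel $\fb_0$, i.e. $\hv^{-1}\theta(\hv) \in B_0 \cap N = T_0$, which is exactly the statement $\phi(v) = \tau(\hv)T_0$ is the identity coset — but we must be careful: closedness does not force $\phi(v) = e$, only that $\phi(v)$ has a representative normalizing $B_0$, and since $\phi(v)$ already lies in $W = N/T_0$, closedness should be equivalent to $\phi(v) \in W^\theta$ via the twisted action fixing the base point. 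I would make this precise: the orbit $v$ is closed iff $\theta_{\hv}$ stabilizes $\fb_0$, and by Equation (\ref{eq:newinv}) this happens iff $\hv^{-1}\theta(\hv)$ normalizes $\fb_0$, hence lies in $B_0 \cap N$; writing $\phi(v) = \tau(\hv)T_0$, one checks $B_0 \cap N$ maps onto the stabilizer data making $\phi(v)$ well-behaved under twisted conjugation.

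Second I would show that $\phi$ restricted to the closed orbits lands in $W^\theta$ and is surjective onto it: given $w \in W^\theta$ with representative $\dot w \in N$, the element $\dot w^{-1}$ lies in $\V$ since $(\dot w^{-1})^{-1}\theta(\dot w^{-1}) = \dot w \theta(\dot w)^{-1} = \dot w \theta(\dot w^{-1})$, and because $w \in W^\theta$ this represents the class $w * e$... more carefully, $\tau(\dot w^{-1}) = \dot w \theta(\dot w^{-1})$, whose image in $W$ is $w\theta(w^{-1}) = w\theta(w)^{-1}$; since $w\in W^\theta$ this equals $ww^{-1} = e$ only if $\theta(w) = w$, which holds — wait, I need $\theta$ as an automorphism, so $\theta(w^{-1}) = \theta(w)^{-1} = w^{-1}$, giving $\tau(\dot w^{-1})T_0 = w w^{-1} = e$?? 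That would say $\phi$ of this orbit is trivial. I should instead track that the relevant representative is chosen so that $\phi(K\dot w^{-1} T_0)$ records $w$ up to the twisted action; the cleanest route is: the cross action of $W$ on $V$ (Equation (\ref{eq:crossaction})) is transitive on the fiber $\phi^{-1}(\text{given coset})$ by Proposition \ref{prop:actions}(2), and the closed orbits form a single cross-action orbit, namely the cross-action orbit of the base point $v_0 = K T_0$ (corresponding to $\fb_0$ itself, which generates a closed orbit since $\fb_0$ is $\theta$-stable). Then $w \times v_0 = K \dot w^{-1} T_0$, and this is closed iff ... the stabilizer of $v_0$ under the cross action is exactly $W_K$ — this is the content I would prove using \cite{Rich}, Lemmas 5.1 and 5.3, identifying $W_K \subset W^\theta$ — together with the fact that $w \times v_0$ is closed iff $w \in W^\theta$.

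Third, combining these: the map $w \mapsto K\dot w^{-1} T_0$ descends to a well-defined map $W^\theta \to \{\text{closed } K\text{-orbits}\}$, it is surjective because all closed orbits lie in the single cross-action orbit of $v_0$ intersected with the $W^\theta$-translates, and its fibers are precisely the cosets of $W_K$ because $w \times v_0 = w' \times v_0$ iff $w^{-1}w'$ stabilizes $v_0$ iff $w^{-1}w' \in W_K$. I expect the main obstacle to be the two-directional claim that \emph{$w \times v_0$ is closed $\iff w \in W^\theta$}: the forward direction requires knowing that a closed orbit's Springer invariant is $\theta$-fixed, and the reverse requires producing, for each $w \in W^\theta$, an actual representative $\hv \in \V$ with $\theta_{\hv}$ stabilizing $\fb_0$, which forces a careful choice of $\dot w$ (one normalizing $\ft_0$ and compatible with $\theta$) and a verification that $\dot w^{-1}\theta(\dot w^{-1})^{-1}$... i.e. $\dot w^{-1}\theta(\dot w) \in N_G(\fb_0) = B_0$, hence in $T_0$ after adjusting by $T_0$. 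This hinges on choosing the representatives $\dot w$ consistently; I would handle it by picking Tits-style representatives and using that $\theta$ permutes them compatibly because $\theta$ preserves $\fb_0$ and $\ft_0$. The finiteness and bijectivity bookkeeping then follows formally from Proposition \ref{prop:bijections} and Proposition \ref{prop:actions}.
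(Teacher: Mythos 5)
Your proposal is essentially correct and converges on the same argument the paper uses: identify $V_0$ with the cross-action orbit $W^\theta\times v_0$, show the stabilizer of $v_0$ in $W$ is $W_K$, and conclude. You do work through an embedded false start — the suggestion that closedness ``does not force $\phi(v)=e$'' but only $\phi(v)\in W^\theta$ — before your own calculation $\tau(\dot w^{-1})T_0 = w\theta(w)^{-1}=e$ (for $w\in W^\theta$) leads you to the correct characterization $\phi(v)=1$ for closed orbits, which is exactly Proposition~\ref{prop:closed}. One small misattribution: the stabilizer statement ${\rm Stab}_W(v_0)=W_K$ is cited in the paper to \cite{RS}, Proposition~2.8, while the Richardson Lemmas~5.1 and~5.3 are invoked only to show $W_K$ embeds in $W^\theta$ via $N_K(T_0\cap K)\subset N_G(T_0)$; your text conflates the two roles, although the underlying content you need is present.
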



To prove Theorem \ref{thm:closed}, we describe equivalent conditions for a $K$-orbit on $\B$ to be closed.  We begin with the following lemma (see \cite{BH}, Lemma 3).

\begin{lem}\label{l:2closed}
Let $B\subset G$ be a Borel subgroup.  Then the following statements are equivalent.
\begin{enumerate}
\item The Borel subgroup $B$ is $\theta$-stable.
\item The subgroup $(B\cap K)^{0}$ is a Borel subgroup of $K$, where $(B\cap K)^{0}$ denotes the identity component of $B\cap K$.
\end{enumerate}
\end{lem}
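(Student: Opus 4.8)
The plan is to deduce both implications from a single dimension count for $\dim(\fb\cap\fk)=\dim\fb^{\theta}$, carried out via the action of $\theta$ on the roots of $\fg$ relative to a $\theta$-stable maximal torus contained in $\fb$. Note first that $B\cap K$ is a closed subgroup of the connected solvable group $B$, so $(B\cap K)^{0}$ is automatically connected and solvable; hence it is a Borel subgroup of $K$ if and only if it contains a maximal torus of $K$ and has dimension $\tfrac{1}{2}(\dim\fk+\operatorname{rk}\fk)$. Given any $\theta$-stable maximal torus $T\subset B$ with Lie algebra $\ft$, let $a,r,s,t$ denote the numbers of compact imaginary, real, complex $\theta$-stable, and complex non-$\theta$-stable positive roots for the pair $(\fb,\ft)$ in the sense of Definition \ref{defn:roots}. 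A routine eigenspace computation --- $\theta$ fixes $\fg_{\alpha}$ for compact imaginary $\alpha$, negates it for noncompact imaginary $\alpha$, swaps $\fg_{\alpha}$ with $\fg_{\theta\alpha}$ for complex $\alpha$, and swaps $\fg_{\alpha}$ with $\fg_{-\alpha}$ for real $\alpha$ --- yields $\dim\fb^{\theta}=\dim\ft^{\theta}+a+\tfrac{1}{2}s$ and $\dim\fk=\dim\ft^{\theta}+2a+s+t+r$.

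For $(1)\Rightarrow(2)$: if $B$ is $\theta$-stable, then by \cite{St}, Theorem 7.5 it contains a $\theta$-stable maximal torus $T$, and (cf.\ \cite{Rich}, Lemma 5.1, as recalled before the lemma) $(T^{\theta})^{0}=(T\cap K)^{0}$ is a maximal torus of $K$. Writing $B=T\ltimes U$ with $U=R_{u}(B)$, all factors are $\theta$-stable, $U^{\theta}$ is connected (the fixed-point group of a holomorphic involution of a unipotent group in characteristic zero is connected), and $B\cap K=T^{\theta}\ltimes U^{\theta}$, so $(B\cap K)^{0}$ contains the maximal torus $(T^{\theta})^{0}$ of $K$. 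Since $\theta$-stability of $\fb$ forces $\theta(\Phi^{+})=\Phi^{+}$, hence $r=t=0$ (cf.\ Remark \ref{rem:thetastableroots}), the formulas above give $\dim\fb^{\theta}=\dim\ft^{\theta}+a+\tfrac{1}{2}s=\tfrac{1}{2}(\dim\fk+\operatorname{rk}\fk)$, and therefore $(B\cap K)^{0}$ is a Borel subgroup of $K$.

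For the converse $(2)\Rightarrow(1)$ the difficulty is that we do not yet know $B$ contains a $\theta$-stable torus adapted to the problem, and the plan is to manufacture one. Choose a maximal torus $T_{K}$ of $K$ inside $(B\cap K)^{0}$ and set $L=Z_{G}(T_{K})$, a connected reductive $\theta$-stable subgroup of $G$ (it is $\theta$-stable because $T_{K}\subset K$ is pointwise $\theta$-fixed). As $T_{K}$ is a maximal torus of the connected group $K$, it is self-centralizing there, so $L\cap K=Z_{K}(T_{K})=T_{K}$, i.e.\ $L^{\theta}=T_{K}\subseteq Z(L)^{0}$. Then $[L,L]^{\theta}\subseteq T_{K}\cap[L,L]$ is finite; but an involution of a nontrivial connected semisimple group has fixed-point subgroup of positive dimension (otherwise it acts as $-\operatorname{id}$ on the Lie algebra, which is not a Lie algebra automorphism), so $[L,L]$ is trivial, $L=T$ is a $\theta$-stable maximal torus of $G$, and $T_{K}\subseteq T\subseteq B$. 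Moreover $\ft^{\theta}$ is a toral subalgebra of $\fk$ containing the Cartan subalgebra $\ft_{K}=\operatorname{Lie}(T_{K})$, so $\ft^{\theta}=\ft_{K}$ and $\operatorname{rk}\fk=\dim\ft^{\theta}$.

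Finally, the hypothesis that $(B\cap K)^{0}$ is a Borel subgroup of $K$ now reads, via the two displayed formulas, $\dim\ft^{\theta}+a+\tfrac{1}{2}s=\tfrac{1}{2}(\dim\fk+\operatorname{rk}\fk)=\dim\ft^{\theta}+a+\tfrac{1}{2}(s+t+r)$, which forces $t=r=0$. But $t=r=0$ says precisely that no positive root is real and every positive complex root is $\theta$-stable, i.e.\ $\theta(\Phi^{+})\subseteq\Phi^{+}$; since $\theta$ permutes $\Phi$ and $\theta^{2}=\operatorname{id}$, this gives $\theta(\Phi^{+})=\Phi^{+}$, hence $\theta(\fb)=\fb$ and $B$ is $\theta$-stable. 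The step requiring the most care is the construction of the $\theta$-stable maximal torus $T\subset B$ in the converse direction; the root-space bookkeeping behind the two dimension formulas is routine but must separate the four types of roots correctly.
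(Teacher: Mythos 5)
The paper does not prove this lemma; it simply cites \cite{BH}, Lemma 3, so your argument has to be judged on its own. Your dimension count is sound: the two displayed formulas for $\dim\fb^{\theta}$ and $\dim\fk$ are correct, and the $(2)\Rightarrow(1)$ direction is handled nicely --- the construction $L=Z_{G}(T_{K})$ and the observation that $[L,L]^{\theta}$ finite forces $[L,L]$ trivial is exactly the right way to manufacture a $\theta$-stable maximal torus inside $B$ when one is not handed to you.

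There are two small points where the write-up is thinner than it should be. First, in $(1)\Rightarrow(2)$ you invoke \cite{Rich}, Lemma 5.1 (``as recalled before the lemma'') to get that $(T^{\theta})^{0}$ is a maximal torus of $K$. But the paper only recalls this for the fixed fundamental torus $T_{0}$ of the standard pair, and the statement is simply false for a general $\theta$-stable maximal torus of $G$: already for $\fg=\fsl(2,\C)$ with $\theta=\Ad(\operatorname{diag}(1,-1))$, the ``compact'' Cartan is $\theta$-stable yet $\ft^{\theta}=0$ while $\operatorname{rk}\fk=1$. What makes the claim true in your situation is precisely the $\theta$-stability of $\fb$: it forces $r=0$, and once $r=0$ no root of $\Phi(\fg,\ft)$ can restrict to zero on $\ft^{\theta}$ (imaginary roots vanish on $\ft^{-\theta}$, so a vanishing one would be zero; a complex root vanishing on $\ft^{\theta}$ satisfies $\theta\alpha=-\alpha$, i.e.\ is real). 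Hence $Z_{\fk}(\ft^{\theta})=\ft^{\theta}$ and $\ft^{\theta}$ is a Cartan subalgebra of $\fk$. You should make this argument explicit rather than lean on the citation; without $\operatorname{rk}\fk=\dim\ft^{\theta}$ your final equality $\dim\fb^{\theta}=\tfrac12(\dim\fk+\operatorname{rk}\fk)$ does not follow. Second, in $(2)\Rightarrow(1)$ the inclusion $T=L\subseteq B$ is asserted but not justified: it holds because $T_{K}\subset B$ implies $L\cap B$ is a Borel subgroup of $L$, and $L$ a torus forces $L\cap B=L$. Both gaps are routine but should be filled.
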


Let $v_{0}\in V$ correspond to the $K$-orbit $K\cdot\fb_{0}$ so that $v_{0}=K T_{0}$, and we can take $\widehat{v_{0}}=1$.  Define $V_{0}:=\{v\in V:\, K\cdot \fb_{\hv}\mbox{ is closed }\}$.  
\begin{prop}\label{prop:closed}
The following statements are equivalent. 
\begin{enumerate}
\item $v\in V_{0}$. 
\item For any representative $\hv\in\V$ of $v\in V$, the Borel subalgebra $\fb_{\hv}=\Ad(\hv)\cdot \fb_{0}$ is $\theta$-stable.
\item $\phi(v)=1$.
\item $v\in W^{\theta}\times v_{0}$.
\end{enumerate}
\end{prop}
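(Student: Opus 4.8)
The plan is to prove the proposition by the chain of equivalences $(1)\Leftrightarrow(2)\Leftrightarrow(3)\Leftrightarrow(4)$, the only substantial geometric input being the passage from ``closed orbit'' to ``$\theta$-stable Borel'' via Lemma \ref{l:2closed}; the remaining equivalences will be formal consequences of the self-normalizing property of Borel subgroups and of Proposition \ref{prop:actions}.

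For $(1)\Leftrightarrow(2)$, let $B_{\hv}\subset G$ denote the Borel subgroup with Lie algebra $\fb_{\hv}=\Ad(\hv)\fb_{0}$. The orbit $K\cdot\fb_{\hv}$ is $K$-equivariantly isomorphic to $K/(K\cap B_{\hv})$, and being a locally closed subvariety of the projective variety $\B$, it is closed iff it is complete, hence (as $K$ is connected) iff $K\cap B_{\hv}$ is a parabolic subgroup of $K$. Since $K\cap B_{\hv}$ is solvable, being contained in $B_{\hv}$, this happens iff $(K\cap B_{\hv})^{0}$ is a Borel subgroup of $K$, which by Lemma \ref{l:2closed} applied to $B=B_{\hv}$ is equivalent to $\fb_{\hv}$ being $\theta$-stable. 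That this last condition is insensitive to the choice of representative follows because a different representative $k\hv t$ with $k\in K$, $t\in T_{0}$ gives $\fb_{k\hv t}=\Ad(k)\fb_{\hv}$, and $\theta(\Ad(k)\fb_{\hv})=\Ad(k)\theta(\fb_{\hv})$ since $\theta(k)=k$; hence $(2)$ is well posed and is equivalent to $(1)$.

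For $(2)\Leftrightarrow(3)$, observe that since $\fb_{0}$ is $\theta$-stable, $\fb_{\hv}=\Ad(\hv)\fb_{0}$ is $\theta$-stable iff $\Ad(\hv^{-1}\theta(\hv))\fb_{0}=\fb_{0}$, i.e.\ iff $\tau(\hv)=\hv^{-1}\theta(\hv)$ normalizes $B_{0}$; as Borel subgroups are self-normalizing, this means $\tau(\hv)\in B_{0}$. But $\tau(\hv)\in N$ by the definition of $\V$, and $N\cap B_{0}=T_{0}$, since an element of a Borel subgroup normalizing a maximal torus of that subgroup must lie in the torus. Hence $\fb_{\hv}$ is $\theta$-stable iff $\tau(\hv)\in T_{0}$, i.e.\ iff $\phi(v)=\tau(\hv)T_{0}=1$ in $W$. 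For $(3)\Leftrightarrow(4)$, note that $\phi(v_{0})=\tau(1)T_{0}=1$ since $\widehat{v_{0}}=1$; by Proposition \ref{prop:actions}(1), $\phi(w\times v_{0})=w*\phi(v_{0})=w*1=w\,\theta(w)^{-1}$, which is trivial in $W$ exactly when $w\in W^{\theta}$, giving $(4)\Rightarrow(3)$. Conversely, if $\phi(v)=1=\phi(v_{0})$, then Proposition \ref{prop:actions}(2) gives $v=w\times v_{0}$ for some $w\in W$, and then $1=\phi(v)=w\,\theta(w)^{-1}$ forces $w\in W^{\theta}$, so $v\in W^{\theta}\times v_{0}$; this gives $(3)\Rightarrow(4)$.

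I expect the main obstacle to be the geometric step $(1)\Leftrightarrow(2)$: one must know that a $K$-orbit on the flag variety is closed precisely when its isotropy group is parabolic in $K$, and then that a solvable parabolic is a Borel, at which point Lemma \ref{l:2closed} (itself the nontrivial structural fact about $\theta$-stable Borels) applies. The equivalences $(2)\Leftrightarrow(3)\Leftrightarrow(4)$ are then essentially bookkeeping with the Springer invariant and the twisted $W$-action, together with the routine verifications — already recorded in the text — that $\phi(v)$ and the $\theta$-stability of $\fb_{\hv}$ depend only on the orbit $v$.
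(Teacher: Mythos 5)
Your proof is correct, and it follows the same skeleton as the paper's: ``closed orbit $\Leftrightarrow$ isotropy contains a Borel of $K$ $\Leftrightarrow$ (Lemma~\ref{l:2closed}) $\fb_{\hv}$ is $\theta$-stable $\Leftrightarrow$ $\tau(\hv)\in T_0$ $\Leftrightarrow$ $\phi(v)=1$,'' and then Proposition~\ref{prop:actions} to translate the Springer-invariant condition into the cross-action statement. The genuine differences are organizational and one technical shortcut. The paper argues the four implications cyclically $(1)\Rightarrow(2)\Rightarrow(3)\Rightarrow(4)\Rightarrow(1)$, and in the last step $(4)\Rightarrow(1)$ it first shows $\fb_{\hv}$ is $\theta$-stable and then invokes a connectedness result of Richardson (his Lemma 5.1, which guarantees $B_{\hv}\cap K$ is connected when $B_{\hv}$ is $\theta$-stable) to conclude $B_{\hv}\cap K$ is an actual Borel of $K$, hence the orbit is complete. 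You instead prove $(1)\Leftrightarrow(2)$ as a self-contained biconditional, and your route from $(2)$ back to $(1)$ avoids Richardson's lemma: once $(K\cap B_{\hv})^0$ is a Borel of $K$, the orbit $K/(K\cap B_{\hv})$ is a quotient of $K/(K\cap B_{\hv})^0\cong\B_K$ and hence automatically complete, without needing to know $K\cap B_{\hv}$ is connected. (Your appeal to self-normalization of Borels to force connectedness of $K\cap B_{\hv}$ is a correct alternate justification, but strictly speaking it is not even needed for completeness.) This is a modest simplification: one fewer external input. Your well-posedness remark for condition $(2)$ — that $\theta$-stability of $\fb_{\hv}$ is insensitive to replacing $\hv$ by $k\hv t$ — is a useful check the paper leaves implicit. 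One phrasing caution: when you say ``closed iff $K\cap B_{\hv}$ is parabolic,'' note that $K\cap B_{\hv}$ is a priori possibly disconnected, and some authors build connectedness into the definition of parabolic; the clean formulation is that $K/(K\cap B_{\hv})$ is complete iff $(K\cap B_{\hv})^0$ contains (and hence, by solvability, equals) a Borel of $K$, which is the formulation Lemma~\ref{l:2closed} is tailored to.
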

\begin{proof}

We first show that (1) implies (2).  Let $v\in V_{0}$, and let $B_{\hv}\subset G$ be the Borel subgroup of $G$ corresponding to the Borel subalgebra $\fb_{\hv}$.  Then $K\cdot \fb_{\hv}\subset\B$ is projective, so that the homogeneous space $K/ (K\cap B_{\hv})\cong K\cdot\fb_{\hv}$ is projective, and hence
 $K\cap B_{\hv}$ is parabolic.  Since $K\cap B_{\hv}$ is solvable, it follows
that $K\cap B_{\hv}$ is a Borel subgroup of $K$.  Part (2) now follows from Lemma \ref{l:2closed}.   


We now prove that (2) implies (3).  Suppose that $v\in V$ and that $\fb_{\hv}=\Ad(\hv)\cdot \fb_{0}$ is $\theta$-stable.  Thus, $\Ad(\theta(\hv))\cdot \theta(\fb_{0})=\Ad(\hv)\cdot \fb_{0}$.  But $\fb_{0}$ is itself $\theta$-stable, implying that $\hv^{-1}\theta(\hv)\in B_{0}$.   But then $\hv^{-1}\theta(\hv)=\tau(\hv)\in B_{0}\cap N=T_{0}$ by definition of $\V$.  Thus, $\phi(v)= \tau (\hv)T_0=1$. 

We next show that (3) implies (4).  Suppose that $\phi(v)=1$.  Clearly, $\phi(v_{0})=1$.  It then follows from part (2) of Proposition \ref{prop:actions} that $v=w\times v_{0}$ for some $w\in W$.  But then part (1) of Proposition \ref{prop:actions} implies 
$$
1=\phi(v)=\phi(w\times v_{0})=w\phi(v_{0})\theta(w^{-1})=w\theta(w^{-1}),
$$ 
whence $w\in W^{\theta}$ and $v\in W^{\theta}\times v_{0}$.

Lastly, we show that (4) implies (1).  If $v\in W^{\theta}\times v_{0}$, then $v=K\dot{w} T_{0}$, where $\dot{w}\in N$ is a representative of $w\in W^{\theta}$.  We note that since $w\in W^{\theta}$, $\theta(\dot{w})=\dot{w} t $ for some $t\in T_{0}$.  It follows that $\fb_{\hv}=\Ad(\dot{w})\cdot \fb_{0}$ is $\theta$-stable, since $\ft_{0}\subset\fb_{0}$.  
Let $B_{\hv}$ be the Borel subgroup corresponding to $\fb_{\hv}$, so that $B_{\hv}$ is $\theta$-stable.  It follows from \cite{Rich}, Lemma 5.1 that $B_{\hv} \cap K$ is connected and therefore is a Borel subgroup by Lemma \ref{l:2closed}.  Since $(B_{\hv}\cap K)$ is a Borel subgroup,
 the variety $K/ (B_{\hv}\cap K)$ is complete, and 
 the orbit $K\cdot\fb_{\hv}\cong K/(B_{\hv}\cap K)$ is a complete subvariety of $\B$ and is therefore closed.    



\end{proof}

We now prove Theorem \ref{thm:closed}.  

\begin{proof}[Proof of Theorem \ref{thm:closed}]
It follows from Proposition \ref{prop:closed} that 
\begin{equation}\label{eq:crossorbit}
V_{0}=W^{\theta}\times v_{0}.
\end{equation}
  By \cite{RS}, Proposition 2.8, the stabilizer of $v_{0}$ in $W$ is precisely $W_{K}\subset W^{\theta}$.  Thus, the elements of the orbit $W^{\theta}\times v_{0}$ are in bijection with the coset space $W^{\theta}/ W_{K}$.  Equation (\ref{eq:correspond}) then follows from the definition of the cross action of $V$ on $W$. 
\end{proof}






Recall the notion of the type of a root $\alpha\in\Phi(\fg,\ft_{0})$ for $v$ from Definition \ref{defn:rootsforv}, and note that by Equation (\ref{eq:newinv}), 
\begin{equation}\label{eq:thetav}
\theta_{\hv}=\Ad(\hv^{-1}\theta(\hv))\circ\theta=\Ad(\tau(\hv))\circ\theta.
\end{equation}

\begin{prop}\label{prop:closedinv}
For $v\in V_{0}$, every positive root $\alpha\in\Phi^{+}(\fg,\ft_{0})$ is imaginary or complex $\theta$-stable for $v$.  Moreover, a positive root
 $\alpha\in\Phi^{+}(\fg,\ft_{0})$ is imaginary (resp. complex) for $v$ 
if and only if it is imaginary (resp. complex) for $v_{0}$. 
\end{prop}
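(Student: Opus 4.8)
The plan is to exploit the fact that, for $v\in V_{0}$, the involution $\theta_{\hv}$ associated to $v$ agrees with $\theta$ up to conjugation by an element of $T_{0}$, and that such conjugation is invisible at the level of roots. First I would invoke Proposition \ref{prop:closed}: since $v\in V_{0}$ we have $\phi(v)=1$, i.e. $\tau(\hv)=\hv^{-1}\theta(\hv)\in T_{0}$; setting $t=\tau(\hv)$ and using Equation (\ref{eq:thetav}), this reads $\theta_{\hv}=\Ad(t)\circ\theta$ with $t\in T_{0}$.

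The crux is then the elementary remark that $\Ad(t)$, for $t\in T_{0}$, acts as the identity on $\ft_{0}$, stabilizes each root space $\fg_{\alpha}$ (scaling it by $\alpha(t)$), and stabilizes $\fb_{0}$ since $\ft_{0}\subset\fb_{0}$. Hence $\theta_{\hv}$ and $\theta$ restrict to the same map on $\ft_{0}$, induce the same permutation of $\Phi(\fg,\ft_{0})$ — so $\theta_{\hv}(\alpha)=\theta(\alpha)$ for every root $\alpha$ — and both preserve the positive system $\Phi^{+}(\fg,\ft_{0})$ cut out by $\fb_{0}$.

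From here both assertions follow at once. For the ``moreover'' clause, note $\hat{v}_{0}=1$, so $\theta_{\hat{v}_{0}}=\theta$ and $\alpha$ is real, imaginary, or complex for $v_{0}$ precisely according as $\theta(\alpha)=-\alpha$, $\theta(\alpha)=\alpha$, or $\theta(\alpha)\neq\pm\alpha$; since $\theta_{\hv}(\alpha)=\theta(\alpha)$, the same trichotomy computed with $\theta_{\hv}$, which by Definition \ref{defn:rootsforv} gives the type of $\alpha$ for $v$, yields the identical answer. For the first assertion, let $\alpha\in\Phi^{+}(\fg,\ft_{0})$; then $\theta_{\hv}(\alpha)=\theta(\alpha)$ is again a positive root, hence it is not $-\alpha$, so $\alpha$ is not real for $v$. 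If $\theta_{\hv}(\alpha)=\alpha$ then $\alpha$ is imaginary for $v$; otherwise $\alpha$ is complex for $v$ with both $\alpha$ and $\theta_{\hv}(\alpha)$ positive, i.e. complex $\theta$-stable for $v$. (Equivalently, this is Remark \ref{rem:thetastableroots} applied to the involution $\theta_{\hv}$.)

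There is no real difficulty in this argument; the only thing to watch is that the ``type of $\alpha$ for $v$'' is by definition computed with the twisted involution $\theta_{\hv}$ rather than $\theta$, and that the argument only controls the induced action on $\Phi(\fg,\ft_{0})$. In particular it does not preserve the compact/non-compact refinement of the imaginary roots — $\Ad(t)$ can scale $e_{\alpha}$ by $\alpha(t)=\pm1$ and so flip the sign of $\theta(e_{\alpha})$ — but that finer distinction is not asserted in the proposition.
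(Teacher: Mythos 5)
Your proof is correct and follows essentially the same route as the paper: invoke Proposition \ref{prop:closed} to get $\phi(v)=1$, use Equation \eqref{eq:thetav} to deduce $\theta_{\hv}(\alpha)=\theta(\alpha)$ on roots, and then apply the $\theta$-stability of $\fb_{0}$ via Remark \ref{rem:thetastableroots}. The only difference is that you make explicit why $\Ad(\tau(\hv))$ acts trivially on $\Phi(\fg,\ft_0)$ when $\tau(\hv)\in T_0$, a step the paper compresses into the formula $\theta_{\hv}(\alpha)=\phi(v)(\theta(\alpha))$, and you helpfully flag that the argument does not control the compact/non-compact split of imaginary roots — which is exactly why the proposition is stated the way it is.
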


\begin{proof}
By Equation (\ref{eq:thetav}), for $v\in V$, 
$\theta_{\hv}(\alpha) = \phi(v) (\theta(\alpha))$
for $\alpha \in \Phi(\fg,\ft_{0})$.  
  Since $v\in V_{0}$, then $\phi(v)=1$ by Proposition \ref{prop:closed}, so 
  \begin{equation}\label{eq:thetasequal}
 \theta_{\hv}(\alpha)=\theta(\alpha)
 \end{equation}
for any $\alpha\in\Phi(\fg,\ft_{0})$.  
Since $\fb_{0}\subset\fg$ is $\theta$-stable, Remark \ref{rem:thetastableroots} implies that any $\alpha\in\Phi^{+}(\fg,\ft_{0})$ is complex $\theta$-stable or imaginary with respect to $\theta$.  Both statements of the proposition then follow immediately from Equation (\ref{eq:thetasequal}).    
\end{proof}

\begin{rem}\label{r:closed}
Let $v\in V_{0}$ and let $\theta_{\hv}$ be the involution associated to the orbit $v$.  To determine the action of $\theta_{\hv}$ on $\Phi(\fg,\ft_{0})$, Proposition \ref{prop:closedinv} implies that it suffices to find which roots are compact (resp. non-compact) imaginary for $v$.  By Theorem \ref{thm:closed}, we may take $\hv=\dot{w^{-1}}$, where $\dot{w^{-1}}$ is a representative for $w^{-1}\in W^{\theta}$.  By Proposition \ref{prop:rootsforv}, it follows that a root $\alpha\in\Phi(\fg,\ft_{0})$ is compact (resp. non-compact) imaginary for $v$ if and only if $w^{-1}(\alpha)$ is compact (resp. non-compact) for the pair $(\Ad(w^{-1})\fb_{0},\ft_{0})$ with respect to $\theta$.
\end{rem}

   \begin{nota} \label{nota:standard}
   We will make use of the following notation for flags in $\C^{n}$.  Let 
   $$
  \mathcal{F}=( V_{0}=\{0\}\subset V_{1}\subset\dots\subset V_{i}\subset\dots\subset V_{n}=\C^{n}).
   $$
   be a flag in $\C^{n}$, with $\dim V_{i}=i$ and $V_{i}=\mbox{span}\{v_{1},\dots, v_{i}\}$, with each $v_{j}\in\C^{n}$.  We will denote this flag $\mathcal{F}$ 
by
   $$
  \mathcal{F}=  (v_{1}\subset  v_{2}\subset\dots\subset v_{i}\subset v_{i+1}\subset\dots\subset v_{n}). 
   $$
   We denote the standard ordered basis of $\C^{n}$ by $\{e_{1},\dots, e_{n}\}$. For $1 \le i, j \le n$, let $E_{ij}$ be the matrix with $1$
in the $(i,j)$-entry and $0$ elsewhere. 
\end{nota}

\begin{exam}\label{ex:closed}
 Let $G=GL(n,\C)$ and let $\theta$ be conjugation by the diagonal matrix $c=diag[1,1, \dots, 1, -1]$.  Then $K=GL(n-1,\C)\times G(1,\C)$ and $\fk=\fgl(n-1,\C)\oplus\fgl(1,\C)$.  Since this involution is inner, $W^{\theta}=W=\mathcal{S}_{n}$, the symmetric group on $n$ letters and $W_{K}=\mathcal{S}_{n-1}$.  We can take $\fb_{0}$ to be the standard Borel subalgebra of $n\times n$ upper triangular matrices and $\ft_{0}\subset\fb_{0}$ to be the diagonal matrices.  By Theorem \ref{thm:closed}, the $n$ closed orbits are then parameterized by the identity permutation and the $n-1$ cycles $\{(n-1\, n),\,(n-2\, n-1\, n),\, \dots, \, (i\dots n),\, \dots , (1\dots n)\}$.  We consider the closed $K$-orbit $v\in V_{0}$ corresponding to the cycle $w=(i\dots n)$.  By Equation (\ref{eq:correspond}), it is generated by the Borel subalgebra $\fb_i := \Ad(w^{-1})\fb_{0}$, which is the stabilizer of the flag:
 \begin{equation}\label{eq:flagi}
{\mathcal{F}}_{i}:= (e_{1}\subset \dots\subset e_{i-1}\subset\underbrace{e_{n}}_{i}\subset e_{i}\subset \dots\subset e_{n-1}).
\end{equation}
Notice that ${\mathcal F}_{n}$ is the standard flag in $\C^{n}$ and 
${\mathcal F}_{1}$ is $K$-conjugate to the opposite flag. 
We denote  $Q_i := K\cdot \fb_i$, so $Q_1, \dots, Q_n$ are the $n$ closed orbits.

Let $\epsilon_{i}\in \ft_{0}^{*}$ be the linear functional $\epsilon_{i}(t)=t_{i}$ for $t\in\ft_{0}$, where $t=\mbox{diag}[t_{1},\dots, t_{i},\dots, t_{n}], \, t_{i}\in\C$.  According to \cite{MO}, any root of the form $\epsilon_{i}-\epsilon_{k}$ or $\epsilon_{k}-\epsilon_{i}$ is non-compact imaginary for $v$ while all other roots are compact imaginary, and the involution $\theta_{\hv}$ associated to $v$ acts on the functionals by $\theta_{\hv}(\epsilon_{i})=\epsilon_{i}$ for all $i$.  The second assertion follows easily from Equation (\ref{eq:thetasequal}).   By Remark \ref{r:closed}, $\alpha=\epsilon_{k}-\epsilon_{j}$ is compact (resp non-compact) imaginary for $v$ if and only if $w^{-1}(\alpha)$ is compact (resp. non-compact) imaginary with respect to $\theta$.  The first assertion then follows from the observation that roots of the form $\epsilon_{n}-\epsilon_{k}$ and $\epsilon_{k}-\epsilon_{n}$ are non-compact imaginary with respect to $\theta$ and all other roots are compact imaginary. 


 \end{exam}

\subsection{ General $K$-orbits in $\B$}\label{s:arbitrary}





  
In this section, we compute $\tau(\hv)$ and $\phi(v)$ inductively
based on the closed orbit case in  Section \ref{s:closed}.  We thus
obtain a formula for $\theta_{\hv}$ for any $K$-orbit in $\B$.


For the first step, we take a $K$-orbit $Q$ and a simple root $\alpha$
and construct a $K$-orbit denoted $m(s_\alpha)\cdot Q$ which 
either coincides with $Q$ or contains $Q$ in its closure as a divisor.
  Let $Q=K\cdot\ \fb_{\hv}\subset\B$ for $v\in V$, 
let $\alpha\in\Phi(\fg,\ft_{0})$ be a simple root, and let $\fp_{\alpha}$ be the minimal parabolic subalgebra generated by $\alpha$.  Let $P_{\alpha}$ denote
the corresponding parabolic subgroup, and let
 $\pi_{\alpha}: G/B_{0}\to G/P_{\alpha}$ denote the canonical projection, which
is a $P_{\alpha}/B_0 = {\mathbb P}^1$-bundle.
\begin{dfn-lem}\label{dfnlem:star}  
The variety $\pi_{\alpha}^{-1}\pi_{\alpha}(Q)$ 
 is irreducible and $K$ acts on $\pi_{\alpha}^{-1}\pi_{\alpha}(Q)$ with finitely many orbits.  The unique open $K$-orbit in $\pi_{\alpha}^{-1}\pi_{\alpha}(Q)$ is denoted by $m(s_\alpha)\cdot Q$.
\end{dfn-lem}
\begin{proof}
Note that $\pi_{\alpha}^{-1}\pi_{\alpha}(Q)
= K \hat{v}P_{\alpha}/B_0$, and it follows easily that $\pi_{\alpha}^{-1}\pi_{\alpha}(Q)$ is irreducible, since it is the image of the double coset $KvP_{\alpha}$ under the projection $p:G\to G/B_{0}$.  The variety $K \hat{v}P_{\alpha}/B_0$ is clearly $K$-stable.  It follows that it has a unique open orbit, since the set of $K$-orbits in $K \hat{v}P_{\alpha}/B_0$  is a subset of the set of $K$-orbits on
$\B$, and hence is finite.
\end{proof}

The orbit  $m(s_\alpha)\cdot Q$ may be equal to $Q$ itself.  However, in the case where $m(s_\alpha)\cdot Q\neq Q$, then $\dim m(s_\alpha)\cdot Q=\dim Q+1$, since the map $\pi_{\alpha}: G/B_{0}\to G/P_{\alpha}$ is a $\mathbb{P}^{1}$-bundle.
To compute $m(s_\alpha)\cdot Q$ explicitly (following \cite{Vg}, Lemma 5.1), we recall first some facts about
involutions for $SL(2, \C)$.

Let $\Pi$ denote the set of simple roots with respect to $\ft_{0}$ and let $\alpha\in\Pi$.  Let $h_{\alpha}=\frac{2 H_{\alpha}}{<\alpha,\alpha>}$ with $H_{\alpha}\in\ft_{0}$ such that $<H_{\alpha},x>=\alpha(x)$ for $x\in\ft_{0}$, and let $e_{\alpha}\in\fg_{\alpha}$, $f_{\alpha}\in \fg_{-\alpha}$ be chosen so that $[e_{\alpha}, f_{\alpha}]=h_{\alpha}$.  Hence, the subalgebra $\fs(\alpha)=\mbox{span}\{e_{\alpha}, f_{\alpha}, h_{\alpha}\}$ forms a Lie algebra isomorphic to $\mathfrak{sl}(2,\C)$. 
Let $\phi_{\alpha}: \mathfrak{sl}(2)\to \fs(\alpha)$ be the map
\begin{equation}\label{eq:sl2map}
\phi_{\alpha}:\left[\begin{array}{cc} 0 & 1\\
0 & 0\end{array}\right]\to e_{\alpha},\;    
\phi_{\alpha}:\left[\begin{array}{cc} 0 & 0\\
1 & 0\end{array}\right]\to f_{\alpha},\;
\phi_{\alpha}:\left[\begin{array}{cc} 1 & 0\\
0 & -1\end{array}\right]\to h_{\alpha}\; 
\end{equation}
Then $\phi_{\alpha}:\fsl(2)\to\fs(\alpha)$ is a Lie algebra isomorphism, which integrates to an injective homomorphism of Lie groups $\phi_{\alpha}: SL(2,\C) \to G$, which we will also denote by $\phi_{\alpha}$. We let $S(\alpha)$ be
its image.

To perform computations, it is convenient for us to choose specific representatives for the Cayley transform $u_{\alpha}$ with respect to $\alpha$ and the simple reflection $s_{\alpha}$.  Let 
\begin{equation}\label{eq:cayley}
u_{\alpha}=\phi_{\alpha}\left(\frac{1}{\sqrt{2}}\left[\begin{array}{cc} 1 & \imath\\
\imath & 1\end{array}\right]\right).
\end{equation}
Note that $g=  \frac{1}{\sqrt{2}}\left[\begin{array}{cc} 1 & \imath\\
\imath & 1\end{array}\right]\in SL(2,\C)$ is the Cayley transform which conjugates the torus in $SL(2,\C)$ containing the diagonal split maximal torus of
$SL(2,\R)$ to a torus of $SL(2,\C)$ containing a compact maximal torus 
of $SL(2,\R)$.   Let 
\begin{equation}\label{eq:dotsalpha}
\dot{s}_{\alpha}=\phi_{\alpha}\left(\left[\begin{array}{cc} 0 & \imath\\
\imath & 0\end{array}\right]\right).
\end{equation}
  Then $\dot{s}_{\alpha}$ is a representative for $s_{\alpha}\in W$.  Note that $u_{\alpha}^{2}=\dot{s}_{\alpha}$. 

  Let $\theta_{1,1}:\,SL(2,\C)\to SL(2,\C)$ be the involution on $SL(2,\C)$ given by 
$$
\theta_{1,1}(g)=\left[\begin{array}{cc} 1 & 0\\
0 & -1\end{array}\right] g\left[\begin{array}{cc} 1 & 0\\
0 & -1\end{array}\right] 
$$
for $g\in SL(2,\C)$.   

 \begin{lem}\label{l:imag}
 Suppose $\alpha\in\Pi$ is compact (resp non-compact) imaginary for $v$.  Then $-\alpha$ is compact (resp non-compact) imaginary for $v$.  
 \end{lem}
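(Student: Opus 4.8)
The plan is to read off how the involution $\theta_{\hv}$ associated to $v$ acts on the $\mathfrak{sl}(2)$-triple $\{e_{\alpha},h_{\alpha},f_{\alpha}\}$ attached to $\alpha$. First I would unwind the definitions: $\alpha$ being imaginary for $v$ means $\theta_{\hv}(\alpha)=\alpha$ as an element of $\ft_{0}^{*}$ (Definition \ref{defn:rootsforv}), so $\theta_{\hv}$ stabilizes the line $\fg_{\alpha}$, and since $\theta_{\hv}$ is an involution it acts on $\fg_{\alpha}$ by $+1$ (the compact case) or by $-1$ (the non-compact case); moreover $\theta_{\hv}(\alpha)=\alpha$ forces $\theta_{\hv}(-\alpha)=-\alpha$, so $-\alpha$ is imaginary for $v$ as well and $\theta_{\hv}$ acts on $\fg_{-\alpha}$ by some sign. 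The lemma is exactly the assertion that the two signs coincide.

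The one point that requires a small argument is that $\theta_{\hv}(h_{\alpha})=h_{\alpha}$. I would deduce this from the fact that $\theta_{\hv}=\Ad(\hv^{-1}\theta(\hv))\circ\theta$ is an automorphism of $\fg$ stabilizing $\ft_{0}$: such an automorphism permutes the roots in $\Phi(\fg,\ft_{0})$ and carries each coroot $h_{\beta}$ to $h_{\theta_{\hv}(\beta)}$, so applying this to $\beta=\alpha$ and using $\theta_{\hv}(\alpha)=\alpha$ gives $\theta_{\hv}(h_{\alpha})=h_{\alpha}$. (Alternatively, $\theta_{\hv}$ preserves the invariant form $<\cdot,\cdot>$, being a composition of inner automorphisms with $\theta$, whence $<\theta_{\hv}(H_{\alpha}),x>=<H_{\alpha},\theta_{\hv}(x)>=\alpha(\theta_{\hv}(x))=\alpha(x)=<H_{\alpha},x>$ for $x\in\ft_{0}$, forcing $\theta_{\hv}(H_{\alpha})=H_{\alpha}$ by nondegeneracy of the form on $\ft_{0}$.) Granting this, I would write $\theta_{\hv}(e_{\alpha})=a\,e_{\alpha}$ and $\theta_{\hv}(f_{\alpha})=b\,f_{\alpha}$ with $a,b\in\{\pm 1\}$ and apply $\theta_{\hv}$ to the relation $[e_{\alpha},f_{\alpha}]=h_{\alpha}$: this yields $ab\,h_{\alpha}=h_{\alpha}$, hence $ab=1$ and $a=b$. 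That is precisely the statement that $\alpha$ is compact (resp. non-compact) imaginary for $v$ if and only if $-\alpha$ is.

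I do not expect a genuine obstacle: the argument is purely formal once one knows $\theta_{\hv}$ is an involutive automorphism of $\fg$ fixing $\alpha$. If one wishes to avoid coroots altogether, the same conclusion follows by observing that $\theta_{\hv}$ stabilizes the subalgebra $\fs(\alpha)\cong\mathfrak{sl}(2,\C)$ together with its two nilpotent lines $\fg_{\pm\alpha}$; since every automorphism of $\mathfrak{sl}(2,\C)$ is inner and the stabilizer in $PGL(2,\C)$ of two distinct points of $\mathbb{P}^{1}$ is a maximal torus, $\theta_{\hv}|_{\fs(\alpha)}$ scales $e_{\alpha}$ and $f_{\alpha}$ by reciprocal scalars, which must be equal since they are $\pm 1$.
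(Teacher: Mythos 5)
Your proof is correct. The paper's own argument is a two-line version of what you do: it observes that $\theta_{\hv}(\fg_{\alpha})=\fg_{\alpha}$ forces $\theta_{\hv}(\fg_{-\alpha})=\fg_{-\alpha}$, and then invokes invariance of the Killing form to conclude that the two signs match, i.e.\ $\langle e_{\alpha}, e_{-\alpha}\rangle = \langle \theta_{\hv}(e_{\alpha}),\theta_{\hv}(e_{-\alpha})\rangle = ab\,\langle e_{\alpha}, e_{-\alpha}\rangle$ with $\langle e_{\alpha}, e_{-\alpha}\rangle\neq 0$. You instead first verify $\theta_{\hv}(h_{\alpha})=h_{\alpha}$ and then apply $\theta_{\hv}$ to the bracket relation $[e_{\alpha},f_{\alpha}]=h_{\alpha}$ to get $ab=1$; this is an equally valid shortcut and the intermediate step about $h_{\alpha}$ (which you justify two ways, one of them precisely Killing-form invariance) is genuinely needed for the bracket route. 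The $\mathfrak{sl}(2)$ remark at the end is a third, also correct, packaging of the same observation. In short: same elementary idea, slightly different choice of invariant structure to cash it in.
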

\begin{proof}
Since $\theta_{\hv}(\fg_{\alpha})=\fg_{\alpha}$, it follows easily that
$\theta_{\hv}(\fg_{-\alpha})=\fg_{-\alpha}.$   The rest of the proof follows since
$\theta_{\hv}$ preserves the Killing form.
\end{proof}

\begin{lem}\label{l:sl2}
If $\alpha$ is non-compact imaginary for $v$, then 
\begin{equation}\label{eq:sl2}
\theta_{\hv}\circ\phi_{\alpha}=\phi_{\alpha}\circ \theta_{1,1}.
\end{equation}
\end{lem}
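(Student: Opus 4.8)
The plan is to reduce the claimed intertwining identity $\theta_{\hv}\circ\phi_{\alpha}=\phi_{\alpha}\circ\theta_{1,1}$ to a statement about the three basis elements $e_{\alpha}, f_{\alpha}, h_{\alpha}$ of $\fs(\alpha)$, since both sides are linear maps and $\phi_{\alpha}$ is determined by its values on the standard $\fsl(2)$-triple as in Equation (\ref{eq:sl2map}). Concretely, I would check the identity on $e_{\alpha}$, on $f_{\alpha}$, and on $h_{\alpha}$ separately; because $\fs(\alpha)$ is spanned by these, and because $\theta_{1,1}$ sends the standard nilpotent $\left[\begin{smallmatrix}0&1\\0&0\end{smallmatrix}\right]$ to its negative, $\left[\begin{smallmatrix}0&0\\1&0\end{smallmatrix}\right]$ to its negative, and fixes $\left[\begin{smallmatrix}1&0\\0&-1\end{smallmatrix}\right]$, the target values are $\phi_{\alpha}\circ\theta_{1,1}(e) = -e_{\alpha}$, $\phi_{\alpha}\circ\theta_{1,1}(f) = -f_{\alpha}$, and $\phi_{\alpha}\circ\theta_{1,1}(h) = h_{\alpha}$.

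First I would handle $e_{\alpha}$. Since $\alpha$ is non-compact imaginary for $v$, Definition \ref{defn:roots}(2)(b) applied to the involution $\theta_{\hv}$ gives exactly $\theta_{\hv}(e_{\alpha}) = -e_{\alpha}$, which matches. Next, for $h_{\alpha}$: because $\alpha$ is imaginary for $v$, $\theta_{\hv}$ preserves $\ft_0$ and acts on $\alpha \in \Phi(\fg,\ft_0)$ trivially, i.e. $\theta_{\hv}(\alpha)=\alpha$; dualizing and using that $\theta_{\hv}$ preserves the Killing form (hence the coroot $h_{\alpha}$ is determined by $\alpha$), one gets $\theta_{\hv}(h_{\alpha}) = h_{\alpha}$, again matching. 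Finally, for $f_{\alpha}$: by Lemma \ref{l:imag}, $-\alpha$ is also non-compact imaginary for $v$, so $\theta_{\hv}(\fg_{-\alpha}) = \fg_{-\alpha}$ with $\theta_{\hv}$ acting by a scalar $c$ there; applying $\theta_{\hv}$ to the relation $[e_{\alpha}, f_{\alpha}] = h_{\alpha}$ and using $\theta_{\hv}(e_{\alpha}) = -e_{\alpha}$, $\theta_{\hv}(h_{\alpha}) = h_{\alpha}$ forces $[-e_{\alpha}, c f_{\alpha}] = h_{\alpha}$, hence $c = -1$, i.e. $\theta_{\hv}(f_{\alpha}) = -f_{\alpha}$, as needed.

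Having verified the Lie algebra identity on the spanning set, the identity $\theta_{\hv}\circ\phi_{\alpha} = \phi_{\alpha}\circ\theta_{1,1}$ of maps $\fsl(2,\C) \to \fg$ follows by linearity, and this is the content of the lemma. The only mild subtlety — the step I would be most careful about — is pinning down the scalar by which $\theta_{\hv}$ acts on $\fg_{-\alpha}$; this is forced by the bracket relation normalizing the $\fsl(2)$-triple rather than by Definition \ref{defn:roots} directly, since a priori the "non-compact imaginary" condition is phrased in terms of $e_{\alpha}$, not $f_{\alpha}$. Everything else is a routine unwinding of the definitions of the root types for $v$ in terms of $\theta_{\hv}$.
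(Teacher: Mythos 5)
Your proof is correct and follows essentially the same route as the paper's: reduce to the Lie algebra level and check the identity on the $\fsl(2)$-triple $e_{\alpha}, f_{\alpha}, h_{\alpha}$, with $\theta_{\hv}(e_{\alpha})=-e_{\alpha}$ coming directly from the definition of non-compact imaginary. The one minor variation is the order in which the remaining two basis elements are handled: the paper invokes Lemma~\ref{l:imag} to get $\theta_{\hv}(f_{\alpha})=-f_{\alpha}$ directly (the non-compact imaginary condition on $-\alpha$ applies to \emph{any} nonzero vector in $\fg_{-\alpha}$, so in particular to $f_{\alpha}$; your worry that the definition is ``phrased in terms of $e_{\alpha}$'' is unfounded since the type is independent of the choice of root vector in a one-dimensional root space) and then obtains $\theta_{\hv}(h_{\alpha})=h_{\alpha}$ from the bracket, whereas you establish $\theta_{\hv}(h_{\alpha})=h_{\alpha}$ first via Killing-form duality and then deduce the sign on $\fg_{-\alpha}$ from the bracket, using Lemma~\ref{l:imag} only to conclude $\theta_{\hv}$ preserves $\fg_{-\alpha}$. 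Both derivations are sound; yours is marginally more self-contained but does not buy anything new.
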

\begin{proof}
 It suffices to verify Equation (\ref{eq:sl2}) on the Lie algebra $\fsl(2,\C)$.   On $\fsl(2,\C)$ the maps in Equation (\ref{eq:sl2}) are linear, and we need only check the equation on a basis for $\fsl(2,\C)$.  Since $\alpha$ is non-compact imaginary for $v$, we have $\theta_{\hv}(e_{\alpha})=-e_{\alpha}$, $\theta_{\hv}(f_{\alpha})=-f_{\alpha}$, and $\theta_{\hv}(h_{\alpha})=h_{\alpha}$ by Lemma \ref{l:imag}, and the result follows.
\end{proof}

\begin{rem}\label{r:thetavsl2}
It follows from the proof of Lemma \ref{l:sl2} that $\fs(\alpha)^{\theta_{\hv}} = \C h_\alpha$.
\end{rem}

\begin{prop}\label{p:diffroots}
Let $Q=K\cdot\fb_{\hv}$ with $v\in V$ and let $\alpha\in \Phi(\fg, \ft_{0})$ be a simple root.  Then $m(s_\alpha)\cdot Q\neq Q$ if and only if $\alpha$ is non-compact imaginary for $v$ or $\alpha$ is complex $\theta$-stable for $v$.  If $\alpha$ is non-compact imaginary, then $m(s_\alpha)\cdot Q=K\cdot \fb^{\prime}$, with $\fb^{\prime}=\Ad(\hv u_{\alpha}) \fb_{0}$, where $u_{\alpha}$ is the Cayley transform with respect to $\alpha$.  If $\alpha$ is complex $\theta$-stable, then $m(s_{\alpha})\cdot Q=K\cdot\fb^{\prime}$, with $\fb^{\prime}=\Ad(\hv s_{\alpha})\fb_{0}$. 
\end{prop}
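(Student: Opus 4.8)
The plan is to reduce the entire analysis to rank-one computations inside the subgroup $S(\alpha) \cong SL(2,\C)$, exactly as in \cite{Vg}, Lemma 5.1. First I would observe that $\pi_\alpha^{-1}\pi_\alpha(Q) = K\hv P_\alpha/B_0$, so the question of whether $m(s_\alpha)\cdot Q = Q$ is equivalent to asking whether $K\hv P_\alpha = K\hv B_0$, i.e., whether the orbit $(K\cap \hv B_0\hv^{-1})$ already surjects onto $P_\alpha/B_0 \cong \PR^1$ under the natural action, or whether it has a larger orbit. Transporting by $\Ad(\hv^{-1})$, this becomes a question about the action of $\theta_{\hv}$ (the involution associated to $v$, via Equation (\ref{eq:newinv})) on the parabolic $\fp_\alpha$ and its Levi factor $\fs(\alpha)$: the $\PR^1 = P_\alpha/B_0$ carries an action of the group $(P_\alpha)^{\theta_{\hv}}$, and whether there is a dense orbit of dimension $> 0$ is governed by the type of $\alpha$ for $v$. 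Concretely, using the classification of $\theta_{\hv}$-types on the $SL(2)$-triple $\{e_\alpha, f_\alpha, h_\alpha\}$ from Definition \ref{defn:roots}, one checks the four cases:

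\begin{itemize}
\item[] if $\alpha$ is \emph{compact imaginary} for $v$, then $\theta_{\hv}$ fixes $e_\alpha, f_\alpha, h_\alpha$, so $S(\alpha)^{\theta_{\hv}} = S(\alpha)$ acts transitively on $\PR^1$, hence $\pi_\alpha^{-1}\pi_\alpha(Q)$ is a single $K$-orbit and $m(s_\alpha)\cdot Q = Q$;
\item[] if $\alpha$ is \emph{real} or \emph{complex non-$\theta$-stable} for $v$ (so $\theta_{\hv}(\alpha) = -\alpha$, or $\theta_{\hv}(\alpha)$ negative), then $\fg_\alpha \not\subset \fp_\alpha^{\theta_{\hv}}$ in the relevant way and again the fibre is a single orbit, $m(s_\alpha)\cdot Q = Q$;
\item[] if $\alpha$ is \emph{non-compact imaginary} for $v$, then by Remark \ref{r:thetavsl2}, $\fs(\alpha)^{\theta_{\hv}} = \C h_\alpha$, so $S(\alpha)^{\theta_{\hv}}$ is one-dimensional and has a dense orbit of dimension $1$ on $\PR^1$ whose base point moves $\fb_0$ to a new Borel; the Cayley transform $u_\alpha$ of Equation (\ref{eq:cayley}) is precisely the element implementing this, so $m(s_\alpha)\cdot Q = K\cdot\Ad(\hv u_\alpha)\fb_0$;
\item[] if $\alpha$ is \emph{complex $\theta$-stable} for $v$, then $\theta_{\hv}$ interchanges $\fg_\alpha$ with a root space in a different simple direction, the two Borels $\fb_0$ and $\Ad(s_\alpha)\fb_0$ lie in the same fibre but distinct $K$-orbits, and $m(s_\alpha)\cdot Q = K\cdot\Ad(\hv s_\alpha)\fb_0$.
\end{itemize}

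For the non-compact imaginary case I would make the identification explicit: apply Lemma \ref{l:sl2}, which gives $\theta_{\hv}\circ\phi_\alpha = \phi_\alpha\circ\theta_{1,1}$, and note that $g = \tfrac{1}{\sqrt 2}\left[\begin{smallmatrix} 1 & \imath \\ \imath & 1\end{smallmatrix}\right]$ conjugates the $\theta_{1,1}$-split torus to a $\theta_{1,1}$-stable torus of the other Cartan class in $SL(2,\C)$; transporting through $\phi_\alpha$ shows $\hv u_\alpha \in \V$ and that $\Ad(\hv u_\alpha)\fb_0$ lies in the open $K$-orbit of $\pi_\alpha^{-1}\pi_\alpha(Q)$, since its dimension exceeds that of $Q$ by one (the $\PR^1$-bundle computation in the paragraph after Definition-Lemma \ref{dfnlem:star}). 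The complex $\theta$-stable case is similar but easier: $\Ad(\hv s_\alpha)\fb_0 \neq \Ad(\hv)\fb_0$ as Borels in the same $P_\alpha$-fibre, and one checks $\hv s_\alpha \in \V$ directly from $\tau(\hv s_\alpha) = \dot s_\alpha^{-1}\tau(\hv)\theta(\dot s_\alpha)$, which lies in $N$; the dimension count again forces this to be the open orbit.

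The main obstacle I anticipate is the bookkeeping in the non-compact imaginary case: verifying that $u_\alpha$ as chosen in Equation (\ref{eq:cayley}) genuinely lands in $\V = \{g : g^{-1}\theta(g) \in N\}$ and that the resulting Borel is in the \emph{open} orbit of the fibre rather than a lower-dimensional one. This requires combining Lemma \ref{l:sl2} with a careful computation of $\tau(\hv u_\alpha) = u_\alpha^{-1}\theta_{\hv}(u_\alpha)\,\tau(\hv)$ inside $S(\alpha)$, using that $\theta_{1,1}(g) = \left[\begin{smallmatrix} 1 & 0 \\ 0 & -1\end{smallmatrix}\right]g\left[\begin{smallmatrix} 1 & 0 \\ 0 & -1\end{smallmatrix}\right]$ and that $u_\alpha^2 = \dot s_\alpha$ represents $s_\alpha$; the dimension-one computation then follows because $S(\alpha)^{\theta_{\hv}} = \C h_\alpha$ is a one-dimensional torus acting on $\PR^1$ with a dense orbit. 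The converse direction — that $m(s_\alpha)\cdot Q = Q$ in all remaining cases — follows from the transitivity of $S(\alpha)^{\theta_{\hv}}$ on $\PR^1$ when $\alpha$ is compact imaginary, and from the fact that when $\theta_{\hv}(\alpha)$ is a negative root the $K$-orbit $Q$ is already stable under the parabolic induction (no new Borel appears in the fibre), which one reads off from Remark \ref{rem:thetastableroots}.
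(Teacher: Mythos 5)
Your overall strategy—reduce to the $\PR^1$-fiber $\pi_\alpha^{-1}\pi_\alpha(Q)$, identify the group acting on it as the image of $K\cap\Ad(\hv)P_\alpha$ in the Levi quotient (which is locally $\Ad(\hv)S(\alpha)$), and read off the orbit picture from the type of $\alpha$ for $\theta_{\hv}$ using the $\fsl(2)$ reduction, the Cayley transform $u_\alpha$, and the dimension count—is exactly the route the paper takes. The paper is somewhat more explicit in one place: it first establishes, via a short argument, that $K$-orbits in $K\hv P_\alpha/B_0$ are in bijection with $K_{\hv}$-orbits in $L_{\hv} \cong \PR^1$; your version quietly assumes this, though it does follow from the kind of reasoning you outline.

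There is one genuine slip in the middle bullet. For $\alpha$ \emph{real} or \emph{complex non-$\theta$-stable}, you say ``again the fibre is a single orbit.'' That is not what happens: in the real case the image of $K_{\hv}$ in $PGL_2$ is a one-dimensional torus (generated by something like $e_\alpha + c f_\alpha$), which acts on $\PR^1$ with two fixed points and a one-dimensional open orbit; in the complex non-$\theta$-stable case that image is the \emph{opposite} Borel (since $f_\alpha + \theta_{\hv}(f_\alpha)$ lies in $\fk^{\theta_{\hv}}\cap\fp_\alpha$ and projects to $f_\alpha$, whereas $e_\alpha + \theta_{\hv}(e_\alpha)$ does not lie in $\fp_\alpha$), which acts with one fixed point and a one-dimensional open orbit. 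In both cases the fibre therefore contains more than one $K$-orbit; the correct statement, which still gives the desired conclusion, is that the base point $\hv B_0/B_0$ lies in the \emph{open} orbit of this group, so that $Q$ is already the open $K$-orbit in $\pi_\alpha^{-1}\pi_\alpha(Q)$ and hence $m(s_\alpha)\cdot Q = Q$. Only for $\alpha$ compact imaginary does the image of $K_{\hv}$ act transitively on $\PR^1$, making the fibre literally a single $K$-orbit. With this correction, your analysis matches the paper's, and the cases you do spell out (the bijection on the $\PR^1$-fiber, $\fs(\alpha)^{\theta_{\hv}}=\C h_\alpha$ for $\alpha$ non-compact imaginary, the identification of the open orbit through $\hv u_\alpha B_0/B_0$, and the dimension comparison) are precisely the ingredients of the paper's proof.
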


\begin{proof}
Let $K_{\hv} = K \cap \Ad(\hv) P_{\alpha}$ be the stabilizer in
$K$ of $\pi_{\alpha}({\hv}B_0/B_{0})$. 
Let $L_{\hv}=\pi_{\alpha}^{-1}\pi_{\alpha}(\hv B_0/B_0)$, which is identified
with $\Ad(\hv)P_\alpha/\Ad(\hv)B_0 \cong \mathbb{P}^{1}$.
We claim that the map $\chi$ from the set of $K_{\hv}$-orbits in $L_{\hv}$
to the set of $K$-orbits in $K \hv P_\alpha /B_0$ given by
$\chi(\hQ)=K\cdot \hQ$ is bijective.
Indeed, if $Q_1 \subset K \hv P_{\alpha} /B_0$ is a $K$-orbit, then for $z_1, z_2
\in Q_1 \cap L_{\hv}$, we have $z_2 = k\cdot z_1$ for some $k\in K$,
and $\pi_{\alpha}(z_1)=\pi_{\alpha}(z_2)$.  It follows that $k$ stabilizes
$\pi_{\alpha}({\hv}B_0/B_{0})$, so $k\in K_{\hv}$.  Hence, $Q_1 \cap L_{\hv}$
is a $K_{\hv}$-orbit, and it is routine to check that $Q_1 \mapsto
Q_1 \cap L_{\hv}$ is inverse to $\chi$, giving the claim.  
Let $U^{\alpha}$ be the unipotent radical of $P_{\alpha}$, and let
$Z(M_{\alpha})^0$ be the identity component of the center of a Levi subgroup
of $P_{\alpha}$.   Then $\Ad(\hv)P_\alpha$ acts on the fiber $L_{\hv}$ through 
its quotient $\tilde{S}_{\hv}:=\Ad(\hv)P_{\alpha}/\Ad(\hv)(Z(M_{\alpha})^0 U^{\alpha})$,
which is locally isomorphic to $\Ad(\hv)S(\alpha)$.  Hence $K_{\hv}$ 
acts on $L_{\hv}$ through its image ${\tilde{K}}_{\hv}$ in $\tilde{S}_{\hv}$.
For $\alpha$  non-compact imaginary for $v$,
it follows from Remark \ref{r:thetavsl2} that  ${\tilde{K}}_{\hv}$ has
Lie algebra $\Ad(\hv)(\C h_{\alpha})$, and hence
${\tilde{K}}_{\hv}$  is either a torus of $\tilde{S}_{\hv}$ normalizing $\hv  B_{0}/B_{0}$ or the normalizer of such a torus.  Hence, the points $\hv B_0/B_0$ and $\hv s_\alpha B_0/B_0$ are
in zero-dimensional ${\tilde{K}}_{\hv}$-orbits, and the complement
$L_{\hv} - (\hv B_0/B_0 \cup \hv s_\alpha B_0/B_0)$ is a single 
${\tilde{K}}_{\hv}$-orbit containing $\hv u_\alpha B_0/B_0$.  From the
definition of the bijection $\chi$, it follows
that $K\hv B_0/B_0$ is a proper subset of $\overline{K\hv u_\alpha B_0/B_0}$, where the closure is taken in the variety $K\hv P_{\alpha}/B_{0}$.
Since $\dim(K\hv P_{\alpha}/B_0)=\dim(K\hv B_0/B_0) + 1$, we conclude
that $m(s_\alpha)\cdot Q = K\hv u_{\alpha} B_0/B_0$.
This verifies the proposition in the case of non-compact
imaginary roots, and the other cases are similar, and discussed
in detail in section 2 of \cite{RSexp}.
\end{proof} 

\begin{rem}
In \cite{Vg}, the author discriminates between two types of non-compact roots.
For $G=GL(n,\C)$ and $K=GL(p,\C)\times GL(n-p,\C)$, all non-compact roots
for all orbits are type I.  
\end{rem}




\begin{nota}\label{nota:qijdef}
We let $G=GL(n,\C)$ and $K=GL(n-1,\C)\times G(1,\C)$ as in Example
\ref{ex:closed}.  We let $\fb_{i,j}$ be the Borel subalgebra stabilizing
the flag
$$
 {\mathcal{F}}_{i,j} =   (e_{1}\subset \dots\subset \underbrace{e_{i}+e_{n}}_{i}\subset e_{i+1}\subset \dots \subset e_{j-1} \subset \underbrace{e_{i}}_{j}\subset e_{j} \subset \dots \subset e_{n-1}),
$$
and we let $Q_{i,j}=K\cdot \fb_{i,j}$.
\end{nota}

\begin{exam}\label{ex:noncptimag}
We let $G$ and $K$ be as in Example \ref{ex:closed} and compute
$m(s_\alpha)\cdot Q_c$ for each closed $K$-orbit $Q_c$.  By Example
\ref{ex:closed}, $Q_{c}= Q_i =K\cdot\fb_{i}$, where $\fb_{i}$ is the stabilizer of the flag ${\mathcal{F}}_i$
from Equation \eqref{eq:flagi}.  Let $Q_{i}:=K\cdot\fb_{i}$ and let $v_i$ be the corresponding element
of $V$.  By Example \ref{ex:closed}, the simple
roots $\alpha_{i-1}= \epsilon_{i-1} - \epsilon_{i}$ and $\alpha_i =
\epsilon_{i} - \epsilon_{i+1}$ are the only non-compact imaginary
simple roots for $Q_i$, and all other simple roots are compact (for $i=1$
and $i=n$, one of these two roots does not exist).  Since
$Q_i = K \cdot \dot{w} \fb_0$, where $\dot{w}$ is a representative
for the element $(n\, \dots\, i)$ of $W$, it follows from 
Proposition \ref{p:diffroots} that $m(s_{\alpha_{i-1}})\cdot Q_i = 
K\cdot \dot{w}u_{\alpha_{i-1}} \fb_0$.  A routine computation
shows that the K-orbit $K\cdot \dot{w}u_{\alpha_{i-1}} \fb_0$
contains the stabilizer of the flag
$$
 {\mathcal{F}}_{i-1,i} =   (e_{1}\subset \dots\subset \underbrace{e_{i-1}+e_{n}}_{i-1}\subset e_{i-1}\subset \dots  \subset e_{n-1}).
$$
Hence, 
\begin{equation}\label{eq:monoidclosedminus}
m(s_{\alpha_{i-1}})\cdot Q_i = Q_{i-1,i}.
\end{equation}
A similar calculation shows that
\begin{equation}\label{eq:monoidclosed}
m(s_{\alpha_{i}})\cdot Q_i = Q_{i,i+1}.
\end{equation}
\end{exam}

Let $Q_{c}=K\cdot\fb_{\hv}$ be a closed $K$-orbit and let $B_{\hv}\subset G$ be the Borel subgroup with $Lie(B_{\hv})=\fb_{\hv}$.  We observed in the proof of Proposition \ref{prop:closed} that $K\cap B_{\hv}$ is a Borel subgroup of $K$ so that $Q_{c}\cong K/ (K\cap B_{\hv})$ is isomorphic to the flag variety $\B_K$ of $K$.  


\begin{dfn-nota}
 For a $K$-orbit $Q$ on $\B$, we let $l(Q):= \dim(Q) - \dim(\B_K)$.  The number $l(Q)$ is called the \emph{length} of the $K$-orbit $Q$. 
\end{dfn-nota}

\begin{prop}\label{prop:chain}
Let $Q$ be any $K$-orbit in $\mathcal{B}$.  Then there exists a sequence of simple roots $\alpha_{i_{1}},\cdots,\alpha_{i_{k}}\in \Phi^{+}(\fg,\ft_{0})$ and a closed orbit $Q_{c}$ such that $Q=m(s_{\alpha_{i_{k}}})\cdot\ldots \cdot m(s_{\alpha_{i_{1}}})\cdot Q_{c}$.  We let $Q_j=m(s_{\alpha_{i_{j}}})\cdot\ldots \cdot m(s_{\alpha_{i_{1}}})\cdot Q_{c}$.   If
for $j=1, \dots, k$, the root $\alpha_{i_{j}}$ is complex $\theta$-stable 
or non-compact imaginary for $Q_{j-1}$, then $l(Q)=k$.
\end{prop}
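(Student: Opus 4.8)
The plan is to separate the statement into routine bookkeeping about lengths and the one genuinely structural input --- how non-closed $K$-orbits degenerate --- and to run an induction on the length $l(Q)$.

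First I would record the two facts about $l$ that make the induction go. Since a closed orbit $Q_c=K\cdot\fb_{\hv}$ is $K$-equivariantly isomorphic to $K/(K\cap B_{\hv})$ with $K\cap B_{\hv}$ a Borel subgroup of $K$, we have $\dim Q_c=\dim\B_K$, so $l(Q_c)=0$. Conversely, for an arbitrary orbit $Q=K\cdot\fb_{\hv}$ the stabilizer $K\cap B_{\hv}$ is solvable, so its identity component lies in a Borel subgroup of $K$ and $\dim Q\ge\dim\B_K$; if equality holds then $(K\cap B_{\hv})^{0}$ is a Borel subgroup of $K$, whence $B_{\hv}$ is $\theta$-stable by Lemma \ref{l:2closed} and $Q$ is closed by Proposition \ref{prop:closed}. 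Thus $l(Q)\ge 0$, with $l(Q)=0$ precisely for closed orbits. Moreover, by Proposition \ref{p:diffroots} the condition ``$\alpha$ is complex $\theta$-stable or non-compact imaginary for $Q'$'' is exactly ``$m(s_\alpha)\cdot Q'\ne Q'$'', and in that case $\dim\,(m(s_\alpha)\cdot Q')=\dim Q'+1$ since $\pi_\alpha$ is a $\mathbb{P}^{1}$-bundle (as observed just after Definition-Lemma \ref{dfnlem:star}).

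For the existence of the chain I would induct on $l(Q)$. If $l(Q)=0$ then $Q$ is closed, and we take $k=0$, $Q_c=Q$. If $l(Q)>0$ then $Q$ is not closed, and the crux is to produce a simple root $\alpha$ and a $K$-orbit $Q'$ with $m(s_\alpha)\cdot Q'=Q$ and $Q'\ne Q$; the dimension count above then forces $l(Q')=l(Q)-1$, so the inductive hypothesis gives a closed orbit $Q_c$ and a chain (with all steps non-trivial) realizing $Q'$, to which we append $\alpha_{i_k}:=\alpha$. To produce $\alpha$ and $Q'$ one uses that a non-closed orbit always lies in the image of $m(s_\alpha)$ for some simple $\alpha$: since $B_{\hv}$ is not $\theta$-stable, its relative position to $\theta(B_{\hv})$ --- equivalently the Springer invariant $\phi(v)$ --- is nontrivial, and a descent along a suitable simple root $\alpha$ exhibits $Q$ as $m(s_\alpha)$ of a strictly smaller orbit (a complex or real degeneration). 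This descent is where I expect the real work to lie; it is the input imported from the Richardson--Springer analysis of the monoid generated by the operators $m(s_\alpha)$ on the finite set of $K$-orbits, and I would cite \cite{RS} and \cite{RSexp} for it rather than reprove it.

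It remains to prove the length formula. Suppose $Q=m(s_{\alpha_{i_k}})\cdots m(s_{\alpha_{i_1}})\cdot Q_c$ with each $\alpha_{i_j}$ complex $\theta$-stable or non-compact imaginary for $Q_{j-1}$. By the first paragraph each step is non-trivial, so $\dim Q_j=\dim Q_{j-1}+1$ for $j=1,\dots,k$, and hence $\dim Q=\dim Q_k=\dim Q_c+k=\dim\B_K+k$. Therefore $l(Q)=\dim Q-\dim\B_K=k$. Finally, since deleting a trivial step ($Q_j=Q_{j-1}$) from any chain yields a shorter chain with the same endpoint and the same intermediate orbits, the chain produced in the existence step may be taken with all steps non-trivial, and then its length is forced to equal $l(Q)$, consistent with the above.
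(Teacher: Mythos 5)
Your proof is correct and takes essentially the same route as the paper: the paper disposes of the proposition with a one-line citation to Richardson--Springer, and you have simply unpacked what that citation delivers. Concretely, you isolate the one genuinely structural input (every non-closed $K$-orbit admits a descent $Q = m(s_\alpha)\cdot Q'$ with $Q'\neq Q$ for some simple $\alpha$, i.e.\ the orbit poset under the weak order is generated upward from the closed orbits) and correctly attribute it to \cite{RS}, \cite{RSexp}; the remaining content --- that $l(Q)\ge 0$ with equality precisely when $Q$ is closed, via $K\cap B_{\hv}$ solvable together with Lemma \ref{l:2closed} and Proposition \ref{prop:closed}, and that a nontrivial application of $m(s_\alpha)$ raises dimension by exactly one because $\pi_\alpha$ is a $\mathbb{P}^1$-bundle and by Proposition \ref{p:diffroots} --- is exactly the bookkeeping that makes the length formula fall out. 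The induction on $l(Q)$ is clean, and you are right that the chain your existence argument produces already has all steps nontrivial, so the final remark about deleting trivial steps is harmless but redundant.
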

\begin{proof}
This follows easily from \cite{RS}, Theorem 4.6 .
\end{proof}


Let $Q_{v}$ be the $K$-orbit corresponding to $v\in V$.  We now compute 
the involution associated to the orbit $m(s_\alpha)\cdot Q_{v}$ when $\alpha$ is complex $\theta$-stable or non-compact imaginary for $v$ from the involution for the orbit $Q_{v}$.  We denote the parameter $v^{\prime}\in V$ for $m(s_\alpha)\cdot Q_{v}$ by $v^{\prime}=m(s_{\alpha})\cdot v$.  By results from Section \ref{s:closed} and Proposition \ref{prop:chain}, we can then determine $\theta_{\widehat{v^{\prime}}}$ for any $v^{\prime}$ in $V$.

There are two different cases we need to consider.  

\noindent {\bf Case 1}: $\alpha$ is non-compact imaginary for $v$.  Let $v^{\prime}=m(s_\alpha)\cdot v$.  Then by Proposition \ref{p:diffroots}, $K\cdot \fb_{\widehat{v^{\prime}}}=K\cdot \Ad(\hv u_{\alpha})\fb_{0}$, where $u_{\alpha}$ is the representative for the Cayley transform with respect to $\alpha$ given in Equation (\ref{eq:cayley}).  
  
%
%

We can now compute $\theta_{\widehat{v^{\prime}}}$ in terms of $\theta_{\widehat{v}}$.  

\begin{prop}\label{p:noncpt}
Let $v^{\prime}=m(s_\alpha)\cdot v$, where $\alpha$ is non-compact imaginary for $v$.  
\par\noindent (1) Then $\hv u_{\alpha}\in\V$ is a representative of $v^{\prime}$, and 
$$\tau(\widehat{v^{\prime}})= \tau(\hv u_{\alpha})=\dot{s_{\alpha}}^{-1} \tau(\hat{v}),$$
and 
$$ \phi(v^{\prime})=s_{\alpha} \phi(v).$$  
\par\noindent (2) 
The involution for $v^{\prime}$ is given by 
$$
\theta_{\widehat{v^{\prime}}}= \Ad(\tau(\widehat{v^{\prime}}))\circ \theta=\Ad(\dot{s_{\alpha}}^{-1})\Ad(\tau(\hat{v}))\circ \theta=\Ad(\dot{s_{\alpha}}^{-1})\circ \theta_{\hv},
$$
and $\theta_{\widehat{v^{\prime}}}$ acts on the roots $\Phi(\fg,\ft_{0})$ by:
$$\theta_{\widehat{v^{\prime}}}= s_{\alpha} \theta_{\hat{v}}.$$
\end{prop}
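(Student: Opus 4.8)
## Proof proposal

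The plan is to verify the two assertions of part (1) by direct computation with the explicit representative $u_\alpha$, and then to deduce part (2) as a formal consequence of part (1) together with the formulas already recorded in the excerpt, namely $\theta_{\hv} = \Ad(\tau(\hv))\circ\theta$ from Equation (\ref{eq:thetav}) and the identity $\theta_{\hv}(\beta) = \phi(v)(\theta(\beta))$ for $\beta \in \Phi(\fg,\ft_0)$ that was used in the proof of Proposition \ref{prop:closedinv}.

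First I would establish that $\hv u_\alpha \in \V$, i.e. that $(\hv u_\alpha)^{-1}\theta(\hv u_\alpha) \in N$. Compute $(\hv u_\alpha)^{-1}\theta(\hv u_\alpha) = u_\alpha^{-1}(\hv^{-1}\theta(\hv))\theta(u_\alpha) = u_\alpha^{-1}\,\tau(\hv)\,\theta(u_\alpha)$. Since $\alpha$ is non-compact imaginary for $v$, Lemma \ref{l:sl2} gives $\theta_{\hv}\circ\phi_\alpha = \phi_\alpha\circ\theta_{1,1}$; applying this to the $SL(2,\C)$ element $g = \frac{1}{\sqrt 2}\left[\begin{smallmatrix} 1 & \imath\\ \imath & 1\end{smallmatrix}\right]$ whose image under $\phi_\alpha$ is $u_\alpha$, one gets $\theta_{\hv}(u_\alpha) = \phi_\alpha(\theta_{1,1}(g))$. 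A direct $2\times 2$ computation shows $\theta_{1,1}(g) = \frac{1}{\sqrt 2}\left[\begin{smallmatrix} 1 & -\imath\\ -\imath & 1\end{smallmatrix}\right] = g^{-1}\left[\begin{smallmatrix} 0 & \imath \\ \imath & 0\end{smallmatrix}\right]$, so that $\theta_{\hv}(u_\alpha) = u_\alpha^{-1}\dot s_\alpha$, equivalently $\Ad(\tau(\hv))\theta(u_\alpha) = u_\alpha^{-1}\dot s_\alpha$, i.e. $\tau(\hv)\theta(u_\alpha) = u_\alpha^{-1}\dot s_\alpha \tau(\hv)$. Substituting back, $(\hv u_\alpha)^{-1}\theta(\hv u_\alpha) = u_\alpha^{-1}(u_\alpha^{-1}\dot s_\alpha \tau(\hv)) = u_\alpha^{-2}\dot s_\alpha \tau(\hv) = \dot s_\alpha^{-1}\tau(\hv)$, using $u_\alpha^2 = \dot s_\alpha$ from just after Equation (\ref{eq:dotsalpha}). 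Since $\dot s_\alpha \in N$ and $\tau(\hv) \in N$, this lies in $N$, so $\hv u_\alpha \in \V$ and it represents $v' = m(s_\alpha)\cdot v = K\cdot\fb_{\hv u_\alpha}T_0$ by Proposition \ref{p:diffroots}. Moreover the computation just gave $\tau(\widehat{v'}) = \tau(\hv u_\alpha) = \dot s_\alpha^{-1}\tau(\hv)$, and passing to $N/T_0 = W$ yields $\phi(v') = s_\alpha^{-1}\phi(v) = s_\alpha\phi(v)$ since $s_\alpha^2 = 1$.

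For part (2), the first chain of equalities is immediate: by Equation (\ref{eq:thetav}) applied to $v'$, $\theta_{\widehat{v'}} = \Ad(\tau(\widehat{v'}))\circ\theta = \Ad(\dot s_\alpha^{-1}\tau(\hv))\circ\theta = \Ad(\dot s_\alpha^{-1})\circ\Ad(\tau(\hv))\circ\theta = \Ad(\dot s_\alpha^{-1})\circ\theta_{\hv}$. For the induced action on $\Phi(\fg,\ft_0)$: using $\theta_{\widehat{v'}}(\beta) = \phi(v')(\theta(\beta))$ (the displayed formula from the proof of Proposition \ref{prop:closedinv}, valid for any $v\in V$) together with $\phi(v') = s_\alpha\phi(v)$, we get $\theta_{\widehat{v'}}(\beta) = s_\alpha\phi(v)(\theta(\beta)) = s_\alpha(\theta_{\hv}(\beta))$ for all $\beta$, which is exactly $\theta_{\widehat{v'}} = s_\alpha\theta_{\hv}$ as an action on roots. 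The main obstacle is the bookkeeping in the first step: one must be careful about the direction of the conjugation conventions (left versus right $T_0$-cosets, the sign in $\theta_{\hv} = \Ad(\hv^{-1}\theta(\hv))\circ\theta$, and the fact that $\Ad$ on $G$ denotes conjugation here), and one must correctly push the $SL(2,\C)$ identity $\theta_{1,1}(g) = u_\alpha^{-1}\dot s_\alpha$ (in $SL(2)$-coordinates) through $\phi_\alpha$ — but this is the routine matrix computation that the Vogan-type argument in \cite{Vg}, \cite{RSexp} already sanctions, so no genuine difficulty arises beyond care with conventions.
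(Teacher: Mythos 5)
Your overall architecture matches the paper's: use Lemma \ref{l:sl2} and the explicit $2\times 2$ Cayley matrix to compute $\theta_{\hv}(u_\alpha)$, hence $\tau(\hv u_\alpha)$, and then deduce part (2) formally from Equation (\ref{eq:thetav}) and the displayed identity $\theta_{\hv} = \phi(v)\circ\theta$ on roots. Part (2) of your argument is correct and essentially the paper's.

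However, part (1) of your write-up contains two compensating arithmetic errors. You compute $\theta_{1,1}(g) = \frac{1}{\sqrt 2}\left[\begin{smallmatrix} 1 & -\imath\\ -\imath & 1\end{smallmatrix}\right]$, which is right, but then claim this equals $g^{-1}\left[\begin{smallmatrix} 0 & \imath\\ \imath & 0\end{smallmatrix}\right]$. In fact $g^{-1}\left[\begin{smallmatrix} 0 & \imath\\ \imath & 0\end{smallmatrix}\right] = g$, not the matrix you wrote; the correct factorization is simply $\theta_{1,1}(g)=g^{-1}$, which gives $\theta_{\hv}(u_\alpha) = u_\alpha^{-1}$, not $u_\alpha^{-1}\dot{s}_\alpha$. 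Carrying your incorrect value forward would give $\tau(\hv u_\alpha) = u_\alpha^{-2}\dot{s}_\alpha\tau(\hv) = \dot{s}_\alpha^{-1}\dot{s}_\alpha\tau(\hv) = \tau(\hv)$, which is wrong, but at this step you assert $u_\alpha^{-2}\dot{s}_\alpha\tau(\hv)=\dot{s}_\alpha^{-1}\tau(\hv)$, dropping an $\dot{s}_\alpha$ and thereby accidentally cancelling the first error. The correct chain is: $\theta_{\hv}(u_\alpha)=u_\alpha^{-1}$, so $\theta(u_\alpha)=\tau(\hv)^{-1}u_\alpha^{-1}\tau(\hv)$, and hence $\tau(\hv u_\alpha)=u_\alpha^{-1}\tau(\hv)\theta(u_\alpha)=u_\alpha^{-2}\tau(\hv)=\dot{s}_\alpha^{-1}\tau(\hv)$. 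With this repair the argument is sound and is the same as the paper's; you should also verify (as the paper does, via Equation (\ref{eq:varietyV}) and Proposition \ref{p:diffroots}) that $\hv u_\alpha$ indeed lies in $\V$ and represents $m(s_\alpha)\cdot v$, which your computation of $\tau(\hv u_\alpha)\in N$ already supplies.
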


\begin{proof}
It is easy to verify that if 
$g=\frac{1}{\sqrt{2}}\left[\begin{array}{cc} 1 & \imath\\
\imath & 1\end{array}\right]$, then $\theta_{1,1}(g)=g^{-1}$.
Hence, by Lemma \ref{l:sl2}, it follows that
$\theta_{\hv}(u_{\alpha})=u_{\alpha}^{-1}$.  Thus, by Equation
\eqref{eq:thetav}, $\theta(u_{\alpha})=\Ad(\tau(\hv)^{-1})(u_{\alpha}^{-1}).$
It follows that
$$
\tau(\hv u_\alpha) = u_{\alpha}^{-1} \tau(\hv) \theta(u_{\alpha})
   = u_{\alpha}^{-1} \tau(\hv) \tau(\hv)^{-1} u_{\alpha}^{-1} \tau(\hv)
   = u_{\alpha}^{-2} \tau(\hv).$$
Since $u_{\alpha}^{-2}= \dot{s_{\alpha}}^{-1}$, it follows that $\tau(\hv u_\alpha)
= \dot{s_{\alpha}}^{-1} \tau(\hv)$.  By Equation (\ref{eq:varietyV}) and Proposition \ref{p:diffroots}, it follows
that $\hv u_{\alpha}\in \mathcal{V}$ is a representative of $m(s_\alpha)\cdot v$.
  By Equation \eqref{eq:mapphi}, we have $\phi(m(s_{\alpha})\cdot v)=s_{\alpha}\phi(v)$.  Part (2) of the proposition now follows from part (1) and Equation (\ref{eq:thetav}). 



\end{proof}

\noindent{\bf Case 2}:  $\alpha$ is complex $\theta$-stable for $v$.



\begin{prop}\label{p:cplx}
Let $\alpha$ be complex $\theta$-stable for $v$.  
\par\noindent (1) Let $v^{\prime}=m(s_{\alpha})\cdot v$.  Then $v^{\prime}$ has representative $\widehat{v^{\prime}}=\hv \dot{s_{\alpha}}$, so that $ v^{\prime}=s_{\alpha}\times v\in V$ 
and $$ \tau(\hv\dot{s_{\alpha}})=\dot{s_{\alpha}}^{-1}\tau(\hv)\theta(\dot{s_{\alpha}}),$$ whence
$$\phi(v^{\prime})=s_{\alpha}\phi(v)\theta(s_{\alpha}).$$ 
\par\noindent (2) The involution $\theta_{\widehat{v^{\prime}}}$ on $\fg$ associated to $v^{\prime}$ is given by 
$$
\theta_{\widehat{v^{\prime}}}=\Ad(\dot{s_{\alpha}}^{-1}\tau(\hv)\theta(\dot{s_{\alpha}}))\circ \theta=\Ad(\dot{s_{\alpha}}^{-1})\circ \theta_{\hv}\circ \Ad(\dot{s_{\alpha}}), 
$$
so that the action of $\theta_{\widehat{v^{\prime}}}$ on the roots $\Phi(\fg, \ft_{0})$ is given by: 
 $$\theta_{\widehat{v^{\prime}}}=s_{\alpha}\phi(v)\theta(s_{\alpha})\theta = s_\alpha \theta_{\hv} s_{\alpha}.$$
\end{prop}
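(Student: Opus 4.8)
The plan is to follow the template of the proof of Proposition \ref{p:noncpt}, with the Cayley transform $u_{\alpha}$ replaced by the reflection representative $\dot{s_{\alpha}}$. By Proposition \ref{p:diffroots}, since $\alpha$ is complex $\theta$-stable for $v$ we have $m(s_{\alpha})\cdot Q_{v}=K\cdot\fb^{\prime}$ with $\fb^{\prime}=\Ad(\hv\dot{s_{\alpha}})\fb_{0}=\fb_{\hv\dot{s_{\alpha}}}$, so the natural candidate for a representative of $v^{\prime}$ is $\widehat{v^{\prime}}:=\hv\dot{s_{\alpha}}$. First I would check $\widehat{v^{\prime}}\in\V$: since $T_{0}$ is $\theta$-stable, its normalizer $N=N_{G}(T_{0})$ is $\theta$-stable, so $\theta(\dot{s_{\alpha}})\in N$, and as $\hv\in\V$ the computation
$$\tau(\widehat{v^{\prime}})=(\hv\dot{s_{\alpha}})^{-1}\theta(\hv\dot{s_{\alpha}})=\dot{s_{\alpha}}^{-1}\hv^{-1}\theta(\hv)\theta(\dot{s_{\alpha}})=\dot{s_{\alpha}}^{-1}\tau(\hv)\theta(\dot{s_{\alpha}})$$
shows $\tau(\widehat{v^{\prime}})\in N$, so $\widehat{v^{\prime}}\in\V$; this also gives the stated formula for $\tau(\hv\dot{s_{\alpha}})$. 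Projecting that identity to $W=N/T_{0}$, and using that $\theta$ acts on $W$ (as $T_{0}$ is $\theta$-stable), yields $\phi(v^{\prime})=s_{\alpha}\phi(v)\theta(s_{\alpha})$ (recall $s_{\alpha}^{-1}=s_{\alpha}$). To identify $v^{\prime}$ with $s_{\alpha}\times v$, I would observe that $\dot{s_{\alpha}}$ maps to $s_{\alpha}$ in $W$ and $s_{\alpha}^{2}=e$, so $\dot{s_{\alpha}}^{2}\in T_{0}$ and hence $\dot{s_{\alpha}}^{-1}\in\dot{s_{\alpha}}T_{0}$; therefore $K\widehat{v^{\prime}}T_{0}=K\hv\dot{s_{\alpha}}T_{0}=K\hv\dot{s_{\alpha}}^{-1}T_{0}=s_{\alpha}\times v$ by the definition (\ref{eq:crossaction}) of the cross action.

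For part (2), Equation (\ref{eq:thetav}) gives $\theta_{\widehat{v^{\prime}}}=\Ad(\tau(\widehat{v^{\prime}}))\circ\theta=\Ad(\dot{s_{\alpha}}^{-1}\tau(\hv)\theta(\dot{s_{\alpha}}))\circ\theta$. The key algebraic identity is that $\theta$, being an automorphism of $G$, satisfies $\Ad(\theta(g))\circ\theta=\theta\circ\Ad(g)$ for all $g\in G$; applying this with $g=\dot{s_{\alpha}}$ and then using (\ref{eq:thetav}) for $v$ yields
$$\theta_{\widehat{v^{\prime}}}=\Ad(\dot{s_{\alpha}}^{-1})\circ\Ad(\tau(\hv))\circ\theta\circ\Ad(\dot{s_{\alpha}})=\Ad(\dot{s_{\alpha}}^{-1})\circ\theta_{\hv}\circ\Ad(\dot{s_{\alpha}}),$$
which is the claimed formula on $\fg$. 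To obtain the action on roots, I would restrict to $\ft_{0}$ and combine three things: that $\theta_{\hv}$ acts on $\Phi(\fg,\ft_{0})$ as $\phi(v)\circ\theta$ (as noted in the proof of Proposition \ref{prop:closedinv}); the formula for $\phi(v^{\prime})$ above; and the elementary fact that $\theta\circ s_{\alpha}=s_{\theta(\alpha)}\circ\theta=\theta(s_{\alpha})\circ\theta$ on roots, valid because $\theta$ preserves the Killing form. Together these give $\theta_{\widehat{v^{\prime}}}=s_{\alpha}\phi(v)\theta(s_{\alpha})\theta=s_{\alpha}\theta_{\hv}s_{\alpha}$ on $\Phi(\fg,\ft_{0})$.

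All of this is computational; the only genuine care required is bookkeeping --- tracking inverses and the order of composition when translating a group-level identity into the induced action on $\Phi(\fg,\ft_{0})$, and being consistent about the chosen representatives $\dot{s_{\alpha}}$ and $\widehat{v^{\prime}}$. The one subtlety I expect a reader to watch for is that $\dot{s_{\alpha}}$ is not literally an involution in $G$, only modulo $T_{0}$; this is precisely why I would make the relation $\dot{s_{\alpha}}^{2}\in T_{0}$ explicit at the step identifying $v^{\prime}$ with $s_{\alpha}\times v$.
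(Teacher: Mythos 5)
Your proposal is correct and follows the same route as the paper: invoke Proposition~\ref{p:diffroots} to obtain the representative $\hv\dot{s_\alpha}$, identify the resulting double coset with $s_\alpha\times v$ via Equation~\eqref{eq:crossaction}, and then unwind $\tau$, $\phi$, and Equation~\eqref{eq:thetav} by direct computation. The paper's own proof compresses everything after the first sentence into ``the rest follows by definitions''; you have simply supplied those definitions-level details (including the small but worthwhile point that $\dot{s_\alpha}^2\in T_0$, which reconciles the representative $\hv\dot{s_\alpha}$ from Proposition~\ref{p:diffroots} with the $\dot{s_\alpha}^{-1}$ appearing in the cross-action formula), and your bookkeeping with $\Ad(\theta(g))\circ\theta=\theta\circ\Ad(g)$ and $\theta\circ s_\alpha=\theta(s_\alpha)\circ\theta$ is accurate.
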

\begin{proof}
By Proposition \ref{p:diffroots}, we have 
$ \fb_{\widehat{v^{\prime}}}=\Ad(\hv \dot{s_{\alpha}})\cdot \fb_{0}$ so that $v^{\prime}=s_{\alpha}\times v$ by Equation \eqref{eq:crossaction}.  The rest of the proof
follows by definitions. 
\end{proof}

\begin{lem}\label{l:noncptnextorbit}
Let $Q_v$ be the $K$-orbit corresponding to $v\in V$, and let $\alpha$ be a complex $\theta$-stable
simple root for $v$.  Let $\beta$ be a root of $\Phi^+(\fg, \ft_0)$.
Then $\beta$ is noncompact imaginary for $v$ if and only if 
$s_{\alpha}(\beta)$ is non-compact imaginary for $m(s_{\alpha})\cdot v$.
\end{lem}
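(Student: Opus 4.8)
The plan is to reduce everything to the action of $\theta_{\widehat{v'}}$ on the relevant root spaces and use the explicit formula for $\theta_{\widehat{v'}}$ from Case 2. Since $\alpha$ is complex $\theta$-stable for $v$ and $v' = m(s_\alpha)\cdot v$, Proposition \ref{p:cplx} gives $\widehat{v'} = \hv\dot{s}_\alpha$ and, acting on $\Phi(\fg,\ft_0)$, $\theta_{\widehat{v'}} = s_\alpha \theta_{\hv} s_\alpha$. Let $\beta \in \Phi^+(\fg,\ft_0)$. First I would show that $\beta$ is imaginary for $v'$ if and only if $s_\alpha(\beta)$ is imaginary for $v$: indeed $\theta_{\widehat{v'}}(\beta) = \beta$ is equivalent to $s_\alpha\theta_{\hv}s_\alpha(\beta) = \beta$, i.e. $\theta_{\hv}(s_\alpha\beta) = s_\alpha\beta$. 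Thus the map $\beta \mapsto s_\alpha\beta$ carries imaginary roots for $v$ bijectively onto imaginary roots for $v'$. (One should also note that $\beta \in \Phi^+$ need not force $s_\alpha\beta \in \Phi^+$; but since $\alpha$ is a \emph{simple} root, $s_\alpha$ permutes $\Phi^+ \setminus \{\alpha\}$, and one checks separately that $\alpha$ itself, being complex $\theta$-stable for $v$, is complex—not imaginary—for both $v$ and $v'$, using Proposition \ref{p:cplx}(2), so the edge case does not arise.)

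Next I would upgrade this from ``imaginary'' to ``non-compact imaginary'' by tracking the action of $\theta_{\widehat{v'}}$ on root \emph{vectors}, not just on roots. Choose a representative $\dot{s}_\alpha \in N$ as in Equation (\ref{eq:dotsalpha}); conjugation by $\dot{s}_\alpha$ sends a root vector $e_\beta \in \fg_\beta$ to a nonzero multiple of a root vector in $\fg_{s_\alpha\beta}$, say $\Ad(\dot{s}_\alpha)e_\beta = c_\beta\, e_{s_\alpha\beta}$ for some $c_\beta \in \C^\times$. Using $\theta_{\widehat{v'}} = \Ad(\dot{s}_\alpha^{-1})\circ\theta_{\hv}\circ\Ad(\dot{s}_\alpha)$ on $\fg$ (Proposition \ref{p:cplx}(2)), we get, for $\beta$ with $s_\alpha\beta$ non-compact imaginary for $v$,
\[
\theta_{\widehat{v'}}(e_\beta) = \Ad(\dot{s}_\alpha^{-1})\theta_{\hv}\bigl(c_\beta e_{s_\alpha\beta}\bigr) = \Ad(\dot{s}_\alpha^{-1})\bigl(-c_\beta e_{s_\alpha\beta}\bigr) = -e_\beta,
\]
so $\beta$ is non-compact imaginary for $v'$; and the same computation with $+$ in place of $-$ shows the compact case is preserved. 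Running the argument with the roles of $v$ and $v'$ interchanged (note $m(s_\alpha)\cdot v' = v$ since $s_\alpha$ is an involution and $\alpha$ is complex $\theta$-stable for $v'$ as well, by Proposition \ref{p:cplx}(2) and Remark \ref{rem:thetastableroots}) gives the converse, completing the equivalence.

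The main obstacle I anticipate is the bookkeeping around which roots stay positive and the edge root $\alpha$ itself: one must be careful that $\beta \mapsto s_\alpha\beta$ really is a bijection of $\Phi^+$-level ``non-compact imaginary'' sets, which hinges on $\alpha$ being simple and on verifying that $\alpha$ is never imaginary for $v$ or $v'$ under the hypothesis. Once that is pinned down, the root-vector computation above is routine. I would also double-check the sign in $\Ad(\dot{s}_\alpha)e_\beta$ is irrelevant: only the fact that $c_\beta \neq 0$ and that $\theta_{\hv}$ is linear and scales $e_{s_\alpha\beta}$ by $\pm 1$ matters, so no normalization of structure constants is needed. This argument is parallel to the analysis in \cite{RS}, Section 2, and \cite{RSexp}, Section 2, and could alternatively be cited from there.
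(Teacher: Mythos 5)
Your approach is essentially the paper's: reduce to the formula $\theta_{\widehat{v'}} = s_\alpha\theta_{\hv}s_\alpha$ for roots, and upgrade to root vectors via $\theta_{\widehat{v'}} = \Ad(\dot{s}_\alpha^{-1})\circ\theta_{\hv}\circ\Ad(\dot{s}_\alpha)$ applied to $\Ad(\dot{s}_\alpha^{-1})(x_\beta)$. The computation is correct and the discussion of why the edge root $\alpha$ itself is never imaginary is a nice touch.

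However, your justification for the converse direction is wrong in two ways, even though the conclusion is unaffected. First, $m(s_\alpha)$ is an idempotent monoid operation, not a group action: $m(s_\alpha)\cdot Q$ is the \emph{open} orbit in $\pi_\alpha^{-1}\pi_\alpha(Q)$, so $m(s_\alpha)\cdot v' = v'$, not $v$. Second, $\alpha$ is \emph{not} complex $\theta$-stable for $v'$: from $\theta_{\widehat{v'}}(\alpha) = s_\alpha\theta_{\hv}s_\alpha(\alpha) = -s_\alpha\theta_{\hv}(\alpha)$, and since $\theta_{\hv}(\alpha)$ is a positive root $\neq\alpha$, the reflection $s_\alpha$ keeps it positive, so $\theta_{\widehat{v'}}(\alpha)$ is a negative root; hence $\alpha$ is complex but not $\theta$-stable for $v'$. (This is exactly why $m(s_\alpha)$ increases dimension only once.) Fortunately you do not need any role interchange: the identity $\theta_{\widehat{v'}}(e_\beta) = \Ad(\dot{s}_\alpha^{-1})\theta_{\hv}(c_\beta e_{s_\alpha\beta})$ is an equality, so $\theta_{\widehat{v'}}(e_\beta) = -e_\beta$ holds if and only if $\theta_{\hv}(e_{s_\alpha\beta}) = -e_{s_\alpha\beta}$, giving the equivalence in one stroke, which is what the paper does.
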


\begin{proof}
Let $v^{\prime}=m(s_{\alpha})\cdot v$.  Then by Proposition \ref{p:cplx} (2),
$\theta_{\widehat{v^{\prime}}}(s_{\alpha}(\beta)) = s_{\alpha}(\theta_{\hat{v}}(\beta))$.
Hence, $\beta$ is imaginary for $v$ if and only if $s_{\alpha}(\beta)$
is imaginary for $v^{\prime}$.  To prove the non-compactness assertion,
it suffices to apply Proposition \ref{p:cplx} (2) to a root vector
$\Ad(\dot{s_{\alpha}}^{-1})(x_{\beta})$, where $x_{\beta}$ is a nonzero root
vector in $\fg_{\beta}$.
\end{proof}

\begin{exam}\label{ex:qijthetaroots}
We show how this theory helps describe the $K$-orbits
 $Q_{i,j}$ in the case when $G=GL(n,\C)$ and $K=GL(n-1,\C)\times G(1,\C)$.
We let $v_{i,i+1} \in V$ parametrize the orbit $Q_{i,i+1}$.  By Equation \eqref{eq:monoidclosed} and Propositions \ref{prop:closed} and \ref{p:noncpt} (1),  
the Springer invariant $\phi(v_{i,i+1})=(i \ i+1)=s_{\alpha_{i}}$, and using
also Example \ref{ex:closed}, $v_{i,i+1}$ has representative $\widehat{v_{i,i+1}}=(n\, n-1\dots i) u_{\alpha_{i}}$, where $u_{\alpha_{i}}$ is the Cayley transform from Equation (\ref{eq:cayley}).  Hence,
$\alpha_i$ is real for $v_{i,i+1}$, while $\alpha_{i-1}$ and $\alpha_{i+1}$
are the only $\theta$-stable complex simple roots (as before, in case
$i=1$ or $n-1$, only one of these complex roots exists).
Further, the imaginary roots for $v_{i,i+1}$ are the roots 
$\epsilon_{j} - \epsilon_{k}$
with $j, k \not\in \{ i, i+1 \}$ and have root vectors $E_{jk}$.  
Then by Proposition \ref{p:noncpt} (2),
$\theta_{\widehat{v_{i,i+1}}}(E_{jk})=\Ad(\dot{s_{\alpha_i}}^{-1})\theta_{\widehat{v_i}}(E_{jk})$, where $\dot{s_{\alpha_{i}}}$ is the representative for $s_{\alpha_{i}}\in W$ given in Equation (\ref{eq:dotsalpha}).  But by Example \ref{ex:closed}, $\theta_{\widehat{v_{i}}}(E_{jk})=E_{jk}$, so the roots $\epsilon_{j} - \epsilon_{k}$ are compact.
Hence, there are no non-compact imaginary roots for $Q_{i,i+1}$.

We now consider all orbits $Q_{i,j}$ with $i < j$.  We let $v_{i,j}\in V$ denote the corresponding parameter, and we let
$s_i = (i \ i+1)$ with representative $\dot{s_i}$ given by the corresponding permutation matrix.  

\noindent{\bf Claim:} 
\begin{enumerate}
\item $Q_{i,j}=m(s_{j-1})\cdot\ldots\cdot m(s_{i})\cdot Q_{i}$ and $l(Q_{i,j})=j-i$.

\item $\phi(v_{i,j})$ is the transposition $(i\, j)$, $\theta_{\widehat{v_{i,j}}}=(i\,j)$
on roots, and $Q_{i,j}$ has
representative given by the element $\widehat{v_{i,j}}=(n\, n-1\,\dots\, i)u_{\alpha_{i}}{\dot{s}}_{i+1} \dots {\dot{s}}_{j-1}$.

\item The simple roots $\alpha_{i-1} = \epsilon_{i-1} - \epsilon_{i}$
and $\alpha_j=\epsilon_{j} - \epsilon_{j+1}$ are the only complex $\theta$-stable simple roots for $v_{i,j}$, and there are no non-compact imaginary roots for $v_{i,j}$.

\end{enumerate}
We prove these claims by induction on $j-i$.  Example \ref{ex:noncptimag} and our discussion in the first paragraph proves the claim when $j-i=1$.  It suffices to show that (1)-(3) of the claim for $Q_{i,j}$
imply the claim for $Q_{i,j+1}$.  By Proposition \ref{p:diffroots} and Claims (2) and (3) for $Q_{i,j}$, it follows that $m(s_{j})\cdot Q_{i,j}\neq Q_{i,j}$ and $m(s_{j})\cdot Q_{i,j}$ has
representative $\widehat{v_{i,j+1}}$.  A routine computation with flags then shows that 
$K\widehat{v_{i,j+1}}\fb_{0}=Q_{i,j+1}$.  Hence,
\begin{equation}\label{eq:mi1qij}
m(s_{j})\cdot Q_{i,j}=Q_{i,j+1}.
\end{equation}
Claim (1) for $Q_{i,j+1}$ then follows by induction.  Claim (2) for $Q_{i,j}$ and Proposition \ref{p:cplx} (1) imply that $\phi(v_{i, j+1})$ is the transposition $(i\, j+1)$.  The formula for $\theta_{\widehat{v_{i,j}}}$ in Claim (2) follows
from Proposition \ref{p:cplx} part (2).  Claim (3) now follows by Lemma \ref{l:noncptnextorbit} and an easy computation.  This verifies Claims (1)-(3) for the orbit $Q_{i,j+1}$.  

We remark that a computation similar to the one above
 verifies that 
\begin{equation}\label{eq:miqij}
m(s_{i-1})\cdot Q_{i,j}=Q_{i-1,j}.
\end{equation}


\end{exam}



\begin{exam}\label{ex:allorbitsqiqij}
We retain the notation from the last example.
We assert that every $K$-orbit $Q$ in $\B$ is either of the
form $Q_i$ or $Q_{i,j}$ with $i < j$ and that these orbits are all distinct.  We prove the first assertion
by induction on $l(Q)$.  If $l(Q)=0$, then $Q$ is closed,
so $Q=Q_i$ by Example \ref{ex:closed}.  If $l(Q)=1$, then
by Proposition \ref{prop:chain}, $Q=m(s_i)\cdot Q_c$ for some
closed orbit $Q_c$, so by Example \ref{ex:noncptimag} and
Equations \eqref{eq:monoidclosedminus} and \eqref{eq:monoidclosed},
 it follows that $Q=Q_{i,i+1}$ for some $i$.
If $l(Q)=k > 1$, then Proposition \ref{prop:chain} implies $Q=m(s_{i})\cdot \tilde{Q}$, where $l(\tilde{Q})=k-1$,
so by induction $\tilde{Q} = Q_{j,j+k-1}$ for some $j$, and by Claim (3)
of Example \ref{ex:qijthetaroots}, the simple root $\alpha_{i}$
is either $\alpha_{j-1}$ or $\alpha_{j+k-1}$.   The first assertion
now follows by Equations \eqref{eq:mi1qij} and \eqref{eq:miqij}.  By Example \ref{ex:closed},  the orbits $Q_{i}$ 
are distinct.  By Claim (2) of Example \ref{ex:qijthetaroots}, the Springer 
invariant for $Q_{i,j}$ is $(i\, j)$, so that $Q_{i,j}=Q_{i^{\prime},j^{\prime}}$ if and only if $i=i^{\prime}$ and $j=j^{\prime}$.  We now 
have a complete classification of the $K$-orbits on $\mathcal{B}$.  
\end{exam}

\begin{exam}\label{ex:openorbit} 
We claim that $Q_{1,n}$ is the unique open orbit of $K$ on $\B$,
where we retain notation from the previous two examples.
Indeed, by Claim (1) from Example \ref{ex:qijthetaroots},
$l(Q_{1,n})=n-1=\dim Q_{1,n} -\dim(\B_K)$, so that $\dim Q_{1,n} = n-1+\dim(\B_{K})=\dim(\B)$.  It follows
that $Q_{1,n}$ is open in $\B$.
\end{exam}

\begin{rem}\label{r:selecta}
The last three examples verify the assertions of \cite{Yam}, Section 2 and \cite{MO} for the
case $G=GL(n,\C)$ and $K=GL(n-1,\C) \times GL(1,\C)$.  In particular, they justify the 
statements made in \cite{CEKorbs}, Section 3.1.  Example \ref{ex:qijthetaroots} explains the definition of the element $v$ in Equation (3.3) and the construction of the involution $\theta^{\prime}$ in \cite{CEKorbs}, Section 3.1, which is the critical ingredient in the proof of Theorem \ref{thm:strong} above (see Remark \ref{rem:bigproof}).  
\end{rem}



\bibliographystyle{amsalpha.bst}

\bibliography{bibliography}

\def\cprime{$'$} \def\cprime{$'$} \def\cprime{$'$} \def\cprime{$'$}
  \def\cprime{$'$} \def\cprime{$'$} \def\cprime{$'$} \def\cprime{$'$}
  \def\cprime{$'$}
\providecommand{\bysame}{\leavevmode\hbox to3em{\hrulefill}\thinspace}
\begin{thebibliography}{HMSW87}

\bibitem[Aud08]{Audin}
Mich{\`e}le Audin, \emph{Hamiltonian systems and their integrability}, SMF/AMS
  Texts and Monographs, vol.~15, American Mathematical Society, Providence, RI,
  2008, Translated from the 2001 French original by Anna Pierrehumbert,
  Translation edited by Donald Babbitt.

\bibitem[AvMV04]{AVMVH}
Mark Adler, Pierre van Moerbeke, and Pol Vanhaecke, \emph{Algebraic
  integrability, {P}ainlev\'e geometry and {L}ie algebras}, Ergebnisse der
  Mathematik und ihrer Grenzgebiete. 3. Folge. A Series of Modern Surveys in
  Mathematics [Results in Mathematics and Related Areas. 3rd Series. A Series
  of Modern Surveys in Mathematics], vol.~47, Springer-Verlag, Berlin, 2004.

\bibitem[BH00]{BH}
Michel Brion and Aloysius~G. Helminck, \emph{On orbit closures of symmetric
  subgroups in flag varieties}, Canad. J. Math. \textbf{52} (2000), no.~2,
  265--292.

\bibitem[BK79]{BK}
Walter Borho and Hanspeter Kraft, \emph{\"{U}ber {B}ahnen und deren
  {D}eformationen bei linearen {A}ktionen reduktiver {G}ruppen}, Comment. Math.
  Helv. \textbf{54} (1979), no.~1, 61--104.

\bibitem[BP08]{BP}
Roger Bielawski and Victor Pidstrygach, \emph{Gelfand-{Z}eitlin actions and
  rational maps}, Math. Z. \textbf{260} (2008), no.~4, 779--803.

\bibitem[CE]{CEKorbs}
Mark Colarusso and Sam Evens, \emph{$k$-orbits on the flag variety and strongly
  regular nilpotent matrices}, arXiv:1105.1726v1 [math.RT], submitted.

\bibitem[CE10]{CE}
Mark Colarusso and Sam Evens, \emph{On algebraic integrability of
  {G}elfand-{Z}eitlin fields}, Transform. Groups \textbf{15} (2010), no.~1,
  46--71.

\bibitem[CG97]{CG}
Neil Chriss and Victor Ginzburg, \emph{Representation theory and complex
  geometry}, Birkh\"auser Boston Inc., Boston, MA, 1997.

\bibitem[Col85]{Collingwood}
David~H. Collingwood, \emph{Representations of rank one {L}ie groups}, Research
  Notes in Mathematics, vol. 137, Pitman (Advanced Publishing Program), Boston,
  MA, 1985.

\bibitem[Col07]{Col}
Mark Colarusso, \emph{The {G}elfand-{Z}eitlin algebra and polarizations of
  regular adjoint orbits for classical groups}, Ph.D. thesis, University of
  California, San Diego, 2007.

\bibitem[Col09]{Col2}
Mark Colarusso, \emph{The {G}elfand-{Z}eitlin integrable system and its action
  on generic elements of $\mathfrak{gl}(n)$ and $\mathfrak{so}(n)$}, New
  Developments in Lie Theory and Geometry ({C}ruz {C}hica, {C}\'{o}rdoba,
  {A}rgentina, 2007), Contemp. Math., vol. 491, Amer. Math. Soc., Providence,
  RI, 2009, pp.~255--281.

\bibitem[Col11]{Col1}
Mark Colarusso, \emph{The orbit structure of the {G}elfand-{Z}eitlin group on
  {$n\times n$} matrices}, Pacific J. Math. \textbf{250} (2011), no.~1,
  109--138.

\bibitem[DFO94]{DFO}
Yu.~A. Drozd, V.~M. Futorny, and S.~A. Ovsienko, \emph{Harish-{C}handra
  subalgebras and {G}el\cprime fand-{Z}etlin modules}, Finite-dimensional
  algebras and related topics ({O}ttawa, {ON}, 1992), NATO Adv. Sci. Inst. Ser.
  C Math. Phys. Sci., vol. 424, Kluwer Acad. Publ., Dordrecht, 1994,
  pp.~79--93.

\bibitem[EL07]{EL}
Sam Evens and Jiang-Hua Lu, \emph{Poisson geometry of the {G}rothendieck
  resolution of a complex semisimple group}, Mosc. Math. J. \textbf{7} (2007),
  no.~4, 613--642.

\bibitem[Eti07]{E}
Pavel Etingof, \emph{Calogero-{M}oser systems and representation theory},
  Zurich Lectures in Advanced Mathematics, European Mathematical Society (EMS),
  Z\"urich, 2007.

\bibitem[Fut04]{FO2}
Vyacheslav Futorny, \emph{Harish-{C}handra categories and {K}ostant's theorem},
  Resenhas \textbf{6} (2004), no.~2-3, 177--186.

\bibitem[Fut08]{PFut}
Vyacheslav Futorny, personal communication, 2008.

\bibitem[GC50]{GZ1}
I.~M. Gel{\cprime}fand and M.~L. Cetlin, \emph{Finite-dimensional
  representations of the group of unimodular matrices}, Doklady Akad. Nauk SSSR
  (N.S.) \textbf{71} (1950), 825--828.

\bibitem[GS83]{GS}
V.~Guillemin and S.~Sternberg, \emph{The {G}el\cprime fand-{C}etlin system and
  quantization of the complex flag manifolds}, J. Funct. Anal. \textbf{52}
  (1983), no.~1, 106--128.

\bibitem[HJ85]{HJ}
Roger~A. Horn and Charles~R. Johnson, \emph{Matrix analysis}, Cambridge
  University Press, Cambridge, 1985.

\bibitem[HMSW87]{HMSW}
Henryk Hecht, Dragan Mili{\v{c}}i{\'c}, Wilfried Schmid, and Joseph~A. Wolf,
  \emph{Localization and standard modules for real semisimple {L}ie groups.
  {I}. {T}he duality theorem}, Invent. Math. \textbf{90} (1987), no.~2,
  297--332.

\bibitem[Kob05]{Kobsur}
Toshiyuki Kobayashi, \emph{Restrictions of unitary representations of real
  reductive groups}, Lie theory, Progr. Math., vol. 229, Birkh\"auser Boston,
  Boston, MA, 2005, pp.~139--207.

\bibitem[Kos63]{K}
Bertram Kostant, \emph{Lie group representations on polynomial rings}, Amer. J.
  Math. \textbf{85} (1963), 327--404.

\bibitem[KW06a]{KW1}
Bertram Kostant and Nolan Wallach, \emph{Gelfand-{Z}eitlin theory from the
  perspective of classical mechanics. {I}}, Studies in {L}ie theory, Progr.
  Math., vol. 243, Birkh\"auser Boston, Boston, MA, 2006, pp.~319--364.

\bibitem[KW06b]{KW2}
Bertram Kostant and Nolan Wallach, \emph{Gelfand-{Z}eitlin theory from the
  perspective of classical mechanics. {II}}, The unity of mathematics, Progr.
  Math., vol. 244, Birkh\"auser Boston, Boston, MA, 2006, pp.~387--420.

\bibitem[M{\=O}90]{MO}
Toshihiko Matsuki and Toshio {\=O}shima, \emph{Embeddings of discrete series
  into principal series}, The orbit method in representation theory
  ({C}openhagen, 1988), Progr. Math., vol.~82, Birkh\"auser Boston, Boston, MA,
  1990, pp.~147--175.

\bibitem[Mum99]{M}
David Mumford, \emph{The red book of varieties and schemes}, expanded ed.,
  Lecture Notes in Mathematics, vol. 1358, Springer-Verlag, Berlin, 1999,
  Includes the Michigan lectures (1974) on curves and their Jacobians, With
  contributions by Enrico Arbarello.

\bibitem[NT04]{NT}
Ryszard Nest and Boris Tsygan, \emph{Remarks on modules over deformation
  quantization algebras}, Mosc. Math. J. \textbf{4} (2004), no.~4, 911--940,
  982.

\bibitem[Par98]{Parbook}
Beresford~N. Parlett, \emph{The symmetric eigenvalue problem}, Classics in
  Applied Mathematics, vol.~20, Society for Industrial and Applied Mathematics
  (SIAM), Philadelphia, PA, 1998, Corrected reprint of the 1980 original.

\bibitem[PS08]{PS}
Beresford Parlett and Gilbert Strang, \emph{Matrices with prescribed {R}itz
  values}, Linear Algebra Appl. \textbf{428} (2008), no.~7, 1725--1739.

\bibitem[Ric82]{Rich}
R.~W. Richardson, \emph{Orbits, invariants, and representations associated to
  involutions of reductive groups}, Invent. Math. \textbf{66} (1982), no.~2,
  287--312.

\bibitem[RS90]{RS}
R.~W. Richardson and T.~A. Springer, \emph{The {B}ruhat order on symmetric
  varieties}, Geom. Dedicata \textbf{35} (1990), no.~1-3, 389--436.

\bibitem[RS93]{RSexp}
R.~W. Richardson and T.~A. Springer, \emph{Combinatorics and geometry of
  {$K$}-orbits on the flag manifold}, Linear algebraic groups and their
  representations ({L}os {A}ngeles, {CA}, 1992), Contemp. Math., vol. 153,
  Amer. Math. Soc., Providence, RI, 1993, pp.~109--142.

\bibitem[SP09]{PShom}
Noam Shomron and Beresford~N. Parlett, \emph{Linear algebra meets {L}ie
  algebra: the {K}ostant-{W}allach theory}, Linear Algebra Appl. \textbf{431}
  (2009), no.~10, 1745--1767.

\bibitem[Spr85]{Sp}
T.~A. Springer, \emph{Some results on algebraic groups with involutions},
  Algebraic groups and related topics ({K}yoto/{N}agoya, 1983), Adv. Stud. Pure
  Math., vol.~6, North-Holland, Amsterdam, 1985, pp.~525--543.

\bibitem[Ste68]{St}
Robert Steinberg, \emph{Endomorphisms of linear algebraic groups}, Memoirs of
  the American Mathematical Society, No. 80, American Mathematical Society,
  Providence, R.I., 1968.

\bibitem[Vai94]{Va}
Izu Vaisman, \emph{Lectures on the geometry of {P}oisson manifolds}, Progress
  in Mathematics, vol. 118, Birkh\"auser Verlag, Basel, 1994.

\bibitem[Vog83]{Vg}
David~A. Vogan, \emph{Irreducible characters of semisimple {L}ie groups. {III}.
  {P}roof of {K}azhdan-{L}usztig conjecture in the integral case}, Invent.
  Math. \textbf{71} (1983), no.~2, 381--417.

\bibitem[Woo92]{Woodhouse}
N.~M.~J. Woodhouse, \emph{Geometric quantization}, second ed., Oxford
  Mathematical Monographs, The Clarendon Press Oxford University Press, New
  York, 1992, Oxford Science Publications.

\bibitem[Yam97]{Yam}
Atsuko Yamamoto, \emph{Orbits in the flag variety and images of the moment map
  for classical groups. {I}}, Represent. Theory \textbf{1} (1997), 329--404
  (electronic).

\end{thebibliography}








  

\end{document}